\newtheorem{Theorem}{Theorem}[section]
\newtheorem{Corollary}[Theorem]{Corollary}
\newtheorem{Lemma}[Theorem]{Lemma}
\theoremstyle{definition}
\newtheorem{Definition}{Definition}[section]
\newtheorem{Remark}{Remark}[section]
\definecolor{Blue}{rgb}{0,0,1}
\definecolor{DarkGreen}{rgb}{0,0.6,0}
\definecolor{Red}{rgb}{1,0,0}
\definecolor{Orange}{rgb}{1,0.5,0}
\long\def\delete#1{}
\newcommand{\be}{\begin{equation}}
\newcommand{\ee}{\end{equation}}
\newcommand{\bea}{\begin{eqnarray}}
\newcommand{\eea}{\end{eqnarray}}
\newcommand{\bean}{\begin{eqnarray*}}
\newcommand{\eean}{\end{eqnarray*}}
\def\non{\nonumber}
\def\diam{{\rm diam}}
\def\span{{\rm span}}
\def\rn{{\rm rn}}
\def\d{\delta}
\def\ve{\varepsilon}
\def\mt{\mathcal}
\def\({\left(}
\def\){\right)}
\def\[{\left[}
\def\]{\right]}
\begin{document}

\title{Radio labelling of two-branch trees}

\author[a]{Devsi Bantva\thanks{E-mail: \texttt{devsi.bantva@gmail.com}}}
\author[b]{Samir Vaidya\thanks{E-mail: \texttt{samirkvaidya@yahoo.co.in}}}
\author[c]{Sanming Zhou\thanks{E-mail: \texttt{sanming@unimelb.edu.au}}}
\affil[a]{{\small Department of Mathematics, Lukhdhirji Engineering College, Morvi 363 642, Gujarat, India}}
\affil[b]{{\small Department of Mathematics, Saurashtra University, Rajkot 360 005, Gujarat, India}}
\affil[c]{{\small School of Mathematics and Statistics, The University of Melbourne, Parkville, VIC 3010, Australia}}


\date{}
\openup 0.5\jot
\maketitle

\begin{abstract}
A radio labelling of a graph $G$ is a mapping $f : V(G) \rightarrow \{0, 1, 2,\ldots\}$ such that $|f(u)-f(v)|\geq \diam(G) + 1 - d(u,v)$ for every pair of distinct vertices $u,v$ of $G$, where $\diam(G)$ is the diameter of $G$ and $d(u,v)$ is the distance between $u$ and $v$ in $G$. The radio number $\rn(G)$ of $G$ is the smallest integer $k$ such that $G$ admits a radio labelling $f$ with $\max\{f(v):v \in V(G)\} = k$. The weight of a tree $T$ from a vertex $v \in V(T)$ is the sum of the distances in $T$ from $v$ to all other vertices, and a vertex of $T$ achieving the minimum weight is called a weight center of $T$. It is known that any tree has one or two weight centers. A tree is called a two-branch tree if the removal of all its weight centers results in a forest with exactly two components. In this paper we obtain a sharp lower bound for the radio number of two-branch trees which improves a known lower bound for general trees. We also give a necessary and sufficient condition for this improved lower bound to be achieved. Using these results, we determine the radio number of two families of level-wise regular two-branch trees.

\smallskip
\emph{Keywords}: Graph colouring; radio labelling; radio number; trees; level-wise regular trees

\smallskip
\emph{Mathematics Subject Classification (2020)}: 05C78, 05C15
\end{abstract}

\section{Introduction}\label{intro}

The channel assignment problem is the problem of assigning a channel to each transmitter in a radio network such that a set of constraints, usually determined by the geographic proximity of the transmitters, are satisfied and the span is minimized. In \cite{Hale}, Hale modelled this problem as an optimal labelling problem for graphs, in which vertices represent transmitters and two vertices are adjacent if the corresponding transmitters are close to each other. Initially, only two levels of interference were considered, leading to the following concept \cite{Griggs}: An \emph{$L(2,1)$-labelling} of a graph $G$ is a function $f : V(G) \rightarrow \{0, 1, 2, \ldots\}$ such that $|f(u)-f(v)| \geq 2$ if $d(u,v) = 1$ and $|f(u)-f(v)| \geq 1$ if $d(u,v) = 2$, where $V(G)$ is the vertex set of $G$ and $d(u,v)$ is the distance between $u$ and $v$ in $G$. The span of $f$ is defined as $\max\{|f(u)-f(v)|: u, v \in V(G)\}$, and the \emph{$\lambda$-number} (or the $\lambda_{2,1}$-number) of $G$ is the minimum span of an $L(2,1)$-labelling of $G$. The $L(2,1)$-labelling problem has been studied extensively in the past more than two decades, as one can find in the survey articles \cite{Calamoneri, Yeh1}.

In \cite{Chartrand1,Chartrand2}, Chartrand \emph{et al.} introduced the following radio labelling problem by considering all levels of interference up to the diameter of the graph, where the diameter of a graph $G$ is defined as $\diam(G) = \max\{d(u,v) : u, v \in V(G)\}$.

\begin{Definition}
A \emph{radio labelling} of a graph $G$ is a mapping $f: V(G) \rightarrow \{0, 1, 2, \ldots\}$ such that for every pair of distinct vertices $u, v$ of $G$,
$$
|f(u)-f(v)| \geq \diam(G) + 1 - d(u,v).
$$
The integer $f(u)$ is called the \emph{label} of $u$ under $f$, and the \emph{span} of $f$ is defined as
$$
\span(f) = \max\{|f(u)-f(v)|: u, v \in V(G)\}.
$$
The \emph{radio number} of $G$ is defined as
$$
\rn(G) = \min_{f} \span(f)
$$
with minimum taken over all radio labellings $f$ of $G$. A radio labelling $f$ of $G$ is called \emph{optimal} if $\span(f) = \rn(G)$.
\end{Definition}

Without loss of generality we may always assume that any radio labelling assigns $0$ to some vertex. With this convention the span of a radio labelling is equal to the maximum label used. Observe that any radio labelling should assign different labels to distinct vertices. So a radio labelling $f$ of a graph $G$ with order $p$ induces the linear order
\be
\label{eqn:ord}
OV_{f}(G): u_{0}, u_{1}, \ldots, u_{p-1}
\ee
of the vertices of $G$ such that
\be
\label{eq:spf}
0 = f(u_{0}) < f(u_{1}) < \ldots < f(u_{p-1}) = \span(f).
\ee

In general, it is challenging to determine the exact value of the radio number of a graph. As such much attention has been paid to special families of graphs such as trees. It turns out that even for some special families of trees the problem can be difficult. For example, the radio number of paths was determined by Liu and Zhu in \cite{Liu}, and even in this innocent-looking case the problem is nontrivial. In \cite{Benson}, Benson \emph{et al.} determined the radio number of all graphs of order $n \ge 2$ and diameter $n-2$. In \cite{Bantva3}, Bantva determined the radio number of some trees obtained by applying a graph operation on given trees. In \cite[Theorem 3]{Daphne1}, Liu gave a lower bound for the radio number of trees and a necessary and sufficient condition for this bound to be achieved. She also presented a class of spiders achieving this lower bound, where a spider is a tree with at most one vertex of degree greater than two. In \cite[Theorems 1 and 2]{Li}, Li \emph{et al.} determined the radio number of complete $m$-ary trees of height $k$, for $m \geq 2$ and $k \geq 1$. In \cite{Tuza}, Hal\'{a}sz and Tuza gave a lower bound for the radio number of level-wise regular trees and proved further that this bound is tight under some additional conditions, generalizing a result in \cite{Li}. In \cite[Theorem 3.2]{Bantva2}, the authors of the present paper gave a necessary and sufficient condition for the lower bound in \cite[Theorem 3]{Daphne1} to be tight along with two sufficient conditions for achieving this lower bound. Using these results, we also determined in \cite{Bantva2} the radio number of three families of trees.

Observe that in many cases the lower bound in \cite[Theorem 3]{Daphne1} can be far from being optimal, though it can be achieved in some other cases. This motivated us to study when this bound can be improved. In this paper we investigate this problem with a focus on a large family of trees called two-branch trees. (A tree is called a two-branch tree if the deletion of its weight centers results in a forest with exactly two components, where a vertex in a tree is called a weight center if it has the least total distance to other vertices.) We first obtain a necessary condition for a two-branch tree to attain the lower bound in \cite[Theorem 3]{Daphne1} (see Theorem \ref{thm:nlb}). We then give an improved lower bound for two-branch trees and obtain further a necessary and sufficient condition for our new bound to be achieved (see Theorem \ref{thm:lower}). We show that our improved lower bound is sharp and can be significantly larger than the one in \cite[Theorem 3]{Daphne1} for some two-branch trees. This is achieved by constructing a family of two-branch trees for which the gap between the two bounds can be arbitrarily large (see Theorem \ref{thm:cat}). Our improved lower bound enables us to determine the radio number of two large families of level-wise regular two-branch trees (see Theorems \ref{thm:level} and \ref{thm:ds}), and in a special case we recover the above-mentioned result of Li \emph{et al.} \cite[Theorem 1]{Li} on the radio number of complete binary trees (see Corollary \ref{coro:Li}). We also notice that the above-mentioned result of Liu and Zhu \cite{Liu} for paths can be obtained by using our lower bound and related condition ensuring its tightness.

The rest of this paper is structured as follows. In Section \ref{prel}, we will introduce terminology and notation and recall a few results from \cite{Bantva2} and \cite{Daphne1}. In Section \ref{main}, we will prove Theorems \ref{thm:nlb}, \ref{thm:lower} and \ref{thm:cat}. As we will see in this section, the proof of Theorem \ref{thm:lower} requires a series of technical lemmas, and the proof of Theorem \ref{thm:cat} is reduced to showing the existence of a radio labelling achieving the lower bound in Theorem \ref{thm:lower}. Using the lower bound in Theorem \ref{thm:lower}, the proofs of Theorems \ref{thm:level} and \ref{thm:ds} will be given in Section \ref{level}. During the preparation of this paper (a project that we started in 2018) we became aware of the recent paper \cite{Daphne2}. By coincidence, Theorems \ref{thm:nlb} and \ref{thm:lower} partially answer a question in \cite{Daphne2}, and Theorems \ref{thm:level} and \ref{thm:ds} are closely related to \cite[Theorem 12]{Daphne2}. There are also overlaps between Theorem \ref{thm:lower} and \cite[Theorems 10 and 15]{Daphne2}, but neither of them implies the other. In Section \ref{sec:rem}, we conclude the paper with five remarks to explain these relations between our results and questions and results in \cite{Daphne2}.

\section{Preliminaries}
\label{prel}

We follow \cite{West} for graph-theoretic terminology and notation. Let $T$ be a tree. As usual, for $S \subseteq V(T)$, let $N(S) = \cup_{v \in S}N(v)$, where $N(v)$ is the neighbourhood of $v$ in $T$. A vertex $v \in V(T)$ is a \emph{leaf} of $T$ if it has degree one and is an \emph{internal vertex} otherwise. In \cite{Daphne1}, the \emph{weight} of $T$ from $v \in V(T)$ is defined as
$$
w_{T}(v) = \sum_{u \in V(T)}d(u,v)
$$
and the weight of $T$ is defined as
$$
w(T) = \min\{w_{T}(v) : v \in V(T)\}.
$$
A vertex $v \in V(T)$ is a \emph{weight center} of $T$ if $w_{T}(v) = w(T)$ \cite{Daphne1}. Denote by $W(T)$ the set of weight centers of $T$.
In \cite{Daphne1}, it was proved that $W(T)$ consists of either one vertex or two adjacent vertices, and moreover the latter occurs, say, $W(T) = \{w, w'\}$, if and only if $ww'$ is an edge of $T$ and $T-ww'$ consists of two components with the same number of vertices. In particular, we have $|W(T)| \in \{1, 2\}$.
Define
\begin{equation*}
\ve(T) = 2 - |W(T)| \in \{0, 1\}.
\end{equation*}

A vertex $u$ is called an \emph{ancestor} of a vertex $v$, or $v$ is a \emph{descendent} of $u$, if $u$ is on the unique path joining a weight center and $v$ \cite{Bantva2, Daphne1}. Let $u \in V(T) \setminus W(T)$ be a vertex adjacent to a weight center $x$. The subtree of $T$ induced by $u$ and all its descendants is called the \emph{branch} of $T$ at $u$. So the branches of $T$ are exactly the components of $T - W(T)$. Two vertices $u, v$ of $T$ are said to be in \emph{different branches} if the path between them traverses only one weight center and in \emph{opposite branches} if the path joining them traverses two weight centers. We view $T$ as rooted at its weight center $W(T)$: if $W(T) = \{w\}$, then $T$ is rooted at $w$; if $W(T) = \{w,w'\}$, then $T$ is rooted at $w$ and $w'$ in the sense that both $w$ and $w'$ are at level 0. In either case, for each $u \in V(T)$, define
$$
L(u) = \mbox{min}\{d(u,x) : x \in W(T)\}
$$
to indicate the \emph{level} of $u$ in $T$, and call
$$
L(T) = \sum_{u \in V(T)} L(u)
$$
the \emph{total level} of $T$.

For any $u, v \in V(T)$, define
\begin{equation}
\label{eq:phi}
\phi(u,v) = \mbox{max}\{L(x) : x \mbox{ is a common ancestor of $u$ and $v$}\}
\end{equation}
and
\begin{equation}
\label{eq:delta}
\delta(u,v) =
\begin{cases}
1, & \mbox{if $|W(T)| = 2$ and the $(u, v)$-path in $T$ contains both weight centers} \\
0, & \mbox{otherwise}.
\end{cases}
\end{equation}

\begin{Lemma}[{\cite[Lemma 2.1]{Bantva2}}]\label{lem4} Let $T$ be a tree with diameter $d \geq 2$. Then for any $u, v \in V(T)$ the following hold:
\begin{enumerate}[\rm (a)]
  \item $\phi(u,v) \geq 0$;
  \item $\phi(u,v) = 0$ if and only if $u$ and $v$ are in different or opposite branches;
  \item $\delta(u,v) = 1$ if and only if $T$ has two weight centers and $u$ and $v$ are in opposite branches;
  \item the distance $d(u,v)$ in $T$ between $u$ and $v$ can be expressed as
  \begin{equation}\label{eqn:dist}
  d(u,v) = L(u) + L(v) - 2\phi(u,v)+\delta(u,v).
  \end{equation}
\end{enumerate}
\end{Lemma}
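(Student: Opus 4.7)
The plan is to verify parts (a)--(d) by unwinding the definitions of ancestor, level, $\phi$ and $\delta$; only (d) requires a modest case analysis. First, I would observe that every weight center is a common ancestor of every pair $u,v \in V(T)$: if $W(T)=\{w\}$ this is immediate from the definition of ancestor; if $W(T)=\{w,w'\}$, the edge $ww'$ separates $T$ into two components of equal size, so for any vertex $z$ lying in (say) the $w$-component, the path from $w'$ to $z$ traverses $w$, showing that both $w$ and $w'$ are ancestors of $z$. Since $L(w)=L(w')=0$, this yields (a). Part (c) is simply the definition of $\delta$ restated.

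For (b), by (a) it suffices to show that no ancestor of positive level is common to $u,v$ when they lie in different or opposite branches, and that one such common ancestor exists when they lie in the same branch. An ancestor of $u$ with positive level lies strictly inside the branch containing $u$; if $v$ lies in a different or opposite branch then the path from any weight center to $v$ passes only through the branch containing $v$ (and possibly through $W(T)$), hence avoids that ancestor, so the ancestor is not common. Conversely, if $u,v$ lie in the same branch, then the root of that branch (the neighbour of $W(T)$ at level $1$) is a common ancestor with $L\geq 1$, giving $\phi(u,v)\geq 1 > 0$.

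For (d), let $P$ be the unique $u$-$v$ path. If $\phi(u,v)>0$, then by (b), $u$ and $v$ lie in the same branch and the lowest common ancestor $x$ of $u$ and $v$ in that branch satisfies $L(x)=\phi(u,v)$; then $P$ ascends from $u$ to $x$ in $L(u)-L(x)$ steps and descends to $v$ in $L(v)-L(x)$ steps, while $\delta(u,v)=0$, yielding the identity. If $\phi(u,v)=0$, then by (b), $u,v$ are in different or opposite branches; when $|W(T)|=1$, or when $|W(T)|=2$ and $u,v$ lie in different branches on the same side of $W(T)$, the path $P$ crosses exactly one weight center, so $d(u,v)=L(u)+L(v)$ and $\delta(u,v)=0$, as required. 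Finally, when $|W(T)|=2$ and $u,v$ lie in opposite branches, $P$ also traverses the edge $ww'$, giving $d(u,v)=L(u)+1+L(v)$ with $\delta(u,v)=1$, and the identity again holds. I do not expect any serious obstacle here; the only real care is the two-weight-center case, which I would handle by rooting $T$ simultaneously at $w$ and $w'$ (both at level $0$) as the paper prescribes, and by checking the boundary cases $u\in W(T)$ or $v\in W(T)$ directly against the formula.
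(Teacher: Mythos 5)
The paper does not prove this lemma at all: it is imported verbatim from \cite[Lemma 2.1]{Bantva2}, so there is no in-paper argument to compare against. Your verification is correct and is the natural one: (a) follows since each weight center lies on the path from itself to any vertex and hence is a common ancestor of level $0$; (c) is the definition of $\delta$ combined with the paper's path-based definition of ``opposite branches''; (b) and the three cases of (d) (deepest common ancestor inside a common branch, path through one weight center, path through both weight centers) all check out, including the boundary cases where $u$ or $v$ is a weight center, which you rightly flag as the only place requiring care.
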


\begin{Lemma}[{\cite[Theorem 3]{Daphne1}; see also \cite[Lemma 3.1]{Bantva2}}]
\label{lem:ddb}
Let $T$ be a tree with order $p$ and diameter $d \geq 2$. Then
\be
\label{eqn:lb}
\rn(T) \geq (p-1)(d+\ve(T))-2L(T)+\ve(T).
\ee
\end{Lemma}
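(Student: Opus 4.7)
The plan is to telescope the radio inequality along the order induced by $f$, convert each consecutive distance into level terms via Lemma \ref{lem4}(d), and then split into two cases according to $\ve(T)$. Let $f$ be any radio labelling of $T$ and let $u_0, u_1, \ldots, u_{p-1}$ be the induced linear order from \eqref{eqn:ord}, so that $0=f(u_0)<f(u_1)<\cdots<f(u_{p-1})=\span(f)$. Summing the radio inequality $f(u_{i+1})-f(u_i)\ge d+1-d(u_i,u_{i+1})$ over $i=0,1,\ldots,p-2$ gives
\[
\span(f)\,\ge\,(p-1)(d+1)-\sum_{i=0}^{p-2}d(u_i,u_{i+1}),
\]
so the whole task reduces to producing a sharp upper bound on $\sum_{i=0}^{p-2}d(u_i,u_{i+1})$.

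For this I would invoke Lemma \ref{lem4}(a) and (d) to write $d(u_i,u_{i+1})\le L(u_i)+L(u_{i+1})+\delta(u_i,u_{i+1})$, discarding the nonpositive $-2\phi(u_i,u_{i+1})$ term. Summing over $i$ and applying the telescoping identity $\sum_{i=0}^{p-2}(L(u_i)+L(u_{i+1}))=2L(T)-L(u_0)-L(u_{p-1})$ yields
\[
\span(f)\,\ge\,(p-1)(d+1)-2L(T)+L(u_0)+L(u_{p-1})-\sum_{i=0}^{p-2}\delta(u_i,u_{i+1}).
\]

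Finally I would split on $\ve(T)$. If $\ve(T)=0$ (so $|W(T)|=2$), I would use the trivial bound $\sum_{i}\delta(u_i,u_{i+1})\le p-1$ together with $L(u_0)+L(u_{p-1})\ge 0$ to obtain $\span(f)\ge (p-1)d-2L(T)$, which matches \eqref{eqn:lb} in this case. If $\ve(T)=1$ (so $|W(T)|=1$), then Lemma \ref{lem4}(c) forces $\delta\equiv 0$, so the inequality becomes $\span(f)\ge(p-1)(d+1)-2L(T)+L(u_0)+L(u_{p-1})$; since the unique weight center is the only vertex at level $0$ and $d\ge 2$ implies $p\ge 3$, the distinct endpoints $u_0,u_{p-1}$ cannot both be that weight center, giving $L(u_0)+L(u_{p-1})\ge 1$ and hence \eqref{eqn:lb} again. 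The main obstacle is purely book-keeping: one has to package the $\delta$ contribution and the boundary-level correction so that both cases collapse into the single formula $(p-1)(d+\ve(T))-2L(T)+\ve(T)$; beyond Lemma \ref{lem4} no further idea is required.
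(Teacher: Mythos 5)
Your proposal is correct and follows essentially the same route the paper itself uses: the identity \eqref{eqn:sumup} together with the bounds \eqref{eq:sigf12} and $L(u_0)+L(u_{p-1})\ge \ve(T)$ is exactly your telescoping of the radio condition combined with Lemma \ref{lem4}(d), the sign of $-2\phi$, and the two cases for $\delta$. No gaps; the case analysis on $\ve(T)$ and the observation that only the unique weight center sits at level $0$ are both sound.
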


\begin{Theorem}[{\cite[Theorem 3.2]{Bantva2}}]
\label{thm:ddb}
Let $T$ be a tree with order $p$ and diameter $d \ge 2$. Then
$$
\rn(T) = (p-1)(d+\ve(T)) - 2 L(T) + \ve(T)
$$
holds if and only if there exists a linear order $u_0, u_1, \ldots, u_{p-1}$ of the vertices of $T$ such that
\begin{enumerate}[\rm (a)]
\item $u_0 = w$ and $u_{p-1} \in N(w)$ when $W(T) = \{w\}$, and $\{u_0, u_{p-1}\} = \{w, w'\}$ when $W(T) = \{w, w'\}$;
\item the distance $d(u_{i}, u_{j})$ between $u_i$ and $u_j$ in $T$ satisfies
\be
\label{eq:dij}
d(u_{i}, u_{j}) \ge \sum_{t = i}^{j-1} \{L(u_t)+L(u_{t+1}) - (d+\ve(T))\} + (d+1),\;\, 0 \le i < j \le p-1.
\ee
\end{enumerate}
Moreover, under this condition the mapping $f$ defined by
\be
\label{f0}
f(u_{0}) = 0
\ee
\be\label{f1}
f(u_{i+1}) = f(u_{i}) - (L(u_{i}) + L(u_{i+1})) + (d + \ve(T)),\;\, 0 \leq i \leq p-2
\ee
is an optimal radio labelling of $T$.
\end{Theorem}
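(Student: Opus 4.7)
The plan is to prove the two directions of the biconditional separately, with the ``if'' direction producing the explicit labelling $f$ that also settles the ``moreover'' clause. Both directions are handled uniformly in the parameter $\ve(T) \in \{0,1\}$, so that the one-centre and two-centre cases stay parallel.

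For the ``if'' direction, suppose a linear order $u_0, \ldots, u_{p-1}$ satisfying (a) and (b) is given, and define $f$ by (\ref{f0})--(\ref{f1}). A telescoping sum yields
\begin{align*}
\span(f) = f(u_{p-1}) = (p-1)(d+\ve(T)) - 2L(T) + L(u_0) + L(u_{p-1}),
\end{align*}
and condition (a) forces $L(u_0)+L(u_{p-1}) = \ve(T)$ in both cases, pinning the span to the desired value. The radio inequality for consecutive pairs $u_i, u_{i+1}$ follows from (b) specialised to $j=i+1$, and for a general pair $u_i, u_j$ with $i<j$ it follows by telescoping $f(u_j)-f(u_i)$ over the consecutive gaps and invoking (b) at $(i,j)$. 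Comparing the computed span with Lemma \ref{lem:ddb} shows that $f$ is optimal, which is the ``moreover'' claim.

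For the ``only if'' direction, take an optimal radio labelling $f$ of $T$ and let $u_0, \ldots, u_{p-1}$ be the order induced by (\ref{eqn:ord}). The strategy is to retrace the lower-bound derivation
\begin{align*}
\rn(T) = \sum_{i=0}^{p-2}\bigl(f(u_{i+1})-f(u_i)\bigr) &\ge \sum_{i=0}^{p-2}\bigl(d+1-d(u_i, u_{i+1})\bigr) \\
&\ge (p-1)(d+\ve(T)) - 2L(T) + \ve(T),
\end{align*}
where the second step uses the decomposition $d(u,v) = L(u)+L(v)-2\phi(u,v)+\delta(u,v)$ from Lemma \ref{lem4}(d) together with the pointwise bounds $\phi(u_i, u_{i+1}) \ge 0$, $\delta(u_i, u_{i+1}) \le 1-\ve(T)$, and $L(u_0)+L(u_{p-1}) \ge \ve(T)$. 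Tightness throughout this chain is equivalent to the simultaneous conditions (i) $f(u_{i+1})-f(u_i) = d+1-d(u_i, u_{i+1})$ for every $i$, (ii) $\phi(u_i, u_{i+1})=0$ for every $i$, (iii) $\delta(u_i, u_{i+1}) = 1-\ve(T)$ for every $i$, and (iv) $L(u_0)+L(u_{p-1}) = \ve(T)$. Condition (iv) yields (a): when $W(T)=\{w,w'\}$ both endpoints are at level $0$, forcing $\{u_0, u_{p-1}\}=\{w,w'\}$; when $W(T)=\{w\}$ exactly one of $L(u_0), L(u_{p-1})$ equals $0$ and the other $1$, and after possibly replacing $f$ by the reverse labelling $\rn(T)-f$ (itself optimal) I may assume $u_0=w$ and $u_{p-1}\in N(w)$. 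For (b), summing the equalities in (i) across $t=i,\ldots,j-1$ and substituting $d(u_t, u_{t+1}) = L(u_t)+L(u_{t+1}) + (1-\ve(T))$ from (ii) and (iii) gives a closed form for $f(u_j)-f(u_i)$; combining with the general radio inequality $f(u_j)-f(u_i) \ge d+1-d(u_i, u_j)$ produces (b) after rearrangement.

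The main obstacle lies in the ``only if'' direction, specifically the bookkeeping that separates the lower-bound slack into the four independent contributions (i)--(iv) and then reads off the correct structural statement (a) in the form required by the theorem. The interplay between $\phi$ and $\delta$, governed by parts (b) and (c) of Lemma \ref{lem4} on the distinction between different and opposite branches, and the symmetrisation step $f\mapsto \rn(T)-f$ needed in the one-centre case are the delicate ingredients; once these are secured, condition (b) falls out algebraically.
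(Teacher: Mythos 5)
Your proposal is correct and follows essentially the same route as the source: the paper itself quotes this theorem from \cite[Theorem 3.2]{Bantva2} without reproving it, but the decomposition $\span(f) = (p-1)(d+1) - 2L(T) + L(u_0)+L(u_{p-1}) + \sigma(f)$ with $\sigma(f)=\sum_t(J_f(u_t,u_{t+1})+2\phi(u_t,u_{t+1})-\delta(u_t,u_{t+1}))$ set up in Section \ref{prel} is exactly the bookkeeping you use, and your tightness analysis (i)--(iv), the reversal $f\mapsto\rn(T)-f$ in the one-centre case, and the telescoping construction of $f$ in the converse direction all check out.
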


Recall from \eqref{eqn:ord} that, given a tree $T$ and a radio labelling $f$ of $T$, $OV_{f}(T): u_{0},u_{1}, \ldots, u_{p-1}$ denotes the linear order of the vertices of $T$ induced by $f$. Define
\be
\label{def:jvp}
J_{f}(u_{i}, u_{i+1}) = (f(u_{i+1}) - f(u_{i})) + d(u_{i}, u_{i+1}) - (d + 1),\ 0 \leq i \leq p-2.
\ee
Since $f$ is a radio labelling, we have $J_{f}(u_{i},u_{i+1}) \geq 0$ for each $i$. In general, for a segment $U = \{u_i, \ldots, u_j\}$ of $OV_{f}(T)$, where $0 \le i < j \le p-1$, we define
\bea
J_{f}(u_i, \ldots, u_j)
& = & \sum_{t = i}^{j-1} J_{f}(u_{t},u_{t+1}) \non \\
& = & (f(u_{j}) - f(u_{i})) + \sum_{t = i}^{j-1} \{d(u_{t}, u_{t+1}) - (d + 1)\}. \label{eq:jij}
\eea
Write $J_{f}(U) =  J_{f}(u_i, \ldots, u_j)$ for short. Define
\begin{equation}
\label{eq:jfT}
J_{f}(T) = J_{f}(u_{0}, \ldots, u_{p-1}).
\end{equation}
Then $J_{f}(T) = \span(f) + \sum_{t = 0}^{p-2} \{d(u_{t}, u_{t+1}) - (d + 1)\}$ by  \eqref{eq:spf} and \eqref{eq:jij}. In view of \eqref{eqn:dist}, this can be rewritten as
\bea
\span(f) & = & J_{f}(T) + \sum_{t=0}^{p-2} \{-(L(u_{t}) + L(u_{t+1})) + 2 \phi(u_{t},u_{t+1}) - \delta(u_{t},u_{t+1}) + (d + 1)\}\non \\
& = & (p-1)(d+1) - 2 L(T) + L(u_{0}) + L(u_{p-1}) + \sigma(f), \label{eqn:sumup}
\eea
where we set
\bea
\sigma(f) & = & J_{f}(T) + \sum_{t=0}^{p-2}\(2\phi(u_{t},u_{t+1}) - \delta(u_{t},u_{t+1})\) \non \\
& = & \sum_{t=0}^{p-2}\(J_{f}(u_{t},u_{t+1}) + 2\phi(u_{t},u_{t+1}) - \delta(u_{t},u_{t+1})\). \label{eq:sigf}
\eea
Since $\phi(u_{t},u_{t+1}) \geq 0$ and $J_{f}(u_{t},u_{t+1}) \geq 0$ for each $t$ and $\delta(u_{t},u_{t+1}) = 0$ when $|W(T)| = 1$ and $\delta(u_{t},u_{t+1}) \in \{0, 1\}$ when $|W(T)| = 2$, we have
\begin{equation}
\label{eq:sigf12}
\sigma(f) \ge
\begin{cases}
0, & \mbox{ if } |W(T)| = 1 \\
-(p-1), & \mbox{ if } |W(T)| = 2.
\end{cases}
\end{equation}

In the rest of this section we introduce a few definitions pertaining to the family of trees to be studied in this paper.

\begin{Definition}
\label{def:two-br-tree}
A tree with exactly two branches is called a \emph{two-branch tree}. In other words, a tree $T$ is called a two-branch tree if $T - W(T)$ has exactly two components.
\end{Definition}

Since for any weight center $w$ of a tree $T$, each component of $T-w$ contains at most $|V(T)|/2$ vertices (see \cite{Daphne1}), we have the following lemma.

\begin{Lemma}
\label{lem3}
Let $T$ be a two-branch tree with branches $T_1$ and $T_2$. Then $|V(T_1)|-|V(T_2)| = \pm 1$ if $|W(T)|=1$ and $|V(T)|$ is even, and $|V(T_1)| = |V(T_2)|$ otherwise.
\end{Lemma}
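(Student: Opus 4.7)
The plan is to split into cases according to $|W(T)|$, and when $|W(T)|=1$ further according to the parity of $|V(T)|$, invoking in each case the cited fact that each component of $T - w$ contains at most $|V(T)|/2$ vertices for any weight center $w$.

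First, consider $|W(T)| = 2$, say $W(T) = \{w, w'\}$. By the characterization of trees with two weight centers recalled in Section \ref{prel}, $ww'$ is an edge and $T - ww'$ consists of two components of equal size. One of these components is $\{w\} \cup V(T_1)$ and the other is $\{w'\} \cup V(T_2)$ (up to relabelling), so $|V(T_1)| = |V(T_2)|$. (Note that $|V(T)|$ is necessarily even in this case.)

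Next, consider $|W(T)| = 1$, say $W(T) = \{w\}$. Then $T_1, T_2$ are the two components of $T - w$ and $|V(T_1)| + |V(T_2)| = |V(T)| - 1$. By the cited result, $|V(T_i)| \leq |V(T)|/2$ for $i = 1, 2$. If $|V(T)|$ is odd then in fact $|V(T_i)| \leq (|V(T)|-1)/2$; combined with $|V(T_1)| + |V(T_2)| = |V(T)| - 1$, this forces $|V(T_1)| = |V(T_2)| = (|V(T)|-1)/2$. If $|V(T)|$ is even then $|V(T_1)| + |V(T_2)| = |V(T)| - 1$ is odd, so $|V(T_1)| \neq |V(T_2)|$; assuming WLOG $|V(T_1)| < |V(T_2)|$, the bound $|V(T_2)| \leq |V(T)|/2$ together with $|V(T_1)| = |V(T)| - 1 - |V(T_2)| \geq |V(T)|/2 - 1$ pins down $|V(T_2)| = |V(T)|/2$ and $|V(T_1)| = |V(T)|/2 - 1$, giving $|V(T_1)| - |V(T_2)| = \pm 1$.

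There is no real obstacle here; the argument is a short case analysis whose only nontrivial input is the already-cited balance property of weight centers. The only point meriting care is observing that the $|W(T)|=2$ case can occur only for even $|V(T)|$, so the three cases of the lemma statement do cover all possibilities for two-branch trees.
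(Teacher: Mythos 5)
Your proof is correct and matches the paper's intent: the paper gives no explicit proof of Lemma \ref{lem3}, merely citing the fact that each component of $T-w$ has at most $|V(T)|/2$ vertices, and your case analysis (together with the equal-components characterization of two weight centers, also recalled in the paper's preliminaries) is exactly the routine elaboration of that one-line justification.
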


Let $T$ be a two-branch tree with $d = \diam(T)$. Define
$$
S(T) = \begin{cases}
\{u : u \in V(T), L(u) \geq \lceil d/2 \rceil\}, & \text{ if } |W(T)| = 1 \\
\{u : u \in V(T), L(u) \geq \lfloor d/2 \rfloor\}, & \text{ if } |W(T)| = 2
\end{cases}
$$
and call the vertices in $S(T)$ the \emph{remote vertices} of $T$. Obviously, if $|W(T)| = 1$, then $|S(T)| \geq |W(T)|$. However, we may have $|S(T)| < |W(T)|$ for some two-branch trees $T$ with $|W(T)| = 2$. Set
\begin{equation}
\label{eq:xit}
\xi(T) =
\begin{cases}
\lfloor |S(T)|/2 \rfloor, & \mbox{ if } |W(T)| = 1 \\
\max\{0, |S(T)|-2\}, & \mbox{ if } |W(T)| = 2.
\end{cases}
\end{equation}
Let
$$
OV(T): u_0,u_1,\ldots,u_{p-1}
$$
be a linear order of the vertices of $T$. A segment $u_i, \ldots, u_{j}$ of $OV(T)$ with \emph{length} $j-i+1$, where $0 \le i \le j \le p-1$, is called an \emph{interval of remote vertices} with respect to $OV(T)$ if each $u_t$ for $i \le t \le j$ is a remote vertex of $T$; such an interval is \emph{maximal} if neither $u_{i-1}$ (when $i > 0$) nor $u_{j+1}$ (when $j < p-1$) is a remote vertex of $T$. The linear order $OV(T)$ is called \emph{feasible} if all maximal intervals of remote vertices of $T$ have even lengths, except one when $|S(T)|$ is odd. The linear order $OV(T)$ is called \emph{admissible} if the following conditions are satisfied: (i) for each weight center $w$ of $T$, the unique vertex immediately before $w$ (if it exists) is a remote vertex, and the unique vertex immediately after $w$ (if it exists) is also a remote vertex; (ii) all maximal intervals of the remaining remote vertices of $T$ (that is, those not immediately before or after a weight center) have even lengths, except one when the number of remaining vertices is odd. (More specifically, condition (i) can be stated as follows: If $W(T) = \{u_0\}$, then $u_1 \in S(T)$; if $W(T) = \{u_{p-1}\}$, then $u_{p-2} \in S(T)$; if $W(T) = \{u_i\}$ for some $0 < i < p-1$, then $u_{i-1}, u_{i+1} \in S(T)$; if $W(T) = \{u_0, u_{p-1}\}$, then $u_1, u_{p-2} \in S(T)$; if $W(T) = \{u_0, u_{i}\}$ for some $i < p-1$, then $i > 2$ and $u_1,u_{i-1},u_{i+1} \in S(T)$; if $W(T) = \{u_i, u_{p-1}\}$ for some $i > 0$, then $i < p-3$ and $u_{i-1}, u_{i+1}, u_{p-2} \in S(T)$; and if $W(T) = \{u_i, u_j\}$ for some $0 < i < j < p-1$, then $j > i+2$ and $u_{i-1},u_{i+1},u_{j-1},u_{j+1}  \in S(T)$.)

\section{Radio number of two-branch trees}
\label{main}

\subsection{Results}

The lower bound in \eqref{eqn:lb} may not be tight for a general tree, though it can be achieved as shown in Theorem \ref{thm:ddb}. On the other hand, it is not straightforward to use Theorem \ref{thm:ddb} to check whether the lower bound in \eqref{eqn:lb} is tight for a given tree. Our first result gives a simple condition under which the lower bound in \eqref{eqn:lb} is slack for a two-branch tree.

\begin{Theorem}
\label{thm:nlb}
Let $T$ be a two-branch tree with order $p$ and diameter $d \geq 2$. If $|S(T)| > |W(T)|$, then
$$
\rn(T) > (p-1)(d+\ve(T))-2L(T)+\ve(T).
$$
\end{Theorem}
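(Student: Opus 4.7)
The plan is to argue by contradiction: suppose that $\rn(T) = (p-1)(d+\ve(T))-2L(T)+\ve(T)$. By Theorem \ref{thm:ddb}, there exists a linear order $u_0, u_1, \ldots, u_{p-1}$ of $V(T)$ satisfying conditions (a) and (b) of that theorem. I will use (b) to severely restrict where remote vertices can appear in this ordering, and then count to contradict $|S(T)| > |W(T)|$.

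The first step is to specialize (b) to $j = i+1$ and substitute the distance formula \eqref{eqn:dist}. This reduces to $\delta(u_i, u_{i+1}) - 2\phi(u_i, u_{i+1}) \geq 1 - \ve(T)$ for every $i \in \{0,\ldots,p-2\}$. When $|W(T)| = 1$ this forces $\phi(u_i, u_{i+1}) = 0$; when $|W(T)| = 2$ it forces both $\phi(u_i, u_{i+1}) = 0$ and $\delta(u_i, u_{i+1}) = 1$. Geometrically, consecutive non-weight-center vertices in the ordering lie in opposite branches of $T$, so apart from the weight-center position(s) pinned down by (a), the ordering alternates strictly between the two branches.

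The second step is to apply (b) with $j = i+2$ for each index $i$ for which neither $u_i$ nor $u_{i+2}$ is a weight center. The alternation from the first step then forces $u_i$ and $u_{i+2}$ to lie in the same branch, and so they share the root of that branch (a vertex of level $1$) as a common ancestor, giving $\phi(u_i, u_{i+2}) \geq 1$ and $\delta(u_i, u_{i+2}) = 0$. Substituting these into (b) through \eqref{eqn:dist} yields
\[
L(u_{i+1}) \;\leq\; \tfrac{1}{2}\bigl(d + 2\ve(T) - 3\bigr).
\]
Since $L(u_{i+1})$ is an integer, a short parity check shows that the right-hand side is strictly less than $\lceil d/2\rceil$ when $\ve(T)=1$ and strictly less than $\lfloor d/2\rfloor$ when $\ve(T)=0$; in either case $u_{i+1}$ cannot be a remote vertex.

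The last step is to tally the admissible positions. The previous step rules out remoteness for every $u_{i+1}$ whose two sequence-neighbours are both non-weight-centers, while condition (a) forces $u_0$ (and, when $|W(T)|=2$, also $u_{p-1}$) to be a weight center of level $0$, hence non-remote. What remains is $u_1$, and possibly $u_{p-1}$ when $|W(T)|=1$ (the latter has $L(u_{p-1})=1$ by (a), hence is non-remote as soon as $\lceil d/2\rceil\geq 2$), or $u_1$ and $u_{p-2}$ when $|W(T)|=2$. In each case the number of candidate positions for remote vertices is at most $|W(T)|$, contradicting the hypothesis $|S(T)|>|W(T)|$. I expect the main technical hurdle to be the parity/integrality argument that converts $\phi(u_i,u_{i+2})\geq 1$ into a \emph{strict} non-remote bound for both parities of $d$ and both values of $\ve(T)$; the remainder is bookkeeping about the alternating structure of the ordering.
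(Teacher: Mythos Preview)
Your proof is correct and follows essentially the same route as the paper: both assume equality, invoke Theorem~\ref{thm:ddb}, and use condition~(b) with $j=i+2$ together with $\phi(u_i,u_{i+2})\ge 1$ to show that no remote vertex can sit between two non-weight-center neighbours in the order, which contradicts $|S(T)|>|W(T)|$ by counting. Your explicit Step~1 (consecutive vertices lie in different/opposite branches) is exactly what the paper needs, but does not write out, to justify its assertion that $\phi(u_{t-1},u_{t+1})\ge 1$, so your argument is in fact slightly more complete.
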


Our second result in this section is the theorem below. It shows that for two-branch trees the lower bound in \eqref{eqn:lb} can be improved by $\xi(T)$, where $\xi(T)$ is as defined in \eqref{eq:xit}. This theorem also gives a necessary and sufficient condition for our improved lower bound to be tight.

\begin{Theorem}
\label{thm:lower}
Let $T$ be a two-branch tree with order $p$ and diameter $d \geq 2$. Then
\be
\label{rn:lower}
\rn(T) \geq (p-1)(d+\ve(T)) - 2 L(T) + \ve(T) + \xi(T).
\ee
Moreover, equality holds if and only if there exists a linear order $OV(T): u_{0}, u_{1}, \ldots, u_{p-1}$ of the vertices of $T$ such that the following conditions are satisfied:
\begin{enumerate}[\rm (a)]
\item
\begin{enumerate}[\rm (i)]
\item in the case when $|W(T)| =  1$, the following hold: if $|S(T)|$ is odd, then $L(u_{0})+L(u_{p-1}) = 1$ and $OV(T)$ is admissible; if $|S(T)|$ is even, then either $L(u_{0})+L(u_{p-1}) = 1$ and $OV(T)$ is feasible or admissible, or $L(u_{0})+L(u_{p-1}) = 2$ and $OV(T)$ is admissible;
\item in the case when $|W(T)| = 2$, $OV(T)$ is admissible and the following hold: if $|S(T)| = 1$ or 2, then $L(u_{0})+L(u_{p-1}) = 0$; if $|S(T)| \geq 3$ is odd, then $L(u_{0})+L(u_{p-1}) = 1$; if $|S(T)| \ge 4$ is even, then $L(u_{0})+L(u_{p-1}) = 0$ or $2$;
\end{enumerate}
\item for $0 \leq i < j \leq p-1$, the distance $d(u_{i},u_{j})$ between $u_{i}$ and $u_{j}$ in $T$ satisfies
\be
\label{eqn:duv}
d(u_{i},u_{j}) \geq \sum_{t=i}^{j-1}\{L(u_{t})+L(u_{t+1})-a_t-(d+\ve(T))\} + (d+1),
\ee
where $a_0,a_1,\ldots,a_{p-2}$ are the non-negative integers determined by
\begin{equation}
\label{eq:a0}
a_0 = 0
\end{equation}
\begin{equation}
\label{eq:seq}
a_{t-1} + a_{t} =
\begin{cases}
|W(T)|, & \mbox{if $u_{t} \in S(T)$ and $\{u_{t-1}, u_{t+1}\} \cap W(T) = \emptyset$} \\
0, & \mbox{otherwise}
\end{cases}
\end{equation}
for $1 \le t \le p-2$.
\end{enumerate}
Furthermore, under conditions (a) and (b) the mapping $f: V(T) \rightarrow \{0, 1, 2, \ldots\}$ defined by
\be
\label{f00}
f(u_{0}) = 0
\ee
\be
\label{f11}
f(u_{i+1}) = f(u_{i}) - (L(u_{i}) + L(u_{i+1})) + a_{i} + (d + \ve(T)),\ 0 \leq i \leq p-2
\ee
is an optimal radio labelling of $T$.
\end{Theorem}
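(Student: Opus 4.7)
The plan is to prove this in three components: (i) establish the lower bound (\ref{rn:lower}); (ii) assuming (a) and (b), verify that the labelling $f$ defined in (\ref{f00})--(\ref{f11}) is a valid radio labelling whose span equals the right-hand side of (\ref{rn:lower}); and (iii) show conversely that any optimal radio labelling achieving (\ref{rn:lower}) induces an ordering satisfying (a) and (b). The starting point for all three components is the identity (\ref{eqn:sumup}), which reduces the problem to analyzing $L(u_0) + L(u_{p-1}) + \sigma(f)$ where $\sigma(f)$ is defined in (\ref{eq:sigf}).

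For (i), the improvement $\xi(T)$ over the crude bound (\ref{eq:sigf12}) comes from a local three-term argument. At each $t$ with $u_t \in S(T)$ and $\{u_{t-1}, u_{t+1}\} \cap W(T) = \emptyset$, combine the radio inequality for the pair $(u_{t-1}, u_{t+1})$ with the two consecutive radio inequalities around $u_t$, then substitute (\ref{eqn:dist}) into each of the three distances. The remoteness of $u_t$ supplies $2L(u_t) \geq d$ when $|W(T)| = 1$ and $2L(u_t) \geq d - 1$ when $|W(T)| = 2$, producing a local surplus in $J_f(u_{t-1}, u_t) + J_f(u_t, u_{t+1}) + 2\phi(u_{t-1}, u_t) + 2\phi(u_t, u_{t+1}) - \delta(u_{t-1}, u_t) - \delta(u_t, u_{t+1})$ relative to the bound implicit in (\ref{eq:sigf12}). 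Aggregating these local surpluses into the global sum $\sigma(f)$ requires a parity argument tracking maximal blocks of remote vertices in $OV_f(T)$ together with the location of weight centers, and the contribution of the boundary term $L(u_0) + L(u_{p-1})$; the count gives exactly $\xi(T)$, with the exceptional cases (odd $|S(T)|$, or $|W(T)| = 2$ with few remote vertices) absorbed by the $\ve(T)$ term.

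For (ii), I would first verify inductively that the recursion (\ref{eq:a0})--(\ref{eq:seq}) yields a consistent non-negative integer sequence $(a_t)$; the admissibility/feasibility condition in (a), specifically the even parity of maximal blocks of non-flanking remote vertices, is precisely what prevents the recursion from forcing a negative $a_t$. Telescoping (\ref{f11}) then gives $f(u_{p-1}) = (p-1)(d+\ve(T)) - 2L(T) + L(u_0) + L(u_{p-1}) + \sum_{t=0}^{p-2} a_t$; a per-block count of $\sum a_t$, using the parity of each maximal remote block and the prescribed values of $L(u_0) + L(u_{p-1})$ in (a), delivers $L(u_0) + L(u_{p-1}) + \sum a_t = \ve(T) + \xi(T)$, so the span matches the lower bound. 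Finally, $f$ is a radio labelling: by (\ref{f11}) combined with $a_i \geq 0$, consecutive pairs satisfy the radio inequality, and for non-consecutive $u_i, u_j$ we telescope (\ref{f11}) and invoke (b) to obtain $f(u_j) - f(u_i) \geq (d+1) - d(u_i, u_j)$, exactly as in the proof of Theorem \ref{thm:ddb}.

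For (iii), suppose $f$ achieves equality in (\ref{rn:lower}); tracing the inequalities used in (i) backwards forces equality at every step. Each interior remote $u_t$ must satisfy $L(u_t) = \lceil d/2 \rceil$ or $\lfloor d/2 \rfloor$ (per $|W(T)|$), each $\phi$ and $\delta$ around such $u_t$ must take extremal values, and the parities of the maximal remote blocks must match those prescribed in (a) so that no local surplus is wasted at a block boundary. Equality in the consecutive radio inequality becomes (b). The main obstacle is the combinatorial bookkeeping in (i) and the matching case analysis in (ii)--(iii): the simultaneous splits on $|W(T)| \in \{1, 2\}$, on the parity of $|S(T)|$, and on the allowed values $L(u_0) + L(u_{p-1}) \in \{0, 1, 2\}$ interact in a tangled way with the placement of weight centers within $OV(T)$, which explains the need for the \emph{series of technical lemmas} flagged in the introduction.
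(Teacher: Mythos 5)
Your plan follows essentially the same route as the paper's proof: the reduction via \eqref{eqn:sumup} to bounding $L(u_0)+L(u_{p-1})+\sigma(f)$, the local three-term surplus at a remote vertex flanked by non-weight-centers (the paper's Lemma \ref{lem5}), the aggregation over maximal blocks of remote vertices with parity and weight-center bookkeeping (Lemmas \ref{lem6}--\ref{lem10}, including the separate treatment of orderings with ``bad pairs''), and the same two-directional equality analysis with $a_t = J_f(u_t,u_{t+1})$. The only caveat is that the aggregation step, which you correctly flag as the main obstacle, is where nearly all of the paper's technical work lives and is left as an outline in your proposal.
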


\begin{Remark}
\label{rem:seq}
{\em
Using the fact that $OV(T)$ satisfies the conditions in (a), we can prove that there is a unique sequence of integers $a_0,a_1,\ldots,a_{p-2}$ satisfying \eqref{eq:a0} and \eqref{eq:seq}, and moreover $a_i \in \{0, |W(T)|\}$ for $0 \le t \le p-2$. Of course, this sequence of non-negative integers is uniquely determined by the linear order $OV(T)$.
}
\end{Remark}

\begin{Remark}
\label{rem:pn}
{\em
In \cite{Liu}, Liu and Zhu proved that for the path $P_{n}$ of order $n \geq 4$ we have
\begin{equation}
\label{rn:path}
\rn(P_{n}) =
\begin{cases}
2k^{2}+2, & \mbox{if  } n = 2k+1 \\
2k(k-1)+1, & \mbox{if  } n = 2k.
\end{cases}
\end{equation}
This result can be derived from Theorem \ref{thm:lower}. In fact, $P_{n}$ is a two-branch tree with one weight center when $n$ is odd and two weight centers when $n$ is even. It can be verified that the linear order of the vertices of $P_n$ given in \cite[Theorem 3]{Liu} satisfies the necessary and sufficient condition in Theorem \ref{thm:lower}. Thus, $\rn(P_{n})$ is equal to the right-hand side of \eqref{rn:lower}, yielding \eqref{rn:path} exactly. In fact, $P_{n}$ has order $p = n$ and diameter $d = n-1$. Also, $\xi(P_{n}) = 2$, and $L(P_{n})$ is equal to $(n^{2}-1)/4$ when $n$ is odd and $n(n-2)/4$ when $n$ is even. Substituting all these into \eqref{rn:lower}, we obtain \eqref{rn:path} immediately.
}
\end{Remark}

The lower bound in \eqref{rn:lower} is a significant improvement of the one in \eqref{eqn:lb} as the difference between them (namely, $\xi(T)$) can be arbitrarily large. We illustrate this by constructing a family of caterpillars $T$ for which $\xi(T)$ can be arbitrarily large and the lower bound in \eqref{rn:lower} is attained. In general, a \emph{caterpillar} is a tree for which the removal of all its leaves results in a path called the spine. Given integers $n \ge 3$ and $k \ge 1$, define $C(n,k)$ to be the caterpillar with spine $v_1,v_2,\ldots,v_n$, where $v_i$ is adjacent to $v_{i+1}$ for $1 \leq i \leq n-1$, such that the following hold: (i) if $n$ is odd, then each of $v_1, v_{(n-1)/2}, v_{(n+3)/2}$ and $v_n$ has exactly $k$ neighbours of degree one and all other vertices on the spine have no neighbour of degree one; (ii) if $n$ is even, then each of $v_1, v_{(n-2)/2}, v_{(n+4)/2}$ and $v_n$ has exactly $k$ neighbours of degree one and all other vertices on the spine have no neighbour of degree one. Thus $C(n,k)$ has order $p = n + 2k \min\{2, \lfloor (n-1)/2 \rfloor\}$ and diameter $n+1$. Note that
$$
\ve(C(n,k)) =
\begin{cases}
1, & \mbox{ if $n$ is odd} \\
0, & \mbox{ if $n$ is even}.
\end{cases}
$$
and
\begin{equation*}
\xi(C(n,k)) =
\begin{cases}
k, & \mbox{ if $n$ is odd} \\
2(k-1), & \mbox{ if $n$ is even}.
\end{cases}
\end{equation*}
Since $k \geq 1$ is an arbitrary integer, $\xi(C(n,k))$ can be made arbitrarily large.

Our third result in this section is as follows.

\begin{Theorem}
\label{thm:cat}
Let $n \geq 3$ and $k \geq 1$ be integers. Then
\begin{equation}
\label{rn:cat}
\rn(C(n,k)) =
\begin{cases}
3k+7, & \mbox{if $n = 3$} \\
4k+11, & \mbox{if $n = 4$} \\
\frac{1}{2}(n^2+4nk+2n-2k-1), & \mbox{if $n > 3$ is odd} \\
\frac{1}{2}(n^2+4nk+2n-4k-6), & \mbox{if $n > 4$ is even}.
\end{cases}
\end{equation}
In particular, $\rn(C(n,k))$ attains the lower bound in \eqref{rn:lower}.
\end{Theorem}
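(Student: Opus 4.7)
The plan is to apply Theorem \ref{thm:lower} to $T = C(n,k)$. The proof naturally splits into two tasks: first, verifying that the right-hand side of \eqref{rn:lower} equals the value stated in \eqref{rn:cat}; and second, exhibiting a linear order of $V(C(n,k))$ satisfying conditions (a) and (b) of Theorem \ref{thm:lower}, so that the radio labelling defined by \eqref{f00}--\eqref{f11} is optimal and has span equal to the claimed value.

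For the first task, I would tabulate, for each case of $n$, the parameters $p$, $d = n+1$, $\ve(T)$, $L(T)$, $|S(T)|$ and $\xi(T)$ directly from the structure of $C(n,k)$. The weight centre is $\{v_{(n+1)/2}\}$ when $n$ is odd (so $\ve(T)=1$) and $\{v_{n/2}, v_{n/2+1}\}$ when $n$ is even (so $\ve(T)=0$); determining the level of every spine vertex and every leaf and summing produces a closed-form expression for $L(T)$. The set $S(T)$ consists of the $2k$ leaves attached to the outer spine vertices $v_1$ and $v_n$, and \eqref{eq:xit} then yields $\xi(T)$. Substituting these data into the right-hand side of \eqref{rn:lower} and simplifying reproduces \eqref{rn:cat}; the special forms for $n = 3$ and $n = 4$ reflect the degeneracy that several of the four designated pendant-carrying spine vertices in the definition of $C(n,k)$ coincide when $n$ is small.

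For the second task, I would describe the ordering $u_0, u_1, \ldots, u_{p-1}$ explicitly. The strategy is to place a weight centre at $u_0$, and when $|W(T)| = 2$ the other weight centre at $u_{p-1}$; to pair up the $2k$ remote leaves so that each pair contains one leaf at $v_1$ and one leaf at $v_n$; to insert these pairs into the interior of the ordering with non-remote vertices interspersed so that any two consecutive remote vertices lie in opposite branches; and to fill the remaining slots alternately with non-remote vertices drawn from the two branches, so that consecutive pairs $(u_t, u_{t+1})$ span opposite branches whenever possible. Admissibility (and feasibility when parity allows) then follows from condition (a) of Theorem \ref{thm:lower} together with this paired structure; by Remark \ref{rem:seq}, the induced sequence $a_0, \ldots, a_{p-2}$ is uniquely determined, lies in $\{0, |W(T)|\}$, and sums to $\xi(T)$, matching the excess of \eqref{rn:lower} over \eqref{eqn:lb}.

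The main obstacle is verifying the distance inequality \eqref{eqn:duv} for every pair $0 \le i < j \le p-1$, not merely for consecutive indices. The construction is arranged so that equality in \eqref{eqn:duv} holds for $j = i+1$; for longer intervals one telescopes and uses the fact that in a caterpillar every distance is determined by the spine positions of the endpoints, which reduces the verification to a finite case analysis according to whether and how the interval $[i, j]$ encloses a weight centre and how many consecutive pairs fall short of spanning the full diameter. The small cases $n = 3$ and $n = 4$, where $C(n,k)$ reduces to two leaf-clusters joined by a short spine, are handled by explicit inspection. Once \eqref{eqn:duv} has been verified, Theorem \ref{thm:lower} immediately delivers both the claimed value of $\rn(C(n,k))$ and the assertion that the lower bound \eqref{rn:lower} is attained.
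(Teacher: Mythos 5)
Your overall strategy coincides with the paper's: compute $p$, $d=n+1$, $\ve(C(n,k))$, $L(C(n,k))$, $|S(C(n,k))|$ and $\xi(C(n,k))$, check that the right-hand side of \eqref{rn:lower} simplifies to \eqref{rn:cat}, and then exhibit a linear order satisfying conditions (a) and (b) of Theorem \ref{thm:lower}. The parameter computation and the identification of $S(C(n,k))$ with the $2k$ leaves attached to $v_1$ and $v_n$ are correct. The gap lies in your treatment of condition (b). You claim that arranging equality in \eqref{eqn:duv} for $j=i+1$ and then ``telescoping'' handles longer intervals, but \eqref{eqn:duv} does not telescope: its right-hand side over a long interval is a sum of per-step terms minus $(j-i-1)(d+1)$, while the left-hand side $d(u_i,u_j)$ can be much smaller than $\sum_{t=i}^{j-1} d(u_t,u_{t+1})$ when $u_i$ and $u_j$ fall in the same branch. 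The binding case is $j=i+2$ with $u_i,u_{i+1},u_{i+2}$ all remote and $u_i,u_{i+2}$ two leaves hanging off the \emph{same} outer spine vertex: then $d(u_i,u_{i+2})=2$, whereas the right-hand side of \eqref{eqn:duv} evaluates to $n-1$ (for $n\ge 5$ odd) or $n-2$ (for $n\ge 6$ even), so the inequality fails. Consequently, admissibility (even-length runs of remote vertices) is \emph{not} sufficient: for $n\ge 5$ the maximal runs of remote vertices must have length exactly $2$, each pair consisting of one leaf at $v_1$ and one at $v_n$, separated by non-remote vertices. Your phrase ``insert these pairs \ldots with non-remote vertices interspersed'' is consistent with this, but you never identify it as a constraint forced by \eqref{eqn:duv} rather than a convenience, and it is precisely the point at which an ``admissible'' order with a run of four remote leaves would break.

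Two further loose ends. First, for $n=3$ there are only two non-remote non-central vertices, so the pairs cannot be separated at all; the paper instead uses a single run of all $2k$ remote leaves, which survives only because for $n=3$ the deficit in the displayed computation degenerates (the right-hand side of \eqref{eqn:duv} drops to $2=d(u_i,u_{i+2})$), so ``explicit inspection'' here is hiding a structurally different order, not a small perturbation of the general one. Second, for $n\ge 6$ even you place the two weight centers at $u_0$ and $u_{p-1}$ (so $L(u_0)+L(u_{p-1})=0$), whereas the paper places them at interior positions $u_2$ and $u_5$ with $L(u_0)+L(u_{p-1})=2$; both are permitted by condition (a) when $|S(T)|\ge 4$ is even, but you would still owe a full verification of \eqref{eqn:duv} for your variant, and that verification is exactly the part of the proof you have deferred.
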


We will prove Theorem  \ref{thm:nlb} in the next subsection. As preparations for the proof of Theorem \ref{thm:lower}, we will prove several lemmas in Subsection \ref{subsec:lms}. The proofs of Theorems \ref{thm:lower} and \ref{thm:cat} will be given in Subsections \ref{subsec:lb} and \ref{subsec:cat}, respectively.

\subsection{Proof of Theorem \ref{thm:nlb}}
\label{subsec:nlb}

\begin{proof} [Proof of Theorem \ref{thm:nlb}]
Suppose to the contrary that there exists a two-branch tree $T$ with order $p$ and diameter $d \geq 2$ such that $|S(T)| > |W(T)|$ but $\rn(T) = (p-1)(d+\ve(T))-2L(T)+\ve(T)$. Then there exists a linear order $u_{0},u_{1}, \ldots, u_{p-1}$ of the vertices of $T$ such that (a) and (b) in Theorem \ref{thm:ddb} are satisfied and moreover the mapping defined by \eqref{f0} and \eqref{f1} is an optimal radio labelling of $T$. Let $T_{1}$ and $T_{2}$ be the two branches of $T$. By Lemma \ref{lem3}, $|V(T_1)|-|V(T_2)| = \pm 1$ if $|W(T)|=1$ and $|V(T)|$ is even, and $|V(T_1)| = |V(T_2)|$ otherwise.

Assume first that $|W(T)| = 1$, say, $W(T) = \{w\}$. Then $|S(T) \cap V(T_1)| + |S(T) \cap V(T_2)| = |S(T)| > |W(T)| = 1$. This together with $u_{0} = w$ and $u_{p-1} \in N(w)$ (by condition (a) in Theorem \ref{thm:ddb}) implies that there exists a vertex $u_{t} \in V(T)$ such that $u_{t-1}, u_{t+1} \neq w$ and $L(u_{t}) \geq \lceil d/2 \rceil$. Let $L(u_{t-1}) = a$ and $L(u_{t+1}) = b$. By \eqref{eq:dij}, we have $d(u_{t-1},u_{t+1}) \ge L(u_{t-1})+2L(u_{t})+L(u_{t+1})-(d+1) \geq a+2(d/2)+b-(d+1) = a+b-1 > a+b-2\phi(u_{t-1}, u_{t+1}) = d(u_{t-1},u_{t+1})$ as $\phi(u_{t-1},u_{t+1}) \geq 1$, but this is a contradiction.

Now assume that $|W(T)| = 2$, say, $W(T) = \{w, w'\}$. Then $|S(T) \cap V(T_1)| + |S(T) \cap V(T_2)| = |S(T)| > |W(T)| = 2$. This together with $\{u_{0}, u_{p-1}\} = \{w,w'\}$ (by condition (a) in Theorem \ref{thm:ddb}) implies that there exists a vertex $u_{t} \in V(T)$ such that $u_{t-1}, u_{t+1} \not\in \{w, w'\}$ and $L(u_{t}) \geq \lfloor d/2 \rfloor$. Let $L(u_{t-1}) = a$ and $L(u_{t+1}) = b$. By \eqref{eq:dij}, we have $d(u_{t-1},u_{t+1}) \ge L(u_{t-1})+2L(u_{t})+L(u_{t+1})-d+1 = a+2((d-1)/2)+b-d+1 = a+b > a+b-2\phi(u_{t-1},u_{t+1}) = d(u_{t-1},u_{t+1})$ as $\phi(u_{t-1},u_{t+1}) \geq 1$, which is a contradiction.
\end{proof}

\subsection{Lemmas}
\label{subsec:lms}

\begin{Lemma}
\label{lem5}
Let $T$ be a two-branch tree with order $p$ and diameter $d \geq 2$. Let $f$ be a radio labelling of $T$ and $u_{0}, u_{1}, \ldots, u_{p-1}$ be the corresponding linear order as defined in \eqref{eqn:ord}. Let $i$ be an integer with $1 \leq i \leq p-3$ such that
\begin{enumerate}[\rm (a)]
  \item $u_{i}, u_{i+2} \not\in W(T)$, and $u_{i+1} \in S(T)$; and
  \item for $t=i,i+1$, $u_{t}$ and $u_{t+1}$ are in different branches when $|W(T)| = 1$ and opposite branches when $|W(T)| = 2$.
\end{enumerate}
Then
\begin{equation}
\label{eqn:jvp}
J_{f}(u_i,u_{i+1},u_{i+2}) \geq |W(T)|.
\end{equation}
\end{Lemma}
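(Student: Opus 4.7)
The plan is to extract a lower bound on $J_f(u_i,u_{i+1},u_{i+2})$ by invoking the radio condition on the \emph{non-adjacent} pair $u_i,u_{i+2}$, which is the only pair among the three whose contribution is not already encoded in $J_f(u_i,u_{i+1})$ and $J_f(u_{i+1},u_{i+2})$. From \eqref{def:jvp} and \eqref{eq:jij},
$$
J_f(u_i,u_{i+1},u_{i+2}) \;=\; \bigl(f(u_{i+2})-f(u_i)\bigr) + d(u_i,u_{i+1}) + d(u_{i+1},u_{i+2}) - 2(d+1),
$$
so combining with $f(u_{i+2})-f(u_i) \ge (d+1) - d(u_i,u_{i+2})$ reduces the claim to showing
$$
d(u_i,u_{i+1}) + d(u_{i+1},u_{i+2}) - d(u_i,u_{i+2}) \;\ge\; (d+1) + |W(T)|.
$$

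I then expand each distance via Lemma~\ref{lem4}(d). Since $d\ge 2$, the hypothesis $u_{i+1}\in S(T)$ forces $L(u_{i+1})>0$, so $u_{i+1}\notin W(T)$; together with condition~(a), none of $u_i,u_{i+1},u_{i+2}$ lies in $W(T)$, so Lemma~\ref{lem4}(b),(c) apply cleanly. Condition~(b) then yields $\phi(u_i,u_{i+1})=\phi(u_{i+1},u_{i+2})=0$ and $\delta(u_i,u_{i+1})=\delta(u_{i+1},u_{i+2})=|W(T)|-1$. The key structural step is the following: because $T$ has exactly two branches and both $u_i$ and $u_{i+2}$ sit in the branch opposite to the unique branch containing $u_{i+1}$, the vertices $u_i$ and $u_{i+2}$ must lie in the \emph{same} branch of $T$. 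They therefore share the root of that branch as a common ancestor, so $\phi(u_i,u_{i+2})\ge 1$, while $\delta(u_i,u_{i+2})=0$.

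Feeding these values into \eqref{eqn:dist}, the left-hand side of the reduced inequality simplifies to $2L(u_{i+1})+2\phi(u_i,u_{i+2})+2(|W(T)|-1)$, which is at least $2L(u_{i+1})+2|W(T)|$. Thus it suffices to verify $2L(u_{i+1}) \ge d+1-|W(T)|$, which is a short case split on $|W(T)|$: when $|W(T)|=1$, membership in $S(T)$ gives $L(u_{i+1})\ge \lceil d/2\rceil$, so $2L(u_{i+1})\ge d$; when $|W(T)|=2$, membership in $S(T)$ gives $L(u_{i+1})\ge \lfloor d/2\rfloor$, so $2L(u_{i+1})\ge d-1$. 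The main obstacle is precisely the structural observation $\phi(u_i,u_{i+2})\ge 1$---this is the only place where the \emph{two-branch} hypothesis is essential, and it is the step that would fail for trees with three or more branches; the surrounding manipulation is routine bookkeeping with Lemma~\ref{lem4}.
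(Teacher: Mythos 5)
Your proposal is correct and follows essentially the same route as the paper's proof: both reduce the claim to the triangle-type inequality $d(u_i,u_{i+1})+d(u_{i+1},u_{i+2})-d(u_i,u_{i+2})\geq (d+1)+|W(T)|$ via the radio condition on the pair $u_i,u_{i+2}$, then expand all three distances with Lemma \ref{lem4}(d), using the two-branch hypothesis to get $\phi(u_i,u_{i+2})\geq 1$ and the bound $2L(u_{i+1})\geq d+1-|W(T)|$ from $u_{i+1}\in S(T)$. Your explicit justification that $u_{i+1}\notin W(T)$ and your remark isolating where the two-branch hypothesis is essential are accurate refinements of the same argument.
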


\begin{proof}
Since $f$ is a radio labelling, we have $f(u_{i+2}) - f(u_{i}) \geq d + 1 - d(u_{i},u_{i+2})$. Combining this and \eqref{eq:jij}, we obtain
\bea
J_{f}(u_i,u_{i+1},u_{i+2})
& = & f(u_{i+2}) - f(u_{i}) - 2(d+1) + d(u_{i},u_{i+1}) + d(u_{i+1},u_{i+2})  \non \\
& \geq & d(u_{i},u_{i+1}) + d(u_{i+1},u_{i+2}) - d(u_{i},u_{i+2}) - (d+1). \label{eqn:jvp2}
\eea
Since $u_{i+1} \in S(T)$, we have $L(u_{i+1}) \geq (d-(|W(T)|-1))/2$. Note that $u_{i}$ and $u_{i+2}$ are in the same branch as condition (b) is satisfied and $T$ is a two-branch tree. Note also that $u_i, u_{i+2} \not\in W(T)$ by (a). Hence $\phi(u_{i},u_{i+2}) \geq 1$ and $d(u_{i},u_{i+2}) = L(u_{i})+L(u_{i+2})-2\phi(u_{i},u_{i+2})$ by \eqref{eqn:dist}. Also, by condition (b), we have $d(u_{t},u_{t+1}) = L(u_{t})+L(u_{t+1}) + (|W(T)|-1)$ for $t = i,i+1$. Plugging all these into \eqref{eqn:jvp2}, we obtain
\bean
J_{f}(u_i,u_{i+1},u_{i+2}) & \geq & 2 L(u_{i+1}) + 2(|W(T)|-1) + 2\phi(u_{i},u_{i+2}) - (d+1) \\
& \geq & (d-(|W(T)|-1)) + 2(|W(T)|-1) + 2 - (d+1) \\
& = & |W(T)|
\eean
as required.
\end{proof}

\begin{Lemma}
\label{lem6}
Let $T$ be a two-branch tree with order $p$ and diameter $d \geq 2$. Let $f$ be a radio labelling of $T$ and $u_{0},u_{1},\ldots,u_{p-1}$ be the corresponding linear order as defined in \eqref{eqn:ord}. Let $i$ and $j$ be integers with $0 < i \leq j < p-1$ such that
\begin{enumerate}[\rm (a)]
\item $u_{t} \not\in W(T)$ for $i \leq t \leq j$, and $\{u_{i-1}, u_{i}, u_{j}, u_{j+1}\} \cap S(T) = \{u_{i}, u_{j}\}$; and
\item for $i-1 \leq t \leq j$, $u_{t}$ and $u_{t+1}$ are in different branches when $|W(T)| = 1$ and opposite branches when $|W(T)| = 2$.
\end{enumerate}
Then
\begin{equation}
\label{eqn:jvp3}
J_{f}(u_{i-1}, \ldots, u_{j+1}) \geq
\frac{|\{u_i, \ldots, u_j\} \cap S(T)| - |\{u_{i-1},u_{j+1}\} \cap W(T)|}{3-|W(T)|}.
\end{equation}
\end{Lemma}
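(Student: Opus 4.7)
The plan is to apply Lemma \ref{lem5} to triples centered at each remote vertex of $\{u_i, \ldots, u_j\}$ and then sum the resulting inequalities, being careful about how many times each edge contribution $J_f(u_r, u_{r+1})$ gets double-counted. First, let $k = |\{u_i, \ldots, u_j\} \cap S(T)|$ and enumerate the remote vertices of $\{u_i, \ldots, u_j\}$ as $u_{t_1}, \ldots, u_{t_k}$ with $t_1 < \cdots < t_k$; condition~(a) forces $t_1 = i$ and $t_k = j$. For each $s$, consider the triple $(u_{t_s - 1}, u_{t_s}, u_{t_s + 1})$. Its middle vertex is remote, and condition~(b) of the current lemma is precisely the ``different/opposite branches'' hypothesis required by condition~(b) of Lemma \ref{lem5}. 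Hence Lemma \ref{lem5} applies to this triple whenever $u_{t_s - 1}, u_{t_s + 1} \notin W(T)$; I will call such a triple \emph{good}.

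Because condition~(a) of the current lemma forces $u_t \notin W(T)$ for $i \le t \le j$, every triple centered at an interior $t_s$ with $i < t_s < j$ is automatically good. A boundary triple at $t_s = i$ fails only if $u_{i-1} \in W(T)$, and one at $t_s = j$ fails only if $u_{j+1} \in W(T)$; when $i = j$ the single triple fails only if at least one of $u_{i-1}, u_{j+1}$ is a weight center. In every case the number of non-good triples is at most $w := |\{u_{i-1}, u_{j+1}\} \cap W(T)|$, so the number of good triples is at least $k - w$. Each good triple yields
\[
J_f(u_{t_s - 1}, u_{t_s}) + J_f(u_{t_s}, u_{t_s + 1}) \;\ge\; |W(T)|
\]
by Lemma \ref{lem5}.

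Summing these good-triple inequalities rewrites the left-hand side as $\sum_{r=i-1}^{j} c_r J_f(u_r, u_{r+1})$, where $c_r$ counts the good triples whose edge set contains $\{u_r, u_{r+1}\}$. Any such edge lies only in the triples centered at its two endpoints, so $c_r \le 2$; together with $J_f(u_r, u_{r+1}) \ge 0$ this yields
\[
2\,J_f(u_{i-1}, \ldots, u_{j+1}) \;=\; \sum_{r=i-1}^{j} 2\,J_f(u_r, u_{r+1}) \;\ge\; \sum_{r=i-1}^{j} c_r J_f(u_r, u_{r+1}) \;\ge\; |W(T)|(k-w).
\]
Since $|W(T)|/2 = 1/(3 - |W(T)|)$ for $|W(T)| \in \{1,2\}$, dividing by $2$ gives exactly \eqref{eqn:jvp3}.

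The main obstacle is the combinatorial bookkeeping above, in particular verifying $c_r \le 2$ and that the number of failing boundary triples never exceeds $w$ in the small edge cases (most notably $i = j$, or when one or both of $u_{i-1}, u_{j+1}$ happens to be a weight center). Once those points are confirmed, the bound follows from a single summation of the Lemma \ref{lem5} inequalities, with no further geometric input required.
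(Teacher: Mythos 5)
Your proof is correct, and it takes a genuinely different route from the paper's. The paper proves \eqref{eqn:jvp3} by first establishing the bound for a block of consecutive remote vertices by induction on the block length (peeling off two remote vertices at a time via \eqref{eqn:jvp}), then decomposing a general segment into its maximal runs of remote vertices, and finally running a separate case analysis over $|W(T)| \in \{1,2\}$ and over how many of $u_{i-1}, u_{j+1}$ are weight centers (five subcases in all). You instead sum the Lemma \ref{lem5} inequality over all good triples at once and divide by two, using four observations: each pair $J_f(u_r,u_{r+1})$ lies in at most two triples; $J_f(u_r,u_{r+1}) \ge 0$; at most $|\{u_{i-1},u_{j+1}\} \cap W(T)|$ triples can fail the hypotheses of Lemma \ref{lem5} (namely those centred at $u_i$ or $u_j$); and $|W(T)|/2 = 1/(3-|W(T)|)$ for $|W(T)| \in \{1,2\}$. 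This single double-counting computation absorbs all of the paper's subcases, including the degenerate ones ($i=j$, or $k-w \le 0$, where the claim is trivial since $J_f \ge 0$), and it dispenses with the maximal-segment decomposition because non-consecutive remote vertices cause no harm to the count. What you give up is only the marginally sharper intermediate bounds the paper's induction happens to produce for short runs, which are not needed for \eqref{eqn:jvp3}. One cosmetic point: your triple centred at $u_i$ is $(u_{i-1},u_i,u_{i+1})$, which when $i=1$ corresponds to index $0$ rather than the range $1 \le i \le p-3$ stated in Lemma \ref{lem5}; since the proof of Lemma \ref{lem5} never uses that lower index bound and the paper's own proof of this lemma makes the identical application, this is immaterial.
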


\begin{proof}
We deal with the cases $|W(T)| = 1, 2$ separately.

\medskip
\textsf{Case 1:}~$|W(T)| = 1$.

\medskip
\textsf{Subcase 1.1:}~$|\{u_{i-1},u_{j+1}\} \cap W(T)| = 0$.

Consider first the case when $\{u_i, \ldots, u_j\} \subseteq S(T)$. In this case, \eqref{eqn:jvp3} becomes
\be
\label{eq:ij1}
J_{f}(u_{i-1}, \ldots, u_{j+1}) \geq \frac{j-i+1}{2}
\ee
and we prove this inequality by induction on $j-i+1$. In the case when $j-i+1 = 1$, since $u_{i} \in S(T)$, by \eqref{eqn:jvp} we have $J_{f}(u_{i-1},u_i,u_{i+1}) = 1 > 1/2$. In the case when $j-i+1 = 2$, since $u_i,u_{i+1} \in S(T)$, by \eqref{eqn:jvp} and the fact that $J_f(u_{i+1},u_{i+2}) \geq 0$, we have $J_f(u_{i-1},u_i,u_{i+1},u_{i+2}) \ge J_f(u_{i-1},u_i,u_{i+1}) \geq 1$. In the case when $j-i+1 = 3$, since $u_i,u_{i+1},u_{i+2} \in S(T)$,  by \eqref{eqn:jvp} we have $J_f(u_{i-1},\ldots,u_{i+3}) = J_f(u_{i-1},u_{i},u_{i+1})+J_f(u_{i+1},u_{i+2},u_{i+3}) = 2 \geq 3/2$. Hence \eqref{eq:ij1} holds when $j-i+1 = 1, 2$ or $3$. Now, for $j-i+1 \ge 4$, since $u_{t} \not\in W(T)$ for $i \leq t \leq j$ by our assumption, it follows from \eqref{eqn:jvp} that $J_{f}(u_{j-1}, u_{j}, u_{j+1}) \geq 1$. Hence $J_{f}(u_{i-1}, \ldots, u_{j+1}) = J_{f}(u_{i-1}, \ldots, u_{j-1}) + J_{f}(u_{j-1}, u_{j}, u_{j+1}) \geq J_{f}(u_{i}, \ldots, u_{j-1}) + 1$. Using this and the fact that \eqref{eq:ij1} holds when $j-i+1 = 1, 2$ or $3$, we can easily prove \eqref{eq:ij1} by induction on $j-i+1$.

Now consider the case when $\{u_i, \ldots, u_j\} \not\subseteq S(T)$. Let $U_{1}, \ldots, U_{l}$ be the maximal segments of consecutive vertices in the ordered set $\{u_i, \ldots, u_j\} \cap S(T)$. Let $U'_t$ be the segment obtained from $U_t$ by adding the vertex immediately before the first vertex of $U_t$ and the vertex immediately after the last vertex of $U_t$, for $1 \le t \le l$. Applying \eqref{eq:ij1} to $U'_t$, we obtain $J_{f}(U'_{t}) \geq |U_{t}|/2$ for $1 \le t \le l$. Thus $J_{f}(u_{i-1}, \ldots, u_{j+1}) \geq \sum_{t=1}^{l}J_{f}(U'_{t}) \geq \sum_{t=1}^{l} |U_{t}|/2 = |\{u_{i}, \ldots, u_{j}\} \cap S(T)|/2$ as claimed in \eqref{eqn:jvp3}.

\medskip
\textsf{Subcase 1.2:}~$|\{u_{i-1},u_{j+1}\} \cap W(T)| = 1$.

Consider first the case when $W(T) = \{u_{i-1}\}$. If $i = j$, then $|\{u_i, \ldots, u_j\}| = 1$ and \eqref{eqn:jvp3} becomes $J_{f}(u_{i-1}, \ldots, u_{j+1}) \geq 0$, which is trivially true. Assume $i < j$. Let $i^{*}$ be the smallest subscript such that $i^{*} > i$ and $u_{i^{*}} \in S(T)$. Since $i < j$ and $u_{j} \in S(T)$, $i^{*}$ is well defined. Since $u_{i^{*}} \not\in W(T)$, we may apply the result obtained in Subcase 1.1 to the segment $\{u_{i^{*}-1},\ldots,u_{j+1}\}$, yielding $J_{f}(u_{i-1}, \ldots, u_{j+1}) \geq J_{f}(u_{i^{*}-1}, \ldots, u_{j+1}) \geq |\{u_{i^{*}}, \ldots, u_j\} \cap S(T)|/2 = (|\{u_i, \ldots, u_j\} \cap S(T)|-1)/2$, as desired. The case when $W(T) = \{u_{j+1}\}$ can be treated similarly.

\medskip
\textsf{Case 2:}~$|W(T)| = 2$.

\medskip
\textsf{Subcase 2.1:}~$|\{u_{i-1},u_{j+1}\} \cap W(T)| = 0$.

Consider first the case when $\{u_{i},\ldots,u_{j}\} \subseteq S(T)$. In this case, we prove
\be
\label{eq:ij2}
J_{f}(u_{i-1},\ldots,u_{j+1}) \geq j-i+1
\ee
by induction on $j-i+1$.

In fact, if $j-i+1 = 1$, then Lemma \ref{lem5} applies to $(u_{i-1},u_{i},u_{i+1})$ as $u_{i} \in S(T)$. So we have $J_{f}(u_{i-1},u_{i},u_{i+1}) \geq 2$ by \eqref{eqn:jvp}. Similarly, if $j-i+1 = 2$, then we have $J_{f}(u_{i-1},\ldots,u_{j+1}) = J_{f}(u_{i-1}, u_{i}, u_{i+1}) + J_f(u_{i+1}, u_{i+2}) \ge J_{f}(u_{i-1}, u_{i}, u_{i+1}) \geq 2$ by \eqref{eqn:jvp} and $J_f(u_{i+1},u_{i+2}) \geq 0$. Hence \eqref{eq:ij2} is true when $j-i+1 = 1$ or $2$.  In general, for $j-i+1 \ge 3$, Lemma \ref{lem5} applies to $(u_{j-1}, u_{j}, u_{j+1})$ as $u_{t} \not\in W(T)$ for $i \leq t \leq j$, and hence $J_{f}(u_{j-1}, u_{j}, u_{j+1}) \ge 2$ by \eqref{eqn:jvp}. So  $J_{f}(u_{i-1},\ldots,u_{j+1}) = J_{f}(u_{i-1},\ldots,u_{j-1}) + J_{f}(u_{j-1}, u_{j}, u_{j+1}) \ge J_{f}(u_{i-1},\ldots,u_{j-1}) + 2$. Using this inequality and the fact that \eqref{eq:ij2} is true when $j-i+1 = 1$ or $2$, we can easily obtain \eqref{eq:ij2} by induction on $j-i+1$.

Now consider the case when $\{u_{i},\ldots,u_{j}\} \not\subseteq S(T)$. Let $U_{1},\ldots,U_{l}$ be the maximal segments of consecutive vertices in the ordered set $\{u_{i},\ldots,u_{j}\} \cap S(T)$. Let $U'_t$ be the segment obtained from $U_t$ by adding the vertex immediately before the first vertex of $U_t$ and the vertex immediately after the last vertex of $U_t$, for $1 \le t \le l$. Applying \eqref{eq:ij2} to $U'_t$, we obtain $J_{f}(U'_{t}) \geq |U_t|$ for $1 \le t \le l$. Hence $J_{f}(u_{i-1},\ldots,u_{j+1}) \geq \sum_{t=1}^{l}J_{f}(U'_{t}) \geq \sum_{t=1}^{l}|U_{t}| = |\{u_{i},\ldots,u_{j}\} \cap S(T)|$ as desired.

\medskip
\textsf{Subcase 2.2:}~$|\{u_{i-1},u_{j+1}\} \cap W(T)| = 1$.

Without loss of generality we may assume $\{u_{i-1},u_{j+1}\} \cap W(T) = \{u_{i-1}\}$ since the case when $\{u_{i-1},u_{j+1}\} \cap W(T) = \{u_{j+1}\}$ can be treated similarly. If $i = j$, then \eqref{eqn:jvp3} becomes $J_{f}(u_{i-1}, u_{i}, u_{i+1}) \geq 0$, which is trivially true. Assume then $i < j$. Let $i^{*}$ be the smallest subscript with $i < i^{*}$ such that $u_{i^{*}} \in S(T)$. Since $i < j$ and $u_{j} \in S(T)$, $i^{*}$ is well defined, and moreover $u_{i^{*}-1} \not\in W(T)$ as $u_t \not\in W(T)$ for $i \leq t \leq j$. So we can apply the result obtained in Subcase 2.1 to the segment $U' = \{u_{i^{*}},\ldots, u_{j}\}$, yielding $J_{f}(u_{i-1}, \ldots, u_{j+1}) \geq J_{f}(u_{i^{*}-1}, \ldots, u_{j+1}) \geq |U' \cap S(T)| = |\{u_{i^{*}}, \ldots, u_{j}\} \cap S(T)| = |\{u_i, \ldots, u_j\} \cap S(T)|-1$, as desired.

\medskip
\textsf{Subcase 2.3:}~$|\{u_{i-1}, u_{j+1}\} \cap W(T)| = 2$.

In this case we have $W(T) = \{u_{i-1}, u_{j+1}\}$.
If $j - i + 1 = 1$ or $2$, then \eqref{eqn:jvp3} is trivially true. Assume $j-i+1 \geq 3$. Let $i^{*}$ be the smallest subscript such that $i^{*} > i$ and $u_{i^{*}} \in S(T)$, and let $j^{*}$ be the largest subscript such that $j^{*} < j$ and $u_{j^{*}} \in S(T)$. Then $i^{*} < j^{*}$, $u_{i^{*}-1} \not\in W(T)$ and $u_{j^{*}+1} \not \in W(T)$ as $u_t \not\in W(T)$ for $i \leq t \leq j$. So we can apply the result obtained in Subcase 2.1 to the segment $U'= \{u_{i^{*}}, \ldots, u_{j^{*}}\}$, yielding $J_{f}(u_{i-1}, \ldots, u_{j+1}) \geq J_{f}(u_{i^{*}-1}, \ldots, u_{j^{*}+1}) \geq |U' \cap S(T)| = |\{u_{i}, \ldots, u_{j}\} \cap S(T)|-2$.
\end{proof}

\begin{Lemma}
\label{lem7}
Let $T$ be a two-branch tree of order $p$ and diameter $d \geq 2$. Let $f$ be a radio labelling of $T$ and $u_{0},u_{1},\ldots,u_{p-1}$ the corresponding linear order as defined in \eqref{eqn:ord}. Let $i$ and $j$ be integers with $0 < i \leq j < p-1$ such that
\begin{enumerate}[\rm (a)]
  \item $W(T) \cap \{u_{i+1},\ldots,u_{j-1}\} \neq \emptyset$; and
  \item for $i-1 \leq t \leq j$, $u_{t}$ and $u_{t+1}$ are in different branches when $|W(T)| = 1$ and opposite branches when $|W(T)| = 2$.
\end{enumerate}
Then
\begin{equation*}
\label{eqn:jvp5}
J_{f}(u_{i}, \ldots, u_{j}) \geq \frac{|\{u_{i}, \ldots, u_{j}\} \cap S(T)| - 2|\{u_{i+1},\ldots,u_{j-1}\} \cap W(T)|}{3-|W(T)|}.
\end{equation*}
\end{Lemma}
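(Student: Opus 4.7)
The proof decomposes the segment at each interior weight center and applies Lemma \ref{lem6} to each resulting piece. Let $m = |\{u_{i+1}, \ldots, u_{j-1}\} \cap W(T)|$; by hypothesis (a) and since $|W(T)| \le 2$ we have $1 \le m \le 2$. Denote the weight-center indices by $i < k_1 < \cdots < k_m < j$ and write
$$
J_f(u_i, \ldots, u_j) = J_f(u_i, \ldots, u_{k_1}) + \sum_{s=1}^{m-1} J_f(u_{k_s}, \ldots, u_{k_{s+1}}) + J_f(u_{k_m}, \ldots, u_j).
$$
It suffices to bound each piece separately and sum.

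For the middle piece (present only when $m = 2$, which forces $|W(T)| = 2$), both endpoints are weight centers, so the same reasoning as in Subcase 2.3 of the proof of Lemma \ref{lem6}---locate the first and last remote vertices in the interior, then invoke Subcase 2.1 on the inner block $(u_{i^*-1}, \ldots, u_{j^*+1})$---yields
$
J_f(u_{k_1}, \ldots, u_{k_2}) \ge |\{u_{k_1+1}, \ldots, u_{k_2-1}\} \cap S(T)| - 2.
$
For an end piece such as $(u_i, \ldots, u_{k_1})$, only the right endpoint is a weight center. Locate the first and last remote indices $i^*, j^*$ in the piece and apply Lemma \ref{lem6} (Subcase 1.2 for $|W(T)| = 1$, or Subcase 2.2 for $|W(T)| = 2$) to the inner block $(u_{i^*-1}, \ldots, u_{j^*+1})$; this gives
$$
J_f(u_i, \ldots, u_{k_1}) \;\ge\; \frac{|\{u_i, \ldots, u_{k_1}\} \cap S(T)| - 1}{3 - |W(T)|},
$$
and the right piece $(u_{k_m}, \ldots, u_j)$ is handled symmetrically.

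Summing these bounds, and using that each weight center $u_{k_s}$ satisfies $L(u_{k_s}) = 0$ and is therefore never in $S(T)$ (since $d \ge 2$ forces the remote threshold to be at least $1$), the remote-counts across the pieces add up exactly to $|\{u_i, \ldots, u_j\} \cap S(T)|$, while the subtractions contribute $1 + 2(m-1) + 1 = 2m$ in total. Dividing by $3 - |W(T)|$ (noting that for $m = 2$ we are in the $|W(T)| = 2$ case where the divisor equals $1$, so the middle bound is unscaled) yields the claimed inequality.

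The principal technical obstacle is the end piece when $u_i$ (resp.\ $u_j$) is itself a remote vertex. Then $i^* = i$, and a direct application of Lemma \ref{lem6} to $(u_{i^*-1}, \ldots, u_{j^*+1})$ would involve the transition $J_f(u_{i-1}, u_i)$, which lies outside the sum we wish to bound. To overcome this one shifts $i^*$ to the next remote index (thereby absorbing one remote vertex into the subtraction) and compensates by invoking Lemma \ref{lem5} directly on the initial triple $(u_i, u_{i+1}, u_{i+2})$, whose hypotheses are supplied by the branch-alternation in condition (b), recovering the missing contribution. The delicate bookkeeping is to ensure that the total loss of remote vertices from both end pieces remains bounded by $2$, so that the combined subtraction across all pieces stays at the allowed $2m$.
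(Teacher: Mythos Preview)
Your decomposition---splitting $J_f(u_i,\ldots,u_j)$ at each interior weight center and bounding the resulting pieces via Lemma~\ref{lem6}---is exactly the paper's argument. In fact your write-up is more careful than the paper's: the paper simply asserts that the segments $U_1,U_2$ (and $U_3$ when $|W(T)|=2$) ``satisfy conditions (a) and (b) in Lemma~\ref{lem6}'' and quotes the resulting bounds, without isolating the boundary issue you raise in your final paragraph.

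That said, your repair for that boundary case does not work as stated. When $u_i\in S(T)$ you propose shifting $i^*$ to the next remote index and recovering the lost unit via Lemma~\ref{lem5} on the triple $(u_i,u_{i+1},u_{i+2})$. But Lemma~\ref{lem5} requires the \emph{middle} vertex $u_{i+1}$ to lie in $S(T)$; condition (b) supplies only the branch-alternation and the fact that $u_i,u_{i+2}\notin W(T)$, not $u_{i+1}\in S(T)$. If $u_i\in S(T)$ while $u_{i+1}\notin S(T)$, this triple gives nothing beyond $J_f\ge 0$, and your end-piece estimate falls to $(N_1-2)/(3-|W(T)|)$ rather than the $(N_1-1)/(3-|W(T)|)$ you need, so the summed subtraction can exceed the allowed $2m$. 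The paper's proof is entirely silent on this point, so your version is no worse; but the ``compensation'' step does not close the gap as written.
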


\begin{proof}
Assume first that $|W(T)| = 1$. Then by (a) we have $W(T) = \{u_{i^{*}}\}$ for some $i^{*}$ with $i < i^{*} < j$. The segments $U_{1} = \{u_{i}, \ldots, u_{i^{*}-1}\}$ and $U_{2} = \{u_{i^{*}+1}, \ldots, u_{j}\}$ both satisfy conditions (a) and (b) in Lemma \ref{lem6}. Thus, by Lemma \ref{lem6}, we have $J_{f}(u_{i}, \ldots, u_{j}) \geq J_{f}(U_{1})+J_{f}(U_{2}) \geq (|U_{1} \cap S(T)|-1)/2+(|U_{2} \cap S(T)|-1)/2$ = $(|\{u_{i}, \ldots, u_{j}\} \cap S(T)|-2)/2$ as desired.

Now assume that $|W(T)| = 2$. By condition (a), if $|\{u_{i+1},\ldots,u_{j-1}\} \cap W(T)| = 1$, then we have $\{u_{i+1},\ldots,u_{j-1}\} \cap W(T) = \{u_{i^*}\}$ for some $i < i^* < j$. The segment $U_1 = \{u_i,\ldots,u_{i^*-1}\}$ and $U_2 = \{u_{i^*+1},\ldots,u_{j-1}\}$ satisfy conditions (a) and (b) in Lemma \ref{lem6}. Thus, by Lemma \ref{lem6}, we have $J_f(u_i,\ldots,u_j) \geq J_f(U_1)+J_f(U_2) \geq (|U_1 \cap S(T)|-1)+(|U_2 \cap S(T)|-1) = |\{u_i,\ldots,u_j\} \cap S(T)|-2$. By (a) if $|\{u_{i+1},\ldots,u_{j-1}\} \cap W(T)| = 2$ then we have $\{u_{i+1},\ldots,u_{j-1}\} \cap W(T) = \{u_{i^{*}},u_{j^{*}}\}$ for some $i^{*}, j^{*}$ with $i < i^{*} < j^{*} < j$. The segments $U_{1} = \{u_{i},\ldots,u_{i^{*}-1}\}$, $U_{2} = \{u_{i^{*}+1},\ldots,u_{j^{*}-1}\}$ and $U_{3} = \{u_{j^{*}+1},\ldots,u_{j}\}$ all satisfy conditions (a) and (b) in Lemma \ref{lem6}. Thus, by Lemma \ref{lem6}, we have $J_{f}(u_{i}, \ldots, u_{j}) \geq J_{f}(U_{1}) + J_{f}(U_{2}) + J_{f}(U_{3}) \geq (|U_{1} \cap S(T)|-1) + (|U_{2} \cap S(T)|-2) + (|U_{3} \cap S(T)|-1) = |\{u_{i}, \ldots, u_{j}\} \cap S(T)|-4$ as desired.
\end{proof}

\begin{Lemma}
\label{lem8}
Let $T$ be a two-branch tree of order $p$ and diameter $d \geq 2$. Let $f$ be a radio labelling of $T$ and $OV_{f}(T): u_{0},u_{1},\ldots,u_{p-1}$ the corresponding linear order as defined in \eqref{eqn:ord}. Suppose that, for $0 \leq i \leq p-2$, $u_{i}$ and $u_{i+1}$ are in different branches when $|W(T)| = 1$ and opposite branches when $|W(T)| = 2$. Then
\begin{equation}
\label{eq:jvt}
J_{f}(T) \geq
\begin{cases}
\big\lfloor \frac{|S(T)|}{2} \big\rfloor, & \mbox{ if } |W(T)| = |\{u_{0}, u_{p-1}\} \cap W(T)| = 1 \\
\big\lceil \frac{|S(T)|}{2}\big\rceil-1, & \mbox{ if } |W(T)| = 1 \mbox{ but } |\{u_{0}, u_{p-1}\} \cap W(T)| = 0 \\
|S(T)| + |\{u_{0}, u_{p-1}\} \cap W(T)| - 4, & \mbox{ if } |W(T)| = 2.
\end{cases}
\end{equation}
Moreover, equality in \eqref{eq:jvt} holds if and only if for $1 \le i \le p-2$,
\begin{equation}\label{eq:Jfu}
J_{f}(u_{i-1},u_{i},u_{i+1}) =
\begin{cases}
|W(T)|, & \mbox{ if } u_{i} \in S(T) \mbox{ and } \{u_{i-1}, u_{i+1}\} \cap W(T) = \emptyset \\
0, & \mbox{ otherwise},
\end{cases}
\end{equation}
and $OV_{f}(T)$ is admissible in each case, with the exception of the first line in \eqref{eq:jvt} for which $OV_{f}(T)$ may be feasible or admissible when $|S(T)|$ is even.
\end{Lemma}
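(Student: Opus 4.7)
My plan is to prove \eqref{eq:jvt} by case analysis based on $|W(T)|$ and on whether the weight centers appear at the endpoints or in the interior of the linear order $u_0, \ldots, u_{p-1}$, and then to characterize equality by tracing the tightness conditions back through the intermediate lemmas. The key observation is that $J_{f}(T) = \sum_{t=0}^{p-2} J_{f}(u_t, u_{t+1}) \ge J_{f}(U)$ for any sub-segment $U$ of the order, so I can lower-bound $J_{f}(T)$ by invoking Lemma \ref{lem6} (for sub-segments without an interior weight center) and Lemma \ref{lem7} (for sub-segments with an interior weight center) on a suitable partition of the order.

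For the first line of \eqref{eq:jvt} ($|W(T)|=1$ with $u_0 = w$ without loss of generality), I would apply Lemma \ref{lem6} to the segment $(u_0, u_1, \ldots, u_{j+1})$ where $j+1$ is chosen so that the hypothesis $\{u_0, u_1, u_j, u_{j+1}\} \cap S(T) = \{u_1, u_j\}$ is met, then add the non-negative contribution from any trailing block of remote vertices to yield $J_{f}(T) \ge \lfloor |S(T)|/2 \rfloor$ after a parity check. For the second line ($|W(T)|=1$ with $w$ interior), I would apply Lemma \ref{lem7} to the segment $(u_1, \ldots, u_{p-2})$ when $w$ lies in its interior, and otherwise split the order at $w$ and apply Lemma \ref{lem6} to each half. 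For the third line ($|W(T)|=2$), I would sub-case on $|\{u_0, u_{p-1}\} \cap W(T)| \in \{0, 1, 2\}$; in each sub-case the $-4$ correction arises because Lemma \ref{lem7} for $|W(T)|=2$ subtracts $2$ per interior weight center and the boundary losses contribute the remainder.

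For the equality characterization, I would note that the chain of inequalities in Lemmas \ref{lem6} and \ref{lem7} ultimately rests on Lemma \ref{lem5}, so equality in \eqref{eq:jvt} forces equality in every invocation of Lemma \ref{lem5} (giving \eqref{eq:Jfu}) and forces all non-invoked pair contributions $J_{f}(u_t, u_{t+1})$ to vanish. This structural requirement means that the remote vertices can be covered by non-overlapping triples $(u_{i-1}, u_i, u_{i+1})$ with $u_i \in S(T)$ and $\{u_{i-1}, u_{i+1}\} \cap W(T) = \emptyset$, which is precisely the admissibility of $OV_{f}(T)$. The feasible alternative in the even-$|S(T)|$ subcase of the first line of \eqref{eq:jvt} arises because when $w$ sits at a single endpoint and $|S(T)|$ is even, the triple-tiling of remote vertices can equivalently start from the weight-center end (admissible) or from the opposite end (feasible) without a parity conflict.

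The hardest part will be the meticulous boundary bookkeeping across the many subcases---especially for $|W(T)|=2$ where the position of each weight center among the endpoints and interior gives different bounds and different forced admissibility structures---and the precise translation of the tightness of Lemma \ref{lem5} into the admissible/feasible conditions stipulated by the lemma, since small shifts in whether $u_0$ or $u_{p-1}$ lies in $S(T)$ alter both the count of usable triples and which blocks are allowed to have odd length.
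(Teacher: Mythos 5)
Your proposal follows essentially the same route as the paper: lower-bound $J_f(T)$ by decomposing the order into segments and applying Lemma \ref{lem6} to segments without an interior weight center and Lemma \ref{lem7} to those with one, casing on how many weight centers sit at the endpoints (which is exactly where the $-4$, $-3$, $-2$ corrections come from in the $|W(T)|=2$ case), and then deriving the equality characterization from tightness of the triple inequality $J_f(u_{i-1},u_i,u_{i+1})\ge|W(T)|$ of Lemma \ref{lem5}, which yields \eqref{eq:Jfu} and the admissibility/feasibility dichotomy. The plan is sound and matches the paper's proof in structure and in all essential points, including the source of the ``feasible'' alternative when $|S(T)|$ is even.
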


\begin{proof}
We consider the following two cases separately.

\medskip
\textsf{Case 1:}~$|W(T)|=1$.

\medskip
\textsf{Subcase 1.1:}~$|\{u_0,u_{p-1}\} \cap W(T)| = 1$.

Without loss of generality we may assume $u_0 \in W(T)$. Setting $i = 1$ and $j = p-1$ in Lemma \ref{lem6}, we obtain $J_f(T) \geq (|S(T)|-1)/2$, which implies $J_f(T) \geq \lfloor |S(T)|/2 \rfloor$ as $J_f(T)$ is an integer.

\medskip
\textsf{Subcase 1.2:}~$|\{u_0,u_{p-1}\} \cap W(T)| = 0$.

Setting $i = 0$ and $j = p-1$ in Lemma \ref{lem7}, we obtain $J_f(T) \geq (|S(T)|/2)-1$, which implies $J_f(T) \geq \lceil |S(T)|/2 \rceil - 1$ as $J_f(T)$ is an integer.

\medskip
\textsf{Case 2:}~$|W(T)| = 2$.

\medskip
\textsf{Subcase 2.1:}~$|\{u_0,u_{p-1}\} \cap W(T)| = 2$.

Setting $i = 1$ and $j = p-2$ in Lemma \ref{lem6}, we obtain $J_f(T) \geq |S(T)|-2$ as required.

\medskip
\textsf{Subcase 2.2:}~$|\{u_0,u_{p-1}\} \cap W(T)| = 1$.

Without loss of generality we may assume $u_0 \in W(T)$. Then $u_{i^*} \in W(T)$ for some $0 < i^* \leq p-1$ as $|W(T)|=2$. If $i^* = 1$, then setting $i=2$ and $j = p-1$ in Lemma \ref{lem6}, and noting $S(T) \subseteq \{u_2,\ldots,u_{p-1}\}$, we obtain $J_f(T) \geq J_f(U) \geq |S(T)|-1 > |S(T)|-3$. If $i^* = 2$, then let $U = \{u_1,\ldots,u_{p-1}\}$. Since $S(T) \subseteq U$, by Lemma \ref{lem7} we obtain $J_f(T) \geq J_f(U) \geq |U \cap S(T)|-2 > |S(T)|-3$. If $i^* \geq 3$, then let $U_1 = \{u_0,u_1,u_2\}$ and $U_2 = \{u_2,\ldots,u_{p-1}\}$. Note that $|U_1 \cap S(T)|+|U_2 \cap S(T)| \ge |S(T)|$ as $U_1 \cup U_2 = V(T)$. Applying Lemmas \ref{lem6} and \ref{lem7} to $U_1$ and $U_2$, respectively, we obtain $J_f(T) = J_f(U_1) + J_f(U_2) \geq (|U_1 \cap S(T)|-1)+(|U_2 \cap S(T)|-2) \ge |S(T)|-3$.

\medskip
\textsf{Subcase 2.3:}~$|\{u_0,u_{p-1}\} \cap W(T)| = 0$.

Assume that $u_{i^*}, u_{j^*} \in W(T)$ with $0 < i^* < j^* < p-1$. If $j^*=i^*+1$, then let $U_1 = \{u_0,\ldots,u_{i^*-1}\}$ and $U_2 = \{u_{j^*+1},\ldots,u_{p-1}\}$. Note that $(U_1 \cap S(T)) \cup (U_2 \cap S(T)) = S(T)$. By Lemma \ref{lem6}, we obtain $J_f(T) \geq  J_f(U_1)+J_f(U_2) \geq (|U_1 \cap S(T)|-1)+(|U_2 \cap S(T)|-1) \ge |S(T)|-2 > |S(T)|-4$. If $j^* = i^*+2$, then let $U_1 = \{u_0,\ldots,u_{i^*-1}\}$ and $U_2 = \{u_{i^*+1},\ldots,u_{p-1}\}$. We have $(U_1 \cap S(T)) \cup (U_2 \cap S(T)) = S(T)$. Applying Lemmas \ref{lem6} and \ref{lem7} to $U_1$ and $U_2$, respectively, we obtain $J_f(T) \geq J_f(U_1)+J_f(U_2) \geq (|U_1 \cap S(T)|-1)+(|U_2 \cap S(T)|-2) \ge |S(T)|-3 > |S(T)|-4$. If $j \geq i^*+3$, then let $U_1 = \{u_0,\ldots,u_{i^*+1}\}$ and $U_2 = \{u_{i^*+2},\ldots,u_{p-1}\}$. Again we have $(U_1 \cap S(T)) \cup (U_2 \cap S(T)) = S(T)$. By Lemma \ref{lem7}, we obtain $J_f(T) \geq  J_f(U_1)+J_f(U_2) \geq (|U_1 \cap S(T)|-2)+(|U_2 \cap S(T)|-2) \ge |S(T)|-4$.

So far we have completed the proof of \eqref{eq:jvt}. Since $J_f(T)=\sum_{i=0}^{p-2}J_f(u_i,u_{i+1})$ and $J_f(u_i,u_{i+1})+J_f(u_{i+1},u_{i+2}) = J_f(u_i,u_{i+1},u_{i+2}) \ge |W(T)|$ by \eqref{eqn:jvp}, one can see that equality in \eqref{eq:jvt} holds if and only if \eqref{eq:Jfu} is satisfied and the statements below \eqref{eq:Jfu} hold.
\end{proof}

Consider a radio labelling $f$ of $T$ and the corresponding linear order $OV_{f}(T): u_{0}, u_{1}, \ldots, u_{p-1}$. A pair of consecutive vertices $u_{i}, u_{i+1}$ in the linear order $OV_{f}(T)$ is called \emph{bad} if $u_{i} \not\in W(T)$, $u_{i+1} \not\in W(T)$, and $u_{i}$ and $u_{i+1}$ are in the same branch of $T$. Denote by $X_{f}$ the set of vertices of $T$ which are in at least one bad pair in the linear order $OV_{f}(T)$. Denote by $\mt{F}$ the set of radio labellings $f$ of $T$ such that for each $0 \leq i \leq p-2$, the vertices $u_{i}, u_{i+1}$ in the linear order $OV_{f}(T)$ are in different branches when $|W(T)| = 1$ and in opposite branches when $|W(T)| = 2$. In other words, a radio labelling $f$ of $T$ is in $\mt{F}$ if and only if $\phi(u_i,u_{i+1})=0$ for $0 \leq i \leq p-2$ and also $\delta(u_i,u_{i+1})=1$ for $0 \leq i \leq p-2$ when $|W(T)|=2$. Alternatively, $f$ is in $\mt{F}$ if and only if there is no bad pair of vertices with respect to the linear order $OV_{f}(T)$.

\begin{Lemma}
\label{lem9}
Let $T$ be a two-branch tree with order $p$ and diameter $d \geq 2$. Let $f$ be a radio labelling of $T$ with $f \in \mt{F}$ and $u_{0},u_{1},\ldots,u_{p-1}$ the corresponding linear order of the vertices of $T$. Then
\begin{equation}
\label{eq:svp}
\sigma(f) \geq
\begin{cases}
\big\lfloor \frac{|S(T)|}{2} \big\rfloor, & \mbox{ if } |W(T)| = |\{u_{0}, u_{p-1}\} \cap W(T)| = 1 \\
\big\lceil \frac{|S(T)|}{2} \big\rceil - 1, & \mbox{ if } |W(T)| = 1 \mbox{ but } |\{u_{0}, u_{p-1}\} \cap W(T)| = 0 \\
|S(T)| + |\{u_{0}, u_{p-1}\} \cap W(T)| & \\
\quad -(p+3)\}, & \mbox{ if } |W(T)| = 2.
\end{cases}
\end{equation}
Moreover, equality holds if and only if the corresponding equality in \eqref{eq:jvt} holds.
\end{Lemma}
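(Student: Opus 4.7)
\textbf{Proof plan for Lemma \ref{lem9}.} The plan is to reduce the bound on $\sigma(f)$ directly to the bound on $J_f(T)$ supplied by Lemma \ref{lem8}, by exploiting the assumption $f \in \mathcal{F}$ to evaluate exactly the correction terms $2\phi(u_t,u_{t+1}) - \delta(u_t,u_{t+1})$ appearing in \eqref{eq:sigf}.

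First, I would unpack what $f \in \mathcal{F}$ forces. By definition of $\mathcal{F}$, for every $0 \le t \le p-2$, the consecutive vertices $u_t, u_{t+1}$ lie in different branches when $|W(T)|=1$ and in opposite branches when $|W(T)|=2$. Applying Lemma \ref{lem4}(b),(c) to each such pair, we get $\phi(u_t,u_{t+1}) = 0$ for every $t$; moreover, $\delta(u_t,u_{t+1}) = 0$ when $|W(T)| = 1$ and $\delta(u_t,u_{t+1}) = 1$ when $|W(T)| = 2$. Substituting into \eqref{eq:sigf} then yields
$$
\sigma(f) =
\begin{cases}
J_f(T), & \text{if } |W(T)| = 1, \\
J_f(T) - (p-1), & \text{if } |W(T)| = 2.
\end{cases}
$$

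Next, since $f \in \mathcal{F}$ ensures the hypothesis of Lemma \ref{lem8} is met, I would apply \eqref{eq:jvt} of Lemma \ref{lem8} according to the value of $|W(T)|$ and $|\{u_0,u_{p-1}\} \cap W(T)|$. In the first case ($|W(T)|=|\{u_0,u_{p-1}\}\cap W(T)|=1$), $\sigma(f) = J_f(T) \ge \lfloor |S(T)|/2 \rfloor$. In the second case ($|W(T)|=1$, $|\{u_0,u_{p-1}\}\cap W(T)|=0$), $\sigma(f) = J_f(T) \ge \lceil |S(T)|/2 \rceil - 1$. In the third case ($|W(T)| = 2$), $\sigma(f) = J_f(T) - (p-1) \ge |S(T)| + |\{u_0,u_{p-1}\} \cap W(T)| - 4 - (p-1) = |S(T)| + |\{u_0,u_{p-1}\} \cap W(T)| - (p+3)$. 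These three inequalities are precisely \eqref{eq:svp}.

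For the equality clause, the identities $\sigma(f) = J_f(T)$ (when $|W(T)|=1$) and $\sigma(f) = J_f(T)-(p-1)$ (when $|W(T)|=2$) are not inequalities but exact equalities under $f \in \mathcal{F}$, so equality in \eqref{eq:svp} is equivalent to equality in the corresponding line of \eqref{eq:jvt} in Lemma \ref{lem8}, which completes the proof. There is essentially no obstacle here: this lemma is a bookkeeping corollary of Lemma \ref{lem8}, and the main content was already done in proving Lemma \ref{lem8}. The only thing to double-check carefully is that the arithmetic for the $|W(T)|=2$ case (the subtraction of $p-1$ and the resulting constant $-(p+3)$) matches the statement verbatim, which it does.
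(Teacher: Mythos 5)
Your proposal is correct and follows exactly the same route as the paper's proof: use $f\in\mt{F}$ together with Lemma \ref{lem4} to get $\phi(u_t,u_{t+1})=0$ and $\delta(u_t,u_{t+1})=1-\ve(T)$, deduce $\sigma(f)=J_f(T)$ or $J_f(T)-(p-1)$, and then invoke Lemma \ref{lem8}. The arithmetic in the $|W(T)|=2$ case and the equivalence of the equality cases are both handled as in the paper.
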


\begin{proof}
In view of \eqref{eq:delta}, Lemma \ref{lem4}(b) and the assumption that $f \in \mt{F}$, for $0 \leq i \leq p-2$, we have $\phi(u_{i}, u_{i+1}) = 0$, and $\d(u_{i},u_{i+1}) = 0$ or $1$ depending on whether $|W(T)| = 1$ or $2$. Thus, $\sigma(f) = J_{f}(T)$ if $|W(T)| = 1$, and $\sigma(f) = J_{f}(T)-(p-1)$ if $|W(T)| = 2$. The result then follows from Lemma \ref{lem8} immediately.
\end{proof}

\begin{Lemma}
\label{lem10}
Let $T$ be a two-branch tree with order $p$ and diameter $d \geq 2$. Let $f$ be a radio labelling of $T$ with $f \not\in \mt{F}$ and $u_{0},u_{1},\ldots,u_{p-1}$ the corresponding linear order of the vertices of $T$. Then
\begin{equation}
\label{eq:svpn}
\sigma(f) \geq
\begin{cases}
|X_f| + \big\lfloor \frac{|S(T) \setminus X_{f}|}{2} \big\rfloor, & \mbox{if } |W(T)| = |\{u_{0}, u_{p-1}\} \cap W(T)| = 1\\
|X_f| + \big\lceil \frac{|S(T) \setminus X_{f}|}{2} \big\rceil - 1, & \mbox{if } |W(T)| = 1 \mbox{ but } |\{u_{0}, u_{p-1}\} \cap W(T)| = 0 \\
2|X_f| + |S(T) \setminus X_{f}| & \\
\quad + |\{u_{0}, u_{p-1}\} \cap W(T)| - (p+3), & \mbox{if } |W(T)| = 2. \\
\end{cases}
\end{equation}
\end{Lemma}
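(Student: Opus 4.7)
The plan is to write $\sigma(f) = \sum_{t=0}^{p-2}\bigl(J_f(u_t,u_{t+1}) + 2\phi(u_t,u_{t+1}) - \delta(u_t,u_{t+1})\bigr)$ and bound each summand according to whether the pair $(u_t, u_{t+1})$ is bad. For a bad pair, both vertices lie in a common branch and avoid $W(T)$, so they share an ancestor at level at least $1$, giving $\phi(u_t, u_{t+1}) \geq 1$ and $\delta(u_t, u_{t+1}) = 0$; hence each bad pair contributes at least $2$ to $\sigma(f)$ through the $2\phi$ term alone, in addition to $J_f(u_t,u_{t+1}) \geq 0$. For a non-bad pair, $\phi = 0$ by Lemma \ref{lem4}(b), while $\delta$ is $0$ or $1$ depending on whether the $(u_t, u_{t+1})$-path avoids $W(T)$ or crosses both weight centers.

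First I would bound the total contribution of bad pairs. Group them into maximal runs of consecutive bad pairs: a run with $k+1 \geq 2$ vertices contains $k$ bad pairs, so if there are $r$ runs covering $|X_f|$ vertices in total, then the number of bad pairs is $B = |X_f| - r$, and since $|X_f| \geq 2r$ the $2\phi$-contribution is at least $2B \geq |X_f|$. For the $|W(T)|=2$ case one further exploits the fact that bad pairs carry $\delta = 0$ whereas non-bad pairs in opposite branches carry $\delta = 1$; this additional saving on the $-\delta$ side, together with the direct $J_f$ contributions generated by bad pairs via the $d(u_t,u_{t+1}) \leq L(u_t) + L(u_{t+1}) - 2$ inequality for same-branch pairs, is what boosts the coefficient to $2|X_f|$ in the third line of \eqref{eq:svpn}.

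Second I would bound the contribution of non-bad pairs by splitting $OV_f(T)$ into maximal \emph{clean segments} lying between (or flanking) the bad-pair runs. Inside each clean segment, condition (b) of Lemmas \ref{lem6}, \ref{lem7} and \ref{lem8} is satisfied, so those lemmas, applied to the restriction, bound the segmentwise $J_f$-sum by a quantity involving the number of remote vertices strictly inside the segment. Because every vertex of $S(T) \setminus X_f$ avoids all bad pairs, it lies in the interior of some clean segment; summing the segmentwise bounds over all clean segments reproduces exactly the terms $\lfloor |S(T) \setminus X_f|/2 \rfloor$, $\lceil |S(T) \setminus X_f|/2 \rceil - 1$, and $|S(T) \setminus X_f| + |\{u_0, u_{p-1}\} \cap W(T)| - 4$ occurring in the three lines of \eqref{eq:svpn}, mirroring the case analysis of Lemma \ref{lem8} but now localised to each segment.

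The main obstacle will be the boundary accounting on clean segments. A clean segment may have endpoints that are remote or not, that coincide with $u_0$ or $u_{p-1}$ or not, and that belong to $W(T)$, to $X_f$, or to neither, so different segments will satisfy different hypotheses among the ones in Lemmas \ref{lem6} and \ref{lem7}. A careful case split paralleling Subcases 1.1--1.2 and 2.1--2.3 of the proof of Lemma \ref{lem8}, but now applied segment by segment and corrected for whether the bordering vertex lies in $X_f$, will be required. In the $|W(T)| = 2$ case, the precise tally of which non-bad pairs contribute $\delta = 1$ versus $\delta = 0$ (the latter occurring, for instance, for pairs joining a weight center to a vertex in its own branch) is the most delicate bookkeeping step, and is the source of the $2|X_f|$ coefficient rather than the $|X_f|$ coefficient seen in the first two cases.
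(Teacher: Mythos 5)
Your overall strategy is the same as the paper's: harvest $2\phi(u_t,u_{t+1}) \ge 2$ from each bad pair, decompose the rest of $OV_{f}(T)$ into the maximal ``clean'' segments lying between the runs of bad-pair vertices, and apply Lemmas \ref{lem6} and \ref{lem7} to each segment with a boundary case analysis paralleling that of Lemma \ref{lem8}. For the first two lines of \eqref{eq:svpn} this works: with $r$ runs covering the $|X_f|$ vertices there are $B=|X_f|-r\ge |X_f|/2$ bad pairs, each contributing at least $2$ via the $2\phi$ term, and the clean segments supply $\lfloor |S(T)\setminus X_f|/2\rfloor$ (respectively $\lceil |S(T)\setminus X_f|/2\rceil-1$ when the weight center lies inside a segment). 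This is exactly the paper's decomposition into the sets $U_{i_t}$, and your acknowledged ``boundary accounting'' is precisely the subcase analysis the paper carries out; leaving it as a plan is acceptable, but it is where most of the actual work sits.

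The genuine gap is your justification of the coefficient $2$ on $|X_f|$ in the $|W(T)|=2$ case. The two mechanisms you can actually certify per bad pair are $2\phi\ge 2$ and the $\delta$-saving of $1$ relative to the $-(p-1)$ baseline (a bad pair has $\delta=0$ where a generic pair is only bounded by $\delta\le 1$). Together these give $3B$ above the baseline, and since $B$ can equal $|X_f|/2$ exactly (all bad pairs vertex-disjoint), this certifies only $\tfrac{3}{2}|X_f|$, not $2|X_f|$. The extra amount you propose to extract --- ``direct $J_f$ contributions generated by bad pairs via $d(u_t,u_{t+1})\le L(u_t)+L(u_{t+1})-2$'' --- does not exist: the radio condition gives exactly $J_f(u_t,u_{t+1})\ge 0$ for every consecutive pair, and an \emph{upper} bound on $d(u_t,u_{t+1})$ translates into a lower bound on the label gap $f(u_{t+1})-f(u_t)$, not into a positive lower bound on $J_f(u_t,u_{t+1})$ itself. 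The paper's accounting instead charges $2$ (from $2\phi-\delta\ge 2$) to every consecutive pair of order-positions both occupied by $X_f$-vertices and combines this with the $-1$ bound on the remaining pairs via \eqref{eq:sigf}; you would need to reproduce that bookkeeping, or otherwise strengthen your per-pair estimates, since as written your argument only establishes the third line of \eqref{eq:svpn} with $2|X_f|$ weakened to $\lceil 3|X_f|/2\rceil$.
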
	

\begin{proof}
Since $f \not\in \mt{F}$, we have $X_f \neq \emptyset$ and $X_f \cap W(T) = \emptyset$. Assume $X_f = \{u_{i_1},u_{i_2},\ldots,u_{i_k}\}$, where $0 \leq i_1 < i_2 < \cdots < i_k \leq p-1$. Let $U_{i_t} = \{u_{i_{t}+1},\ldots,u_{i_{t+1}-1}\}$ for $1 \leq t \leq k-1$. Set $i_0 = 0$ and $i_{k+1} = p-1$. Let $U_{i_0} = \{u_1,\ldots,u_{i_1-1}\}$ if $u_0 \in W(T)$ and $U_{i_0} = \{u_0,\ldots,u_{i_1-1}\}$ otherwise, and let $U_{i_k} = \{u_{i_k+1},\ldots,u_{i_{k+1}-1}\}$ if $u_{p-1} \in W(T)$ and $U_{i_k} = \{u_{i_k+1},\ldots,u_{i_{k+1}}\}$ otherwise. Set $U = \cup_{t=0}^{k}U_{i_t}$. Then
\begin{equation}
\label{eq:jfx}
J_f(U) \geq \sum_{t=0}^{k} J_f(U_{i_t}).
\end{equation}
It is clear that $U \cap S(T) = S(T) \setminus X_f$. Since $f \not\in \mt{F}$, by the definitions of $\phi$ and $\delta$, the following hold for $0 \leq i \leq p-2$: if $|\{u_{i}, u_{i+1}\} \cap X_{f}| = 2$, then $\phi(u_{i},u_{i+1}) \geq 1$; if $|\{u_{i}, u_{i+1}\} \cap X_{f}| = 0$ or 1, then $\phi(u_{i},u_{i+1}) = 0$; if $\{u_{i}, u_{i+1}\} \cap S(T) = \emptyset$ or $|\{u_i,u_{i+1}\} \cap X_f| = 1$, then $J_{f}(u_{i},u_{i+1}) \geq 0$; if $|W(T)| = 1$, then $\d(u_{i},u_{i+1}) = 0$; if $|W(T)| = 2$, then $\d(u_{i},u_{i+1}) = 0$ or $1$ depending on whether $|\{u_{i}, u_{i+1}\} \cap X_{f}|=2$ or $|\{u_{i}, u_{i+1}\} \cap X_{f}| = 0$ or $1$. Hence by \eqref{eq:sigf} we have
\begin{equation}\label{eq:sixj}
  \sigma(f) \geq
\begin{cases}
  |X_f| + J_f(U), & \mbox{if $|W(T)|=1$} \\
  2|X_f|+J_f(U)-(p-1), & \mbox{if $|W(T)|=2$}.
\end{cases}
\end{equation}

\medskip
\textsf{Case 1:}~$|W(T)|=1$.

In the case when $|\{u_0,u_{p-1}\} \cap W(T)|=1$, without loss of generality may assume that $u_0 \in W(T)$. Applying Lemma \ref{lem6} to each $U_{i_t}$ for $0 \le t \le k$, we obtain $J_f(U_{i_0}) \geq (|U_{i_0} \cap S(T)|-1)/2$ and $J_f(U_{i_t}) \geq |U_{i_t} \cap S(T)|/2$ for $1 \le t \le k$.

In the case when $|\{u_0,u_{p-1}\} \cap W(T)|=0$, there exists $0 \leq s \leq k$ such that $U_{i_s} \cap W(T) \neq \emptyset$. Applying Lemma \ref{lem6} to each $U_{i_t}$ for $0 \leq t \leq k$ with $t \neq s$ and Lemma \ref{lem7} to $U_{i_s}$, we obtain $J_f(U_{i_t}) \geq |U_{i_t} \cap S(T)|/2$ for $0 \leq t \leq k$ with $t \neq s$ and $J_f(U_{i_s}) \geq (|U_{i_s} \cap S(T)|/2)-1$.

In either case above, we obtain the first line in \eqref{eq:svpn} by plugging the acquired inequalities above into \eqref{eq:jfx} and the first line in \eqref{eq:sixj}.

\medskip
\textsf{Case 2:}~$|W(T)| = 2$.

In the case when $|\{u_0,u_{p-1}\} \cap W(T)|=2$, by applying Lemma \ref{lem6} to each $U_{i_t}$ we obtain $J_f(U_{i_t}) \geq |U_{i_t} \cap S(T)|-1$ for $t=0,k$ and $J_f(U_{i_t}) \geq |U_{i_t} \cap S(T)|$ for $1 \leq t \leq k-1$.

In the case when $|\{u_0,u_{p-1}\} \cap W(T)|=1$, without loss of generality we may assume that $u_0 \in W(T)$ and $U_{i_s} \cap W(T) \neq \emptyset$ for some $0 \leq s \leq k$. Applying Lemma \ref{lem6} to each $U_{i_t}$ with $t \neq s$ and Lemma \ref{lem7} to $U_{i_s}$, we obtain $J_f(U_{i_0}) \geq |U_{i_0} \cap S(T)|-1$, $J_f(U_{i_t}) \geq |U_{i_t} \cap S(T)|$ for $1 \leq t \leq k$ with $t \neq s$, and $J_f(U_{i_s}) \geq |U_{i_s} \cap S(T)|-2$.

Finally, in the case when $|\{u_0,u_{p-1}\} \cap W(T)|=0$, we may assume that $U_{i_q} \cap S(T) \neq \emptyset$ and $U_{i_s} \cap S(T) \neq \emptyset$ for some $0 \leq q,s \leq k$. Applying Lemma \ref{lem6} to each $U_{i_t}$ with $t \neq q, s$ and Lemma \ref{lem7} to $U_{i_q}$ and $U_{i_s}$, we obtain $J_f(U_{i_t}) \geq |U_{i_t} \cap S(T)|$ for $0 \leq t \leq k$ with $t \neq q,s$ and $J_f(U_{i_t}) \geq |U_{i_t} \cap S(T)|-2$ for $t=q,s$.

In each case above, we obtain the third line in \eqref{eq:svpn} by plugging the acquired inequalities above into \eqref{eq:jfx} and the second line in \eqref{eq:sixj}.
\end{proof}

\begin{Remark}
\label{rem2}
{\em
Let $T$ be a two-branch tree with order $p \geq 5$ and branches $T_{1}$ and $T_{2}$. Let $f$ be a radio labelling of $T$ such that $f \not\in \mt{F}$. Note that if $X_{f}$ contains a pair of vertices $u_{i}, u_{i+1}$ of $T_{1}$, then it must contain some pair of vertices $u_{i^{*}}, u_{i^{*}+1}$ of $T_{2}$ as $T$ is a two-branch tree and,  by Lemma \ref{lem3}, $|V(T_1)|-|V(T_2)| = \pm 1$ when $|W(T)|=1$ and $|V(T_1)| = |V(T_2)|$ when $|W(T)| = 2$. Thus, for any $f \not\in \mt{F}$, $|X_{f}| \ge 4$ and $|X_{f}|$ is even. Consequently, the right-hand side of \eqref{eq:svpn} is larger than the right-hand side of \eqref{eq:svp} by at least $2$ in the first two lines and at least $4$ in the third line.
}
\end{Remark}

\subsection{Proof of Theorem \ref{thm:lower}}
\label{subsec:lb}

\begin{proof} [Proof of Theorem \ref{thm:lower}]
Let $T$ be a two-branch tree of order $p$ and diameter $d \geq 2$. Set $\ve = \ve(T)$ and $\xi = \xi(T)$.

We first prove the lower bound \eqref{rn:lower}. To achieve this it suffices to prove that any radio labelling of $T$ has span no less than the right-hand side of \eqref{rn:lower}. Suppose that $f$ is an arbitrary radio labelling of $T$ and let $OV_{f}(T): u_{0},u_{1},\ldots,u_{p-1}$ be the linear order of vertices of $T$ induced by $f$. Set
$$
\alpha(f) = L(u_{0}) + L(u_{p-1}) + \sigma(f),
$$
where $\sigma(f)$ is as defined in \eqref{eq:sigf}. Then
$$
\span(f) = (p-1)(d+1) - 2 L(T) + \alpha(f)
$$
by \eqref{eqn:sumup}.
So \eqref{rn:lower} is true if and only if $(p-1)(d+1) - 2 L(T) + \alpha(f) \ge (p-1)(d+\ve) - 2 L(T) + \ve + \xi$, or equivalently, $\alpha(f) \ge (p-1)(\ve - 1) + \ve + \xi$. In other words, to prove \eqref{rn:lower} it suffices to prove
\begin{equation}
\label{eq:alpha}
\alpha(f) \ge
\begin{cases}
\lfloor |S(T)|/2 \rfloor + 1, & \text{ if } |W(T)| = 1 \\
\max\{0, |S(T)|-2\} - (p-1), & \text{ if } |W(T)| = 2.
\end{cases}
\end{equation}
Moreover, equality in \eqref{rn:lower} holds with $f$ an optimal radio labelling of $T$ if and only if equality in \eqref{eq:alpha} holds.

Note that $L(u_{0})+L(u_{p-1}) \geq 1$ when $|W(T)| = 1$ and $L(u_{0})+L(u_{p-1}) \geq 0$ when $|W(T)| = 2$. Note also that, by Lemmas \ref{lem9} and \ref{lem10} and Remark \ref{rem2}, the minimum value of $\alpha(f)$ over all radio labellings $f$ is achieved by some labelling in $\mt{F}$. So it suffices to prove \eqref{eq:alpha} for any $f \in \mt{F}$. Henceforth we assume $f \in \mt{F}$.

\medskip
\textsf{Case 1:} $|W(T)| = 1$.

In this case, at most one of $u_{0}$ and $u_{p-1}$ can be in $W(T)$, and hence $L(u_{0})+L(u_{p-1}) \geq 1$.

\medskip
\textsf{Subcase 1.1:} $|S(T)|$ = 1.

By Lemma \ref{lem9}, we have $\alpha(f) \geq 1$ and so the first line in \eqref{eq:alpha} holds. Moreover, $\alpha(f) = 1$ holds if and only if $L(u_{0})+L(u_{p-1}) = 1$  (that is, one of $u_{0}$ and $u_{p-1}$ is in $W(T)$ and the other is in $N(W(T))$) and $\sigma(f) = 0$ (that is, $J_{f}(T) = 0$ and $\phi(u_t, u_{t+1}) = 0$ for $0 \le t \le p-2$). Thus, by the last statements in Lemmas \ref{lem8} and \ref{lem9}, $\alpha(f) = 1$ holds if and only if $L(u_0)+L(u_{p-1})=1$ and $OV_{f}(T)$ is admissible and satisfies \eqref{eq:Jfu}.

\medskip
\textsf{Subcase 1.2:} $|S(T)| \ge 2$.

Assume first $|\{u_{0}, u_{p-1}\} \cap W(T)| = 1$. Then by Lemma \ref{lem9} the first line in \eqref{eq:alpha} holds, and equality there occurs if and only if $L(u_{0})+L(u_{p-1}) = 1$ (that is, one of $u_{0}$ and $u_{p-1}$ is in $W(T)$ and the other is in $N(W(T))$) and $\sigma(f) = \lfloor |S(T)|/2 \rfloor$ (that is, $J_{f}(T) = \lfloor |S(T)|/2 \rfloor$ and $\phi(u_t, u_{t+1}) = 0$ for $0 \le t \le p-2$).

Now assume $|\{u_{0}, u_{p-1}\} \cap W(T)| = 0$. Then $L(u_{0})+L(u_{p-1}) \geq 2$ and $\alpha(f) \geq 2 + (|S(T)|/2)-1 = (|S(T)|/2)+1$ by Lemma \ref{lem9}. So the first line in \eqref{eq:alpha} holds. Moreover, equality in the first line in \eqref{eq:alpha} occurs if and only if $|S(T)|$ is even, $L(u_{0})+L(u_{p-1}) = 2$ (that is, both $u_{0}$ and $u_{p-1}$ are in $N(W(T))$), and $\sigma(f) = (|S(T)|/2) - 1$ (that is, $J_{f}(T) = (|S(T)|/2) - 1$ and $\phi(u_t, u_{t+1}) = 0$ for $0 \le t \le p-2$).

In summary, by the last statements in Lemmas \ref{lem8} and \ref{lem9}, $\alpha(f) = \lfloor |S(T)|/2 \rfloor + 1$ holds if and only if $OV_f(T)$ satisfies \eqref{eq:Jfu} and one of the following holds: (i) $L(u_0)+L(u_{p-1})=1$ and $OV_f(T)$ is admissible, except that $OV_f(T)$ is feasible when $|S(T)|$ is even; (ii) $L(u_0)+L(u_{p-1})=2$ and $OV_f(T)$ is admissible.

\medskip
\textsf{Case 2:} $|W(T)| = 2$.

We have $L(u_{0})+L(u_{p-1}) \geq 0$, and equality holds if and only if $W(T) = \{u_{0}, u_{p-1}\}$.

\medskip
\textsf{Subcase 2.1:} $|S(T)| = 1$ or $2$.

In this case, the second line in \eqref{eq:alpha} becomes $\alpha(f) \geq -(p-1)$, which is true by \eqref{eq:svp}. Moreover, in view of \eqref{eq:svp}, it is clear that $\alpha(f) = -(p-1)$ if and only if $L(u_0)+L(u_{p-1})=0$ (that is, $u_0,u_{p-1} \in W(T)$) and $\sigma(f) = -(p-1)$ (that is, $J_f(u_t,u_{t+1})=\phi(u_t,u_{t+1})=0$ and $\d(u_t,u_{t+1})=1$ for $0 \leq t \leq p-2$).

In summary, $\alpha(f) = -(p-1)$ holds if and only if $L(u_0)+L(u_{p-1})=0$ and $OV_f(T)$ is admissible and satisfies \eqref{eq:Jfu}.

\textsf{Subcase 2.2:} $|S(T)| \ge 3$.

By Lemma \ref{lem9}, we have $\alpha(f) \geq |S(T)|-(p+1)$ and so the second line in \eqref{eq:alpha} holds. Moreover, $\alpha(f) = |S(T)|-(p+1)$ if and only if the following hold: (i) when $|S(T)|$ is even, we have $L(u_0)+L(u_{p-1})=0$ (that is, $\{u_0, u_{p-1}\} \subseteq W(T)$) and $\sigma(f) = |S(T)|-(p+1)$ (that is, $\phi(u_t,u_{t+1})=0$, $\d(u_t,u_{t+1})=1$ for $0 \leq t \leq p-2$ and $J_f(T) = |S(T)|-2$), or $L(u_0)+L(u_{p-1})=2$ (that is, $\{u_0,u_{p-1}\} \subset N(W(T))$) and $\sigma(f) = |S(T)|-(p+3)$ (that is, $\phi(u_t,u_{t+1})=0$, $\d(u_t,u_{t+1})=1$ for $0 \leq t \leq p-2$ and $J_f(T) = |S(T)|-(p+3)$); (ii) when $|S(T)|$ is odd, we have $L(u_0)+L(u_{p-1})=1$ (that is, one of $u_0$ and $u_{p-1}$ is in $W(T)$ and the other is in $N(W(T))$) and $\sigma(f) = |S(T)|-(p+2)$ (that is, $\phi(u_t,u_{t+1})=0$, $\d(u_t,u_{t+1})=1$ for all $0 \leq t \leq p-2$ and $J_f(T)=|S(T)|-(p+2)$).

Thus, by the last statements in Lemmas \ref{lem8} and \ref{lem9}, $\alpha(f)=|S(T)|-(p+1)$ holds if and only if $OV_f(T)$ is admissible, satisfies \eqref{eq:Jfu}, and has the following properties: $L(u_0)+L(u_{p-1})=0$ when $|S(T)| = 1$ or $2$; $L(u_0)+L(u_{p-1})=1$ when $|S(T)| \ge 3$ is odd; $L(u_0)+L(u_{p-1}) = 0$ or $2$ when $|S(T)| \ge 4$ is even.

\medskip
Up to now we have proved that $\span(f)$ is no less than the right-hand side of \eqref{rn:lower} for any $f \in \mt{F}$. As explained earlier, this implies that the lower bound \eqref{rn:lower} is true.

\medskip
We now prove the desired necessary and sufficient condition for \eqref{rn:lower} to be tight.

\medskip
\textsf{Necessity:} Suppose that equality in \eqref{rn:lower} holds. In view of our remark right below \eqref{eq:alpha}, there exists an optimal radio labelling $f$ of $T$ such that $f \in \mt{F}$. Then $\span(f) = \rn(T) = (p-1)(d+\ve)-2L(T)+\ve+\xi$. Let $OV_f(T) : u_0,u_1,\ldots,u_{p-1}$ be the linear order of $V(T)$ induced by $f$. Set $a_t = J_f(u_t,u_{t+1})$ for $0 \leq t \leq p-2$. Since $f$ is optimal and equality in \eqref{rn:lower} holds, we have $\alpha(f) = (p-1)(\ve-1)+\ve+\xi$, or, equivalently, equality in \eqref{eq:alpha} holds. On the other hand, in the above proof of \eqref{rn:lower} we have essentially obtained a necessary condition for equality in \eqref{eq:alpha} to hold. In the case when $|W(T)|=1$ this condition can be summarized as follows: (i) if $|S(T)|$ is odd, then $L(u_0)+L(u_{p-1}) = 1$ and $OV_f(T)$ is admissible; (ii) if $|S(T)|$ is even, then either $L(u_0)+L(u_{p-1}) = 1$ and $OV_f(T)$ is feasible or admissible, or $L(u_0)+L(u_{p-1}) = 2$ and $OV_f(T)$ is admissible; (iii) for $1 \leq t \leq p-2$, $a_{t-1}+a_t = 1$ if $u_t \in S(T)$ and $\{u_{t-1},u_{t+1}\} \cap W(T) = \emptyset$, and $a_{t-1}+a_t = 0$ otherwise. In the case when $|W(T)|=2$ the necessary condition for equality in \eqref{eq:alpha} to hold can be summarized as follows: (i) $OV_f(T)$ is admissible and the following hold: $L(u_0)+L(u_{p-1})=0$ if $|S(T)| = 1$ or $2$; $L(u_0)+L(u_{p-1})=1$ if $|S(T)| \ge 3$ is odd; $L(u_0)+L(u_{p-1}) = 0$ or $2$ if $|S(T)| \ge 4$ is even; (ii) for $1 \leq t \leq p-2$, $a_{t-1}+a_t = 2$ if $u_t \in S(T)$ and $\{u_{t-1},u_{t+1}\} \cap W(T) = \emptyset$, and $a_{t-1}+a_t = 0$ otherwise.

Since $f \in \mt{F}$, we have $\phi(u_{t},u_{t+1}) = 0$, $\delta(u_{t},u_{t+1}) = 1 - \ve$, and hence $d(u_{t},u_{t+1}) = L(u_{t}) + L(u_{t+1}) + (1 - \ve)$, for $0 \leq t \leq p-2$. Plugging this into \eqref{eq:jij}, we have $f(u_{j})-f(u_{i}) = \sum_{t=i}^{j-1} \{a_{t} - (L(u_{t})+L(u_{t+1})) + (d+\ve)\}$ for $0 \le i < j \le p-1$. On the other hand, since $f$ is a radio labelling, we have $d(u_{i},u_{j}) \geq d+1- (f(u_{j})-f(u_{i}))$. Therefore,
$$
d(u_{i},u_{j}) \geq \sum_{t=i}^{j-1}\{L(u_{t})+L(u_{t+1}) - a_{t} - (d+\ve)\} + (d+1)
$$
for $0 \le i < j \le p-1$, as desired in \eqref{eqn:duv}.

\medskip
\textsf{Sufficiency:} Suppose that there exist a linear order $OV(T): u_{0},u_{1},\ldots,u_{p-1}$ of the vertices of $T$ and non-negative integers $a_0, a_1, \ldots, a_{p-2}$ such that conditions (a) and (b) are satisfied. Using these conditions, one can easily verify that $L(u_0)+L(u_{p-1}) = \ve$ and $\sum_{t=0}^{p-2} a_{t} = \xi$. Define $f: V(T) \rightarrow \{0, 1, 2, \ldots\}$ by $f(u_0) = 0$ and $f(u_{i+1}) = f(u_{i}) - (L(u_{i}) + L(u_{i+1})) + a_{i} + (d + \ve)$ for $0 \leq i \leq p-2$. Then $f(u_{j})-f(u_{i}) = \sum_{t=i}^{j-1}\{-(L(u_{t})+L(u_{t+1})) + a_{t} + (d+\ve)\} \ge (d+1)-d(u_{i},u_{j})$ for $0 \leq i < j \leq p-1$. Hence $f$ is a radio labelling of $T$. Since $L(u_0)+L(u_{p-1})+\sum_{t=0}^{p-2}a_t = \ve + \xi$, we have
\bean \span(f) & = & f(u_{p-1}) - f(u_{0}) \\
& = & -\sum_{t=0}^{p-2}\{L(u_{t})+L(u_{t+1}) - a_{t} - (d+\ve)\}
\eean
\bean
& = & (p-1)(d+\ve)-2L(T)+L(u_{0})+L(u_{p-1})+\sum_{t=0}^{p-2} a_{t} \\
& = & (p-1)(d+\ve)-2L(T)+\ve+\xi.
\eean
Hence $\span(f)$ is equal to the lower bound in \eqref{rn:lower}. Therefore, $f$ is an optimal radio labelling of $T$ and equality in \eqref{rn:lower} holds.
\end{proof}

\subsection{Proof of Theorem \ref{thm:cat}}
\label{subsec:cat}

\begin{proof} [Proof of Theorem \ref{thm:cat}]
The order, total level and parameter $\xi$ of $C(n,k)$ are given, respectively, by
$$
p = n+2k\min\{2, \lfloor (n-1)/2 \rfloor\}
$$
$$
L(C(n,k)) =
\begin{cases}
2(1+2k), & \mbox{if $n = 3,4$} \\
\frac{n^2-1}{4}+(n+5)k, & \mbox{if $n > 3$ is odd} \\
\frac{n(n-2)}{4}+(n+4)k, & \mbox{if $n > 4$ is even}
\end{cases}
$$
$$
\xi(C(n,k)) =
\begin{cases}
k, & \mbox{if $n$ is odd} \\
2(k-1), & \mbox{if $n$ is even}.
\end{cases}
$$
Plugging all these into \eqref{rn:lower}, we obtain that the right-hand side of \eqref{rn:cat} is a lower bound for $\rn(C(n,k))$. We now prove that $\rn(C(n,k))$ is equal to the right-hand side of \eqref{rn:cat}. For this purpose, it suffices to prove the existence of a linear order $u_0,u_1,\ldots,u_{p-1}$ of the vertices of $C(n,k)$ such that conditions (a) and (b) in Theorem \ref{thm:lower} are satisfied. Note that once such a linear order is determined we then obtain a unique sequence of non-negative integers $a_0, a_1, \ldots, a_{p-2}$ satisfying \eqref{eq:a0} and \eqref{eq:seq} (see Remark \ref{rem:seq}). So in what follows we will freely use these integers in our proof of condition \eqref{eqn:duv}.

Recall that the vertices of spine of $C(n,k)$ are denoted as $v_1, v_2, \ldots, v_n$, where $v_i$ is adjacent to $v_{i+1}$ for $1 \leq i \leq n-1$. Denote the $k$ neighbours of $v_i$ by $v_{i,j}$, $1 \leq j \leq k$, for $i \in \{1, (n-1)/2, (n+3)/2, n\}$ when $n$ is odd and $i \in \{1, (n-2)/2, (n+4)/2, n\}$ when $n$ is even.

\medskip
\textsf{Case 1:} $n$ is odd.

In this case, we have $W(C(n,k)) = \{v_{(n+1)/2}\}$. We consider the following two subcases separately.

\medskip
\textsf{Subcase 1.1:} $n = 3$.

Define a linear order $u_0,u_1,\ldots,u_{p-1}$ of $V(C(n,k))$ as follows. Set $u_0 = v_{(n+1)/2}$, $u_{p-2} = v_3$ and $u_{p-1} = v_1$. Set $u_t = v_{i,j}$, where
\begin{equation*}
t =
\begin{cases}
2j-1, & \mbox{if $i = n$} \\
2j, & \mbox{if $i = 1$}
\end{cases}
\end{equation*}
which ranges from $1$ to $p-3$. Then $u_{p-1} \in W(C(n,k))$ and $u_0 \in N(u_{p-1})$. Hence $L(u_0)+L(u_{p-1})=1$. Note that one remote vertex is ordered immediately after the weight center and the remaining remote vertices are ordered in an interval of even length. Hence the linear order $u_0,u_1,\ldots,u_{p-1}$ is admissible. Thus condition (a) in Theorem \ref{thm:lower} is satisfied. Observe that $u_i$ and $u_{i+1}$ are in different branches. So $\phi(u_i,u_{i+1})= 0$ for $0 \leq i \leq p-2$. It is straightforward to verify that $d(u_i,u_j)$ satisfies \eqref{eqn:duv} for $0 \leq i < j \leq p-1$, and therefore condition (b) in Theorem \ref{thm:lower} is also satisfied.

\medskip
\textsf{Subcase 1.2:} $n \geq 5$.

Define a linear order $u_0,u_1,\ldots,u_{p-1}$ of $V(C(n,k))$ as follows. Set $u_0 = v_{(n-1)/2}$ and $u_{p-1} = v_{(n+1)/2}$. Set $u_t = v_{i,j}$, where
\begin{equation*}
t =
\begin{cases}
4(j-1)+2, & \mbox{if $i = 1$} \\
4j, & \mbox{if $i = (n-1)/2$} \\
4(j-1)+3, & \mbox{if $i = (n+3)/2$} \\
4(j-1)+1, & \mbox{if $i = n$}.
\end{cases}
\end{equation*}
Note that $t$ ranges from $1$ to $4k$.  Set $u_t = v_i$, where
\begin{equation*}
t =
\begin{cases}
4k+2i, & \mbox{if $i < (n-1)/2$} \\
4k+2(i-(m+1)/2)-1, & \mbox{if $i > (n+1)/2$},
\end{cases}
\end{equation*}
which ranges from $4k+1$ to $p-2$. Then $u_{p-1} \in W(C(n,k))$ and $u_0 \in N(u_{p-1})$. Hence $L(u_0)+L(u_{p-1})=1$. Moreover, all maximum intervals of remote vertices are of even length. So the linear order $u_0,u_1,\ldots,u_{p-1}$ is feasible and thus condition (a) in Theorem \ref{thm:lower} is satisfied. Observe that, for $0 \leq i \leq p-2$, $u_i$ and $u_{i+1}$ are in different branches and hence $\phi(u_i,u_{i+1}) = 0$. The following claim implies that condition (b) in Theorem \ref{thm:lower} is satisfied as well.

\medskip
\textsf{Claim 1:} The linear order $u_0,u_1,\ldots,u_{p-1}$ defined above satisfies condition \eqref{eqn:duv}.

Consider any two vertices $u_i,u_j$ with $0 \leq i < j \leq p-1$. If $j = i+1$, then it is straightforward to verify that \eqref{eqn:duv} is satisfied. Hence we assume that $j-i \geq 2$. Denote the right-hand side of \eqref{eqn:duv} by $S_{i,j} = \sum_{t=i}^{j-1}(L(u_t)+L(u_{t+1})-a_t)-(j-i-1)(d+1)$. Note that if $j-i \geq 3$, then $S_{i,j} \leq 1 \leq d(u_i,u_j)$ and hence \eqref{eqn:duv} is satisfied. It remains to prove \eqref{eqn:duv} when $j = i+2$. So assume that $j=i+2$. Consider first the case when $0 \le i < j=i+2 \leq 4k$. If $u_i = u_0$, then $d(u_i,u_j) = d/2-1$ and $S_{i,j} = d/2-1 = d(u_i,u_j)$. If $u_i = u_{4t+1}\;(0 \leq t \leq k-1)$, then $d(u_i,u_j) = d/2$ and $S_{i,j} = d/2 = d(u_i,u_j)$. If $u_i = u_{4t+2}\;(0 \leq t \leq k-1)$, then $d(u_i,u_j) = d/2 \geq 3$ and $S_{i,j} \leq 3 \leq d(u_i,u_j)$. If $u_i = u_{4t+3}\;(0 \leq t \leq k-2)$, then $d(u_i,u_j) = d/2 \geq 3$ and $S_{i,j} \leq 2 < d(u_i,u_j)$. If $u_i = u_{4t}\;(1 \leq t \leq k-1)$, then $d(u_i,u_j) = d/2 \geq 3$ and $S_{i,j} = d/2 = d(u_i,u_j)$. Thus, if $0 \le i < j = i+2 \leq 4k$, then $d(u_i,u_j)$ satisfies \eqref{eqn:duv}. Observe that $S_{i,j} \leq 0 < d(u_i,u_j)$. Thus, in the case when $0 \leq i \leq 4k$ and $4k+1 \leq j = i+2 \leq p-1$, $d(u_i,u_j)$ also satisfies \eqref{eqn:duv}. Finally, if $4k+1 \leq i < j = i+2 \leq p-1$, then $S_{i,j} = 0 < d(u_i,u_j)$ and hence $d(u_i,u_j)$ satisfies \eqref{eqn:duv}. This completes the proof of Claim 1.

\medskip
\textsf{Case 2:} $n$ is even.

In this case, we have $W(C(n,k)) = \{v_{n/2},v_{n/2+1}\}$. We consider the following two subcases  separately.

\medskip
\textsf{Subcase 2.1:} $n = 4$.

Define a linear order $u_0,u_1,\ldots,u_{p-1}$ of $V(C(n,k))$ as follows. Set $u_0 = v_{n/2}$, $u_{p-3} = v_4$, $u_{p-2} = v_1$ and $u_{p-1} = v_{n/2+1}$. Set $u_t = v_{i,j}$, where
\begin{equation*}
t =
\begin{cases}
2j-1, & \mbox{if $i = n$} \\
2j, & \mbox{if $i = 1$},
\end{cases}
\end{equation*}
which ranges from $1$ to $p-4$. Then $L(u_0)+L(u_{p-1})=0$. Note that one remote vertex is ordered immediately after a weight center and the remaining remote vertices are ordered in an interval of even length. So the linear order $u_0,u_1,\ldots,u_{p-1}$ defined this way is admissible, and thus condition (a) in Theorem \ref{thm:lower} is satisfied. Observe that $u_i$ and $u_{i+1}$ are in opposite branches for all $0 \leq i \leq p-2$ and hence $\phi(u_i,u_{i+1}) = 0$ and $\delta(u_i,u_{i+1}) = 1$. It is straightforward to verify that $d(u_i,u_j)$ satisfies \eqref{eqn:duv} for $0 \leq i < j \leq p-1$. That is, condition (b) in Theorem \ref{thm:lower} is satisfied.

\medskip
\textsf{Subcase 2.2:} $n \geq 6$.

Define a linear order $u_0,u_1,\ldots,u_{p-1}$ of $V(C(n,k))$ as follows. Set $u_0 = v_{n/2-1}$, $u_1 = v_{n,1}$, $u_2 = v_{n/2}$, $u_3 = v_{n,2}$, $u_4 = v_{1,1}$, $u_5 = v_{n/2+1}$, $u_6 = v_{1,2}$ and $u_{p-1} = v_{n/2+2}$. Set $u_t = v_{i,j}$, where
\begin{equation*}
t =
\begin{cases}
4(j-1)+2, & \mbox{if $i = 1$ and $3 \leq j \leq k$}, \\
4(j+1), & \mbox{if $i = n/2-1$ and $1 \leq j \leq k-1$}, \\
4(j+1)-1, & \mbox{if $i = n/2+2$ and $1 \leq j \leq k-1$}, \\
4(j-1)+1, & \mbox{if $i = n$ and $3 \leq j \leq k$}.
\end{cases}
\end{equation*}
Note that this $t$ ranges from $7$ to $4k$. Set $u_{4k+1} = v_{n/2+2,k}$ and $u_{4k+2} = v_{n/2-1,k}$. Set $u_t = v_i$, where
\begin{equation*}
t =
\begin{cases}
4k+2(n/2-i), & \mbox{if $i < n/2-1$} \\
4k+2(n-i)+3, & \mbox{if $i > n/2+2$},
\end{cases}
\end{equation*}
which ranges from $4k+3$ to $p-2$. Then $L(u_0)+L(u_{p-1})=2$ and $W(T) = \{u_2, u_5\}$. We have $\{u_1,u_3,u_4,u_6\} \subseteq S(T)$ and all other remote vertices are ordered in intervals of even lengths. Hence the linear order $u_0,u_1,\ldots,u_{p-1}$ defined this way is admissible. Thus condition (a) in Theorem \ref{thm:lower} is satisfied. Observe that, for $0 \leq i \leq p-2$, $u_i$ and $u_{i+1}$ are in opposite branches and hence $\phi(u_i,u_{i+1}) = 0$ and $\delta(u_i,u_{i+1}) = 1$. Now we prove that condition (b) in Theorem \ref{thm:lower} is also satisfied. To this end it suffices to prove the following claim under the condition that $a_{i-1}+a_i$ is equal to $2$ if $u_i \in S(T)$ and $\{u_{i-1}, u_{i+1}\} \cap W(T) = \emptyset$ and 0 otherwise.

\medskip
\textsf{Claim 2:} The linear order $u_0,u_1,\ldots,u_{p-1}$ defined above satisfies \eqref{eqn:duv}.

Consider any two vertices $u_i$ and $u_j$ with $0 \leq i < j \leq p-1$. If $j=i+1$, then it is straightforward to show that \eqref{eqn:duv} is satisfied. Hence we assume that $j \geq i+2$. Denote the right-hand side of \eqref{eqn:duv} by $S_{i,j} = \sum_{t=i}^{j-1}(L(u_t)+L(u_{t+1})-a_t)-(j-i)d+(d-1)$. If $j-i \geq 4$, then $S_{i,j} \leq 1 \leq d(u_i,u_j)$ and hence \eqref{eqn:duv} is satisfied. If $j = i+3$, then $d(u_i,u_j) \geq 2$ and hence $S_{i,j} \leq 2 \leq d(u_i,u_j)$. Hence \eqref{eqn:duv} is satisfied when $j = i+3$. It remains to prove that \eqref{eqn:duv} is satisfied when $j = i+2$. Assume that $j=i+2$. It is straightforward to verify that $d(u_i,u_j)$ satisfies \eqref{eqn:duv} when $0 \leq i < j \leq 6$ or $0 \leq i \leq 6$ and $7 \leq j \leq 4k+2$. Assume that $7 \leq i < j \leq 4k+2$. If $u_i = u_{4t+3}\;(1 \leq t \leq k-1)$, then $d(u_i,u_j) = (d+3)/2$ and $S_{i,j} < 3 \leq d(u_i,u_j)$. If $u_i = u_{4(t+1)}\;(1 \leq t \leq k-1)$, then $d(u_i,u_j) = (d-1)/2$ and $S_{i,j} = (d-3)/2 < (d-1)/2$. If $u_i = u_{4t+1}\;(2 \leq t \leq k)$, then $d(u_i,u_j) = (d-1)/2$ and $S_{i,j} = (d-5)/2 < (d-1)/2$. If $u_i = u_{4t+2}\;(2 \leq t \leq k)$, then $d(u_i,u_j) \geq 3$ and $S_{i,j} < 2 \leq d(u_i,u_j)$. So $d(u_i,u_j)$ satisfies \eqref{eqn:duv} when $u_i,u_j \in U_2$. If $7 \leq i \leq 4k+2$ and $4k+3 \leq j \leq p-1$, then $S_{i,j} \leq 2 \leq d(u_i,u_j)$ and hence $d(u_i,u_j)$ satisfies \eqref{eqn:duv}. Finally, if $4k+3 \leq i < j \leq p-1$, then $S_{i,j} = 1 \leq d(u_i,u_j)$ and hence $d(u_i,u_j)$ satisfies \eqref{eqn:duv}. Therefore, in all possibilities we have proved that $d(u_i,u_j)$ satisfies \eqref{eqn:duv}. This completes the proof of Claim 2.

In summary, in each case above we have given a linear order $u_0,u_1,\ldots,u_{p-1}$ of the vertices of $C(n,k)$ such that conditions (a) and (b) in Theorem \ref{thm:lower} are satisfied. Thus, by Theorem \ref{thm:lower}, $\rn(C(n,k))$ is equal to the right-hand side of \eqref{rn:lower}, which is exactly the right-hand side of \eqref{rn:cat} in this special case.
\end{proof}

It is worth mentioning that by Theorem \ref{thm:lower} the labelling defined in \eqref{f00} and \eqref{f11} with underlying linear order (and associated non-negative integers) given in the proof above is an optimal radio labelling of $C(n,k)$. We illustrate this for $C(5,3)$ and $C(6,3)$ in Figure \ref{Cater}.

\begin{figure}[ht]
\begin{center}
\includegraphics[width=5.8 in]{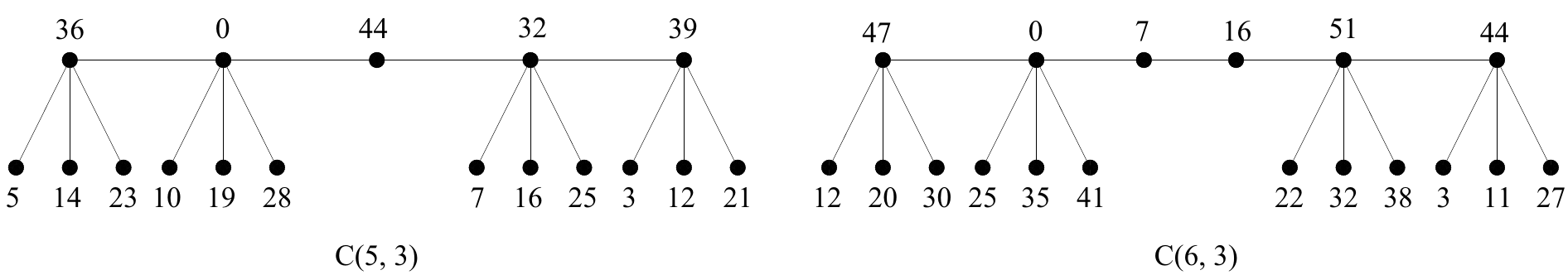}
\caption{Optimal radio labellings of $C(5,3)$ and $C(6,3)$ obtained from the proof of Theorem \ref{thm:ds}.}
\label{Cater}
\end{center}
\end{figure}

\section{Radio number of level-wise regular two-branch trees}
\label{level}

The \emph{eccentricity} of a vertex in a graph is the maximum distance from it to any vertex in the graph \cite{West}. The \emph{center} of a tree comprises all vertices with minimum eccentricity \cite{West}. It is well known that the center $L_{0}$ of any tree $T$ consists of one vertex $r$ or two adjacent vertices $r_1, r_2$, that is, $L_{0} = \{r\}$ or $L_{0} = \{r_1, r_2\}$, depending on whether $\diam(T)$ is even or odd. We may think of $T$ as rooted at $L_0$. Call $d(u, L_{0}) = \min\{d(u, v): v \in L_{0}\}$ the \emph{distance} from $u \in V(T)$ to $L_{0}$ and $h = \lfloor \diam(T)/2 \rfloor$ the \emph{height} of $T$. (We defined the ``weight center" of a tree, the ``level of a vertex" in a tree and the ``total level" of a tree in Section \ref{prel}. Note the difference between these terms and the ``center" of a tree, the ``eccentricity of a vertex" in a tree and the ``height" of a tree, respectively.) Then $d(u, L_{0}) \in \{0, 1, \ldots, h\}$ for all $u \in V(T)$. Define
$$
L_{i} = \{u \in V(T): d(u, L_{0}) = i\},\ 1 \leq i \leq h.
$$
The vertices in $L_{i}$ are said to be at level $i$, for $0 \leq i \leq h$. We say that $T$ is a  \emph{level-wise regular tree} \cite{Tuza} if, for $0 \leq i \leq h$, all vertices in $L_i$ have the same degree, say, $m_i$. Note that $m_{h} = 1$ and all leaves of a level-wise regular tree $T$ are in $L_{h}$. Note also that, up to isomorphism, $T$ is uniquely determined by $(m_{0},m_{1},\ldots,m_{h-1})$ and $z = |L_{0}| \in \{1, 2\}$. Following \cite{Tuza}, we use $T^{z}_{m_{0},m_{1},\ldots,m_{h-1}}$ to denote this tree in the sequel. It can be shown that for $T^{z}_{m_{0},m_{1},\ldots,m_{h-1}}$ the center and weight center are identical, and $T^{z}_{m_{0},m_{1},\ldots,m_{h-1}}$ is a two-branch tree precisely when $m_{0} = 2$.

In \cite[Theorem 4]{Tuza}, Hal\'{a}sz and Tuza obtained a lower bound for $\rn(T^{z}_{m_{0},m_{1},\ldots,m_{h-1}})$, $z=1,2$, for any $h \ge 1$ and $m_{0},m_{1},\ldots,m_{h-1} \ge 2$, and in \cite[Theorem 5]{Tuza} they proved further that this lower bound is tight when all $m_{i} \geq 3$ for $0 \le i \le h-1$. However, if not all $m_{i}$ for $0 \le i \le h-1$ are no less than $3$, then the value of $\rn(T^{z}_{m_{0},m_{1},\ldots,m_{h-1}})$ is unknown in general. The next theorem covers the case when $m_0 = 2$ and all other $m_{i}$ for $1 \le i \le h-1$ are no less than $3$. Note that this result cannot be obtained from \cite[Theorem 4]{Tuza} since the value of $\rn(T^{z}_{m_{0},m_{1},\ldots,m_{h-1}})$ in this case as given below is significantly larger than the lower bound in \cite[Theorem 4]{Tuza}.

\begin{Theorem}
\label{thm:level}
Let $h \geq 1$ and $m_{i} \geq 3$, for $1 \leq i \leq h-1$. Set $T^{z} = T^{z}_{2,m_{1},\ldots,m_{h-1}}$ for $z=1,2$. Then
\begin{equation}
\label{rn:level}
\rn(T^{z}) =
\begin{cases}
\sum_{i=1}^{h-1}\(4(h-i)-2\)\(\prod_{j=1}^{i}(m_{j}-1)\)+\prod_{i=1}^{h-1}(m_{i}-1)+4h-1, & \mbox{if  } z = 1 \\
\sum_{i=1}^{h-1}\(4(h-i)-2\)\(\prod_{j=1}^{i}(m_{j}-1)\)+2\prod_{i=1}^{h-1}(m_{i}-1)+6h-3, & \mbox{if  } z = 2.
\end{cases}
\end{equation}
\end{Theorem}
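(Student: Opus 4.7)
My plan is to apply Theorem \ref{thm:lower} to $T^z := T^z_{2, m_1, \ldots, m_{h-1}}$ for both $z \in \{1, 2\}$: first to obtain the lower bound, then to construct an optimal radio labelling by exhibiting a linear order satisfying conditions (a) and (b) of that theorem. Since each internal vertex at level $i$ ($1 \le i \le h-1$) has $m_i - 1$ children, one has $|L_i| = 2\prod_{j=1}^{i-1}(m_j-1)$ for $1 \le i \le h$, giving $p = z + 2\sum_{i=1}^h \prod_{j=1}^{i-1}(m_j-1)$ and $L(T^z) = 2\sum_{i=1}^h i\prod_{j=1}^{i-1}(m_j-1)$; the diameter is $d = 2h + z - 1$, so $\ve(T^z) = 2-z$ and $d+\ve=2h+1$; and since $m_i \geq 3$ for $1 \le i \le h-1$, the remote set equals the leaf set $L_h$, whence $\xi(T^1) = \prod_{j=1}^{h-1}(m_j-1)$ and $\xi(T^2) = 2\prod_{j=1}^{h-1}(m_j-1)-2$. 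A routine summation identifies $(p-1)(d+\ve) - 2L(T^z) + \ve + \xi$ with the right-hand side of \eqref{rn:level}, so \eqref{rn:lower} supplies the lower bound.

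For the matching upper bound I will exhibit an explicit linear order $u_0, \ldots, u_{p-1}$ of $V(T^z)$. For $z=1$ let $w$ be the weight center and $x_s$ the unique level-1 vertex in branch $T_s$; take $u_0=x_1$, $u_{p-2}=x_2$, $u_{p-1}=w$, and fill the interior positions with the vertices at levels $h, h-1, \ldots, 2$ listed in this order. For $z=2$ with weight centers $w,w'$, take $u_0=w$, $u_{p-1}=w'$, place one leaf at each of positions $1$ and $p-2$ from opposite branches, and fill positions $2, \ldots, |S(T^z)|-1$ with the remaining leaves and positions $|S(T^z)|, \ldots, p-3$ with the internal vertices at levels $h-1, \ldots, 1$. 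In each level block the vertices alternate between $T_2$ and $T_1$, and within each branch at each level they are further ordered by a nested alternation that ensures any two vertices occupying positions differing by $2$ in the overall linear order have their lowest common ancestor at the branch root (level $1$). In both constructions consecutive vertices lie in different or opposite branches (or involve a weight center), $L(u_0)+L(u_{p-1})=2-z$, and the maximal intervals of remote vertices not adjacent to $W(T^z)$ all have even length, so condition (a) of Theorem \ref{thm:lower} holds and the sequence $a_0, \ldots, a_{p-2}$ from \eqref{eq:a0}--\eqref{eq:seq} is non-negative with $\sum a_t = \xi(T^z)$.

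The main technical obstacle is verifying the distance inequality \eqref{eqn:duv} for all pairs $0 \le i < j \le p-1$. Following the strategy of Theorem \ref{thm:cat}'s proof, I would stratify on $j-i$: the case $j=i+1$ reduces to $a_i \ge 0$ together with $\phi(u_i,u_{i+1})=0$; for $j-i$ large the right-hand side of \eqref{eqn:duv} is dominated by the many negative summands $L(u_t)+L(u_{t+1})-a_t-(d+\ve)$, making the inequality trivially true. The delicate case is $j=i+2$, where $u_i$ and $u_{i+2}$ lie in the same branch: the nested alternation forces $\phi(u_i,u_{i+2}) = 1$, hence $d(u_i,u_{i+2}) = L(u_i) + L(u_{i+2}) - 2$, and combined with the explicit value $a_i + a_{i+1} = |W(T^z)|$ from \eqref{eq:seq} this yields \eqref{eqn:duv}. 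A shorter case analysis handles $j=i+3$ and the transitions between level blocks. Once (a) and (b) are established, Theorem \ref{thm:lower} yields the optimal radio labelling defined by \eqref{f00}--\eqref{f11}, completing the proof of \eqref{rn:level}.
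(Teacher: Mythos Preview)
Your overall strategy coincides with the paper's: compute $p$, $L(T^z)$, $\xi(T^z)$, plug into \eqref{rn:lower} to obtain the lower bound, then exhibit a linear order satisfying conditions (a) and (b) of Theorem~\ref{thm:lower}. Your orders differ from the paper's in minor ways (for $z=1$ the paper sets $u_0=w$ and $u_{p-1}\in N(w)$, you do the reverse; for $z=2$ the paper takes $L(u_0)+L(u_{p-1})=2$ with the weight centers at interior positions $2$ and $5$, you take $L(u_0)+L(u_{p-1})=0$), but both are level-by-level with branch alternation, and your verification of condition (a) is correct.

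The gap is in your verification of \eqref{eqn:duv}. Your stratification covers $j-i=1$ and $j-i=2$ carefully and asserts that larger $j-i$ is handled by negativity of the summands plus ``a shorter case analysis''. But for same-branch pairs with $j-i=2k$ and $u_i,u_j$ among the leaves, one computes (for $z=1$) that \eqref{eqn:duv} reduces to $\phi(u_i,u_j)\le\lfloor(3k-1)/2\rfloor$, and in particular for $k=2$ one needs $\phi(u_i,u_{i+4})\le 2$; this does not follow from $\phi(u_i,u_{i+2})=1$ alone, nor from the summands being $\le -1$. The paper closes this by proving the single combinatorial fact $\phi(u_i,u_j)\le (j-i)/2$ for its explicit ordering \eqref{eq:j}, and then bounds the right-hand side of \eqref{eqn:duv} uniformly via $S_{i,j}\le l_i+l_j-(j-i)\le l_i+l_j-2\phi(u_i,u_j)=d(u_i,u_j)$ in the same-branch case. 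Your ``nested alternation'' is meant to do the same job, but you neither write it down nor state the inequality $\phi(u_i,u_j)\le (j-i)/2$ that it must deliver; without this, the cases $4\le j-i<d$ remain open, including the level-block transitions you defer.
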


\begin{proof}
The order, total level and parameter $\xi$ of $T^{z}$ are given, respectively, by
$$
p(T^{z}) =
\begin{cases}
3 + 2\sum_{i=1}^{h-1}\(\prod_{j=1}^{i}(m_{j}-1)\), & \mbox{if  } z = 1 \\
4 + 2\sum_{i=1}^{h-1}\(\prod_{j=1}^{i}(m_{j}-1)\), & \mbox{if  } z = 2
\end{cases}
$$
$$
L(T^{z}) = 2 + 2\sum_{i=1}^{h-1}(i+1)\(\prod_{j=1}^{i}(m_{j}-1)\)
$$
and
$$
\xi(T^{z}) =
\begin{cases}
\prod_{i=1}^{h-1}(m_{i}-1), & \mbox{if } z=1 \\
2\prod_{i=1}^{h-1}(m_{i}-1)-2, & \mbox{if } z=2.
\end{cases}
$$
Plugging these into \eqref{rn:lower}, we obtain that the right-hand side of \eqref{rn:level} is a lower bound for $\rn(T^{z})$. Now we prove that this lower bound is the actual value of $\rn(T^{z})$. To this end it suffices to find a linear order $u_{0},u_{1},\ldots,u_{p-1}$ of the vertices of $T^{z}$ such that conditions (a) and (b) in Theorem \ref{thm:lower} are satisfied. Once such a linear order is found, we then obtain a corresponding sequence of non-negative integers $a_0, a_1, \ldots, a_{p-2}$ satisfying \eqref{eq:seq} (see Remark \ref{rem:seq}). These are the integers which will be used in our proof of condition \eqref{eqn:duv}.

\medskip
\textsf{Case 1:} $z$ = 1.

Let $w$ be the unique weight center of $T^{1}$. Denote the two children of $w$ by $w_{0}$ and $w_{1}$. Denote the $m_{1}-1$ children of $w_{0}$ and $w_{1}$ by $w_{00},w_{01},\ldots,w_{0(m_{1}-2)}$ and $w_{10},w_{11},\ldots,w_{1(m_{1}-2)}$, respectively. Inductively, denote the $m_{l-1}$ children of $w_{i_{1}i_{2} \ldots i_{l}}$ ($0 \leq i_{1} \leq 1, 0 \leq i_{2} \leq m_{1}-2,\ldots,0 \leq i_{l} \leq m_{l-1}-2$) by $w_{i_{1}i_{2} \ldots i_{l}i_{l+1}}$ ($0 \leq i_{l+1} \leq m_{l}-2$). Set $u_{0}$ = $w$. Set $u_{j} = w_{i_{1}i_{2} \ldots i_{l}}$, where
\be
\label{eq:j}
j = 1 + i_{1} + m_{0} \sum_{t=2}^l i_{t}(m_{1}-1) \cdots (m_{t-2}-1) + m_{0} \sum_{t = l+1}^{h-1} (m_{1}-1) \cdots (m_{t}-1)
\ee
which ranges from $1$ to $p-1$. Then $u_{p-1}$ is adjacent to $u_{0} = w$ and hence $L(u_{0})+L(u_{p-1})$ = 1. Moreover, one remote vertex is immediately after the weight center $w$ and all other remote vertices are ordered in interval of even length except possibly one. Thus condition (a) in Theorem \ref{thm:lower} is satisfied. Note that $u_{i}$ and $u_{i+1}$ are in different branches and so $\phi(u_{i},u_{i+1})$ = 0 for $1 \leq i \leq p-2$. The following claim shows that condition (b) in Theorem \ref{thm:lower} is also satisfied.

\medskip
\textsf{Claim 1:} The linear order $u_{0},u_{1},\ldots,u_{p-1}$ defined above satisfies condition \eqref{eqn:duv}.

Consider any pair of vertices $u_{i}, u_{j}$ with $0 \leq i < j \leq p-1$. Let $L(u_{i}) = l_{i}$ and $L(u_{j}) = l_{j}$. Denote $A = a_{i}+a_{i+1}+\ldots+a_{j}$. Then $l_{i} > l_{j}$ as $i < j$. The right-hand side of \eqref{eqn:duv} is equal to $S_{i,j} = \sum_{t=i}^{j-1}\(L(u_{t})+L(u_{t+1})-a_t\) - (j-i-1)(d+1) \leq (j-i)l_{i}+(j-i-1)l_{i} + l_{j} - A - (j-i-1)(d+1) = l_{i}+l_{j}-A-(j-i-1)(d+1-2l_{i})$. If $u_{i}$ and $u_{j}$ are in different branches, then $d(u_{i},u_{j}) = l_{i}+l_{j}$ and so $S_{i,j} \leq l_{i}+l_{j} = d(u_{i},u_{j})$ as $j-i \geq 1$, $d-2l_{i} \geq 0$ and $A \geq 0$. If $u_{i}$ and $u_{j}$ are in the same branch, then $d(u_{i},u_{j}) = l_{i}+l_{j}-2\phi(u_{i},u_{j})$ with $\phi(u_{i},u_{j}) \geq 1$ and $j-i = 2m$ for some $m \geq 1$. It is clear from the definition of the linear order above that $m \geq \phi(u_{i},u_{j})$. If $u_{i}, u_{j} \in S(T)$, then $S_{i,j} \leq l_{i}+l_{j}-((j-i)/2)-(j-i-1) \leq l_{i}+l_{j}-(j-i) \leq l_{i}+l_{j}-2m \leq l_{i}+l_{j}-2\phi(u_{i},u_{j}) = d(u_{i},u_{j})$. If $u_{i} \in S(T)$ but $u_{j} \not\in S(T)$, then there exists $k$ with $i \leq k \leq j$ such that $u_{k} \in S(T)$ and $u_{k+1} \not\in S(T)$. Let $L(u_{k}) = l_{k}$ and $A' = a_{i}+a_{i+1}+\ldots+a_{k}$. Then $S_{i,j} = S_{i,k}+S_{k,j}-(d+1) \leq l_{i}+l_{k}-A'-(k-i-1)(d+1-2l_{i})+l_{k}+l_{j}-(j-k-1)(d+1-2(l_{k}-1))-(d+1) \leq l_{i}+l_{k}-((k-i)/2)-(k-i-1)+l_{k}+l_{j}-2(j-k-1)-(d+1) \leq l_{i}+l_{k}-(k-i)+l_{k}+l_{j}-(j-k)-(d+1) \leq l_{i}+l_{j}-(j-i)-(d+1-2l_{k}) \leq l_{i}+l_{j}-(j-i) \leq l_{i}+l_{j}-2m \leq l_{i}+l_{j}-2\phi(u_{i},u_{j}) = d(u_{i},u_{j})$. If $u_{i}, u_{j} \not\in S(T)$, then $S_{i,j} \leq l_{i}+l_{j}-(j-i-1)(d+1-2l_{i}) \leq l_{i}+l_{j}-2(j-i-1) \leq l_{i}+l_{j}-(j-i) \leq l_{i}+l_{j}-2m \leq l_{i}+l_{j}-2\phi(u_{i},u_{j}) = d(u_{i},u_{j})$. This completes the proof of Claim 1.

So far we have proved that the linear order $u_0,u_1,\ldots,u_{p-1}$ of the vertices of $T^1$ determined by \eqref{eq:j} satisfies conditions (a) and (b) in Theorem \ref{thm:lower}. Hence, by Theorem \ref{thm:lower}, $\rn(T^1)$ is equal to the right-hand side of \eqref{rn:lower} which is exactly the first line on the right-hand side of \eqref{rn:level}.

\medskip
\textsf{Case 2:} $z = 2$.

Let $w$ and $w'$ be  the weight centers of $T^{2}$. Denote the unique child of $w$ and $w'$ by $w_{0}$ and $w'_{0}$, respectively. Denote the $m_{1}-1$ children of $w_{0}$ by $w_{00},w_{01},\ldots,w_{0(m_{1}-2)}$ and the $m_{1}-1$ children of $w'_{0}$ by $w'_{00},w'_{01},\ldots,w'_{0(m_{1}-2)}$. Inductively, denote the $m_{l-1}$ children of $w_{i_{1}i_{2} \ldots i_{l}}$ and $w'_{i_{1}i_{2} \ldots i_{l}}$ ($0 \leq i_{1} \leq 1, 0 \leq i_{2} \leq m_{1}-2,\ldots,0 \leq i_{l} \leq m_{l-1}-2$) by $w_{i_{1}i_{2} \ldots i_{l}i_{l+1}}$ and $w'_{i_{1}i_{2} \ldots i_{l}i_{l+1}}$, respectively, where $0 \leq i_{l+1} \leq m_{l}-2$. Set $v_{j}$ = $w_{i_{1}i_{2} \ldots i_{l}}$ and $v'_{j} = w'_{i_{1}i_{2} \ldots i_{l}}$, where $j$ is as given in \eqref{eq:j}. Let $u_{0} = v_{(p-2)/2}$, $u_{1} = v'_{1}$, $u_{2} = w$, $u_{3} = v'_{2}$, $u_{4} = v_{1}$, $u_{5} = w'$, $u_{6} = v_{2}$ and $u_{p-1} = v'_{(p-2)/2}$. For $7 \leq j \leq p-2$, let
\begin{equation*}
u_{j} =
\begin{cases}
v_{\frac{j-2}{2}}, & \mbox{if $j$ is even}, \\
v'_{\frac{j-1}{2}}, & \mbox{if $j$ is odd}.
\end{cases}
\end{equation*}
Then $u_{0}$ and $u_{p-1}$ are adjacent to the weight centers and hence $L(u_{0})+L(u_{p-1}) = 2$. Moreover, the four remote vertices are immediately before and after the weight centers and all other remote vertices are ordered in an interval of length even. Hence the linear order $u_0,u_1,\ldots,u_{p-1}$ thus defined is admissible. So condition (a) in Theorem \ref{thm:lower} is satisfied. Note that, for $0 \leq i \leq p-2$, we have $\phi(u_{i},u_{i+1}) = 0$ and $\delta(u_{i},u_{i+1}) = 1$ as $u_{i}$ and $u_{i+1}$ are in opposite branches. The following claim implies that condition (b) in Theorem \ref{thm:lower} is also satisfied.

\medskip
\textsf{Claim 2:} The linear order $u_{0},u_{1},\ldots,u_{p-1}$ defined above satisfies condition \eqref{eqn:duv}.

Consider any pair of vertices $u_{i},u_{j}$ with $0 \leq i < j \leq p-1$. It is easy to verify that condition \eqref{eqn:duv} is satisfied when $0 \leq i < j \leq 7$, or $0 \leq i \leq 7 < j$, or $0 \leq i \leq p-2$ and $j = p-1$. Hence we assume $7 \leq i < j \leq p-2$ in the sequel. Let $L(u_{i}) = l_{i}$ and $L(u_{j}) = l_{j}$. Set $A = a_{i}+a_{i+1}+\ldots+a_{j}$. Then $l_{i} > l_{j}$ as $i < j$. The right-hand side of \eqref{eqn:duv} is equal to $S_{i,j} = \sum_{t=i}^{j-1}\(L(u_{t})+L(u_{t+1})-a_t\)-(j-i)d+(d+1) \leq (j-i)l_{i}+(j-i-1)l_{i}+l_{j}-A-(j-i-1)d+1 =  l_{i}+l_{j}-A-(j-i-1)(d-2l_{i})+1$. If $u_{i}$ and $u_{j}$ are in opposite branches, then $d(u_{i},u_{j}) = l_{i}+l_{j}+1$ and so $S_{i,j} \leq l_{i}+l_{j}+1 = d(u_{i},u_{j})$ as $j-i \geq 1$, $d-2l_{i} \geq 1$ and $A \geq 0$. If $u_{i}$ and $u_{j}$ are in the same branch, then $d(u_{i},u_{j}) = l_{i}+l_{j}-2\phi(u_{i},u_{j})$ with $\phi(u_{i},u_{j}) \geq 1$ and $j-i = 2m$ for some $m \geq 1$. It is clear from the definition of the linear order above that $m \geq \phi(u_{i},u_{j})$. If $u_{i}, u_{j} \in S(T)$, then $S_{i,j} \leq l_{i}+l_{j}-A-(j-i-1)(d-2l_{i})+1 \leq l_{i}+l_{j}-(j-i)-(j-i-1)(d-2l_{i})+1 = l_{i}+l_{j}-(j-i-1)(d-2l_{i}+1) \leq l_{i}+l_{j}-2(j-i-1) \leq l_{i}+l_{j}-(j-i) \leq l_{i}+l_{j}-2m \leq l_{i}+l_{j}-2\phi(u_{i},u_{j}) = d(u_{i},u_{j})$. If $u_{i} \in S(T)$ but $u_{j} \not\in S(T)$, then there exists $k$ with $i \leq k \leq j$ such that $u_{k} \in S(T)$ and $u_{k+1} \not\in S(T)$. Let $L(u_{k}) = l_{k}$ and $A' = a_{i}+a_{i+1}+\ldots+a_{k}$. Then $S_{i,j} = S_{i,k}+S_{k,j}-d+1 \leq l_{i}+l_{k}-A'-(k-i-1)(d-2l_{i})+l_{k}+l_{j}-(j-k-1)(d-2(l_{k}-1))-d+1 \leq l_{i}+l_{k}-(k-i)-(k-i-1)+l_{k}+l_{j}-2(j-k-1)-d+1 \leq l_{i}+l_{k}-(k-i)+l_{k}+l_{j}-(j-k)-(d-1) \leq l_{i}+l_{j}-(j-i)-(d-1-2l_{k}) \leq l_{i}+l_{j}-(j-i) \leq l_{i}+l_{j}-2m \leq l_{i}+l_{j}-2\phi(u_{i},u_{j}) = d(u_{i},u_{j})$. If $u_{i},u_{j} \not\in S(T)$, then $S_{i,j} \leq l_{i}+l_{j}-(j-i-1)(d-2l_{i}) \leq l_{i}+l_{j}-2(j-i-1) \leq l_{i}+l_{j}-(j-i) \leq l_{i}+l_{j}-2m \leq l_{i}+l_{j}-2\phi(u_{i},u_{j}) = d(u_{i},u_{j})$. This completes the proof of Claim 2.

Summing up, we have proved that the linear order $u_0,u_1,\ldots,u_{p-1}$ of the vertices of $T^2$ defined above satisfies conditions (a) and (b) in Theorem \ref{thm:lower}. Hence, by Theorem \ref{thm:lower}, $\rn(T^2)$ is equal to the right-hand side of \eqref{rn:lower} which is exactly the second line on the right-hand side of \eqref{rn:level}.
\end{proof}

The linear order in the proof above determines an optimal radio labelling of $T^z$, $z=1, 2$ through \eqref{f00} and \eqref{f11}. We illustrate this for $T_{2,4,4}^{1}$ and $T_{2,4,4}^{2}$ in Figure \ref{Fig11}.

\begin{figure}[ht]
\begin{center}
\includegraphics[width=5.8 in]{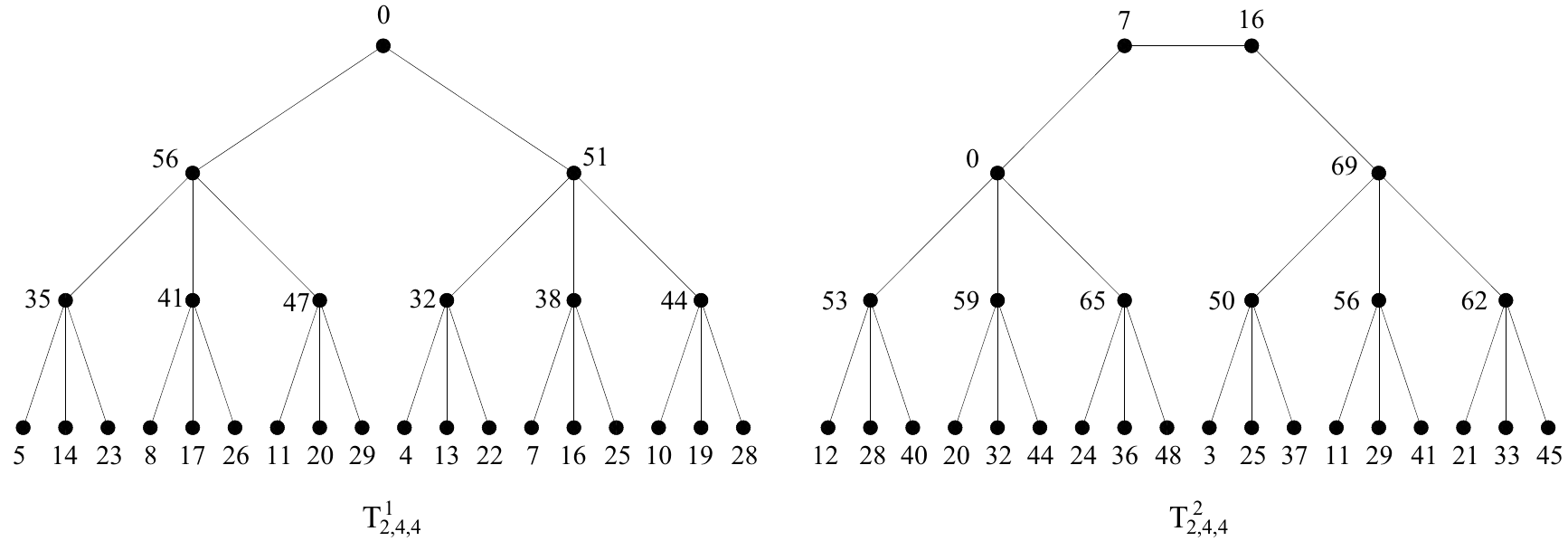}
\caption{Optimal radio labellings of $T_{2,4,4}^{1}$ and $T_{2,4,4}^{2}$ obtained from \eqref{f00} and \eqref{f11} using the linear order in the proof of Theorem \ref{thm:level}.}
\label{Fig11}
\end{center}
\end{figure}

Given integers $m \geq 2$ and $h \geq 1$, the level-wise regular tree $T_{h,m} = T^{1}_{m_{0},m_{1},\ldots,m_{h-1}}$ for which $m_{0} = m$ and $m_{1} = \cdots = m_{h-1} = m+1$ is called the \emph{complete $m$-ary tree of height $h$}. In particular, $T_{h} = T_{h,2} $ is the \emph{complete binary tree of height $h$}. In \cite[Theorem 2]{Li}, Li \emph{et al.} gave a formula for $\rn(T_{h,m})$ for any $h \geq 2$ and $m \geq 3$, and later the authors of the present paper showed that this formula can be obtained from \cite[Theorem 3.2]{Bantva2}. Another main result in \cite{Li} can be obtained from Theorem \ref{thm:level} by setting $z=1$ and $m_{1} = m_{2} = \cdots = m_{h-1} = 3$ in \eqref{rn:level}. We present this result in the following corollary.

\begin{Corollary}
\label{coro:Li}
(\cite[Theorem 1]{Li}) Let $h \ge 2$ be an integer. Then
$$
\rn(T_{h}) = 13 \cdot 2^{h-1}-4h-5.
$$
\end{Corollary}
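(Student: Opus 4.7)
The plan is to derive the corollary as a direct specialization of Theorem \ref{thm:level}. First, I would observe that the complete binary tree $T_h$ of height $h \ge 2$ is precisely $T^{1}_{2,3,3,\ldots,3}$: the root is the unique weight center and has two children (giving $m_0 = 2$), every internal vertex at levels $1, 2, \ldots, h-1$ has one parent and two children (giving $m_i = 3$), and the $2^h$ leaves sit at level $h$. Since $h \ge 2$ and all the $m_i$ for $1 \le i \le h-1$ equal $3 \ge 3$, the hypotheses of Theorem \ref{thm:level} are satisfied.

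Next, I would apply the first line of \eqref{rn:level} with $m_j - 1 = 2$ for each $1 \le j \le h-1$, so that $\prod_{j=1}^{i}(m_j - 1) = 2^i$ and $\prod_{i=1}^{h-1}(m_i - 1) = 2^{h-1}$. This yields
\[
\rn(T_h) \;=\; \sum_{i=1}^{h-1}\bigl(4(h-i)-2\bigr)\,2^{i} \;+\; 2^{h-1} \;+\; 4h-1.
\]

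The remaining work is a routine algebraic simplification. Writing $\bigl(4(h-i)-2\bigr)\,2^i = (4h-2)\,2^i - 4i\,2^i$ and using the standard identities $\sum_{i=1}^{h-1} 2^i = 2^h - 2$ and $\sum_{i=1}^{h-1} i\,2^i = (h-2)\,2^h + 2$, the main sum collapses to $6 \cdot 2^h - 8h - 4$. Adding the remaining terms $2^{h-1} + 4h - 1$ and writing $6 \cdot 2^h = 12 \cdot 2^{h-1}$, one obtains $13 \cdot 2^{h-1} - 4h - 5$, which is the claimed value. I do not anticipate any substantive obstacle: the corollary is an immediate specialization of the already established Theorem \ref{thm:level}, and the only care required is bookkeeping of the two finite sums above.
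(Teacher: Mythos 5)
Your proposal is correct and follows exactly the route the paper takes: identifying $T_h$ with $T^1_{2,3,\ldots,3}$ and substituting $m_1=\cdots=m_{h-1}=3$ into the first line of \eqref{rn:level}. The algebraic simplification (using $\sum_{i=1}^{h-1}2^i=2^h-2$ and $\sum_{i=1}^{h-1}i\,2^i=(h-2)2^h+2$) checks out and yields $13\cdot 2^{h-1}-4h-5$ as claimed.
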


In the case when $m_1 = m+1$ for some integer $m \geq 2$ and $m_0 = m_2 = \cdots = m_{h-1} = 2$, we write $L_{m,h}^{z}$ in place of $T^{z}_{m_{0},m_{1},\ldots,m_{h-1}}$. Recall that Theorem \ref{thm:level} cannot be proved using \cite[Theorem 4]{Tuza}. The next result handles another case which cannot be covered by \cite[Theorem 4]{Tuza}.

\begin{Theorem}\label{thm:ds} Let $m \ge 2$ and $h \geq 2$. Then
\begin{equation}\label{rn:ds}
\rn(L_{m,h}^z) =
\begin{cases}
2mh(h-2)+3m+4h-1, & \mbox{if  } z = 1 \\
2mh(h-2)+4m+6h-2, & \mbox{if  } z = 2.
\end{cases}
\end{equation}
\end{Theorem}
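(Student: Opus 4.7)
The plan is to apply Theorem \ref{thm:lower} and show that its lower bound is attained for $T=L_{m,h}^z$. I would proceed in two stages: a parameter calculation establishing that the right-hand side of \eqref{rn:ds} coincides with the right-hand side of \eqref{rn:lower} for this tree, and then the construction of an explicit linear order witnessing the equality.

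For the parameters, the fact that level $0$ of $T$ has $z$ vertices, level $1$ has $2$ vertices and each of the levels $2,\ldots,h$ has exactly $2(m-1)$ vertices (since $m_1=m$ while all other $m_i=2$) yields directly
\[
p=z+2+2(m-1)(h-1),\quad d=2h+z-1,\quad \ve(T)=2-z,
\]
\[
L(T)=2+(m-1)(h-1)(h+2),\quad |S(T)|=2(m-1),
\]
and hence $\xi(T)=m-1$ when $z=1$ and $\xi(T)=\max\{0,2m-4\}$ when $z=2$. Substituting into \eqref{rn:lower} produces exactly the right-hand side of \eqref{rn:ds} as a lower bound for $\rn(L_{m,h}^z)$.

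For the matching upper bound I would exhibit a linear order $OV(T):u_0,u_1,\ldots,u_{p-1}$ satisfying conditions (a) and (b) of Theorem \ref{thm:lower}, modelled on the orders used in Cases $1$ and $2$ of the proof of Theorem \ref{thm:level}. For $z=1$, start with $u_0=w$ and then, for $k=h,h-1,\ldots,2$ in turn, list the $2(m-1)$ vertices at level $k$ by alternating strictly between the two branches and cycling through the $m-1$ parallel chains within each branch, finishing with the two level-$1$ vertices in positions $p-2,p-1$. For $z=2$, place the two weight centers $w,w'$ inside a short initial block so that each is both preceded and followed by a remote vertex and $L(u_0)+L(u_{p-1})$ has the value demanded by Theorem \ref{thm:lower}, and then continue with the same descending, interleaved pattern. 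By construction any two consecutive vertices of $OV(T)$ lie in different (respectively opposite) branches, so $\phi(u_t,u_{t+1})=0$ for all $t$, the order is feasible (or admissible) in the required sense, and the sequence $a_0,\ldots,a_{p-2}$ determined by \eqref{eq:a0}--\eqref{eq:seq} lies in $\{0,|W(T)|\}$; this establishes condition (a).

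The main obstacle is the verification of condition (b), namely the inequality \eqref{eqn:duv} for every pair $0\le i<j\le p-1$. Writing $S_{i,j}$ for its right-hand side and using $L(u_t)\le L(u_i)$ for $i\le t\le j$, one obtains an estimate of the shape
\[
S_{i,j}\le L(u_i)+L(u_j)-\sum_{t=i}^{j-1}a_t-(j-i-1)\bigl(d+\ve-2L(u_i)\bigr)+(1-\ve),
\]
in direct parallel to Claims $1$ and $2$ in the proof of Theorem \ref{thm:level}. When $j-i\ge 3$ the slack term $-(j-i-1)(d+\ve-2L(u_i))$ is so large that $S_{i,j}\le 1\le d(u_i,u_j)$ follows immediately; when $j-i=1$ the inequality is the defining relation of $f$ via \eqref{f11}. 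The delicate case is $j-i=2$: the interleaving forces $u_i,u_j$ into the same branch, so by Lemma \ref{lem4}(d) we have $d(u_i,u_j)=L(u_i)+L(u_j)-2\phi(u_i,u_j)$, and we need $S_{i,j}\le L(u_i)+L(u_j)-2\phi(u_i,u_j)$. The key geometric observation, which makes the estimate close, is that the construction guarantees $\phi(u_i,u_j)\le 1$ whenever $u_{i+1}$ lies in the opposite branch, since the common ancestor cannot descend below level $1$; a short case split on whether each of $u_i,u_j$ belongs to $S(T)$ then completes the verification. Once \eqref{eqn:duv} is established, Theorem \ref{thm:lower} delivers equality in \eqref{rn:lower}, which is precisely \eqref{rn:ds}.
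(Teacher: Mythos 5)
Your overall strategy (compute $p$, $d$, $\ve(T)$, $L(T)$, $\xi(T)$, invoke Theorem \ref{thm:lower} for the lower bound, then exhibit a linear order satisfying (a) and (b)) is exactly the paper's, but two steps fail. First, the parameter computation does not produce the stated formula. The tree the theorem is about --- the one constructed in the paper's proof and the only one consistent with \eqref{rn:ds} --- has each of the two level-$1$ vertices carrying $m$ pendant paths, so every level $2,\ldots,h$ contains $2m$ vertices, $p=z+2+2m(h-1)$, $L(T)=2+m(h(h+1)-2)$ and $|S(T)|=2m$. You took $2(m-1)$ vertices per level, and with your values the right-hand side of \eqref{rn:lower} for $z=1$ equals $2(m-1)(h-1)^2+m+4h-2$, which is not $2mh(h-2)+3m+4h-1$ (for $m=3$, $h=3$ it gives $29$ versus $38$); the analogous mismatch occurs for $z=2$. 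So the assertion that the substitution ``produces exactly the right-hand side of \eqref{rn:ds}'' is false as written, and the lower-bound half of your argument does not go through.

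Second, the verification of condition (b) rests on two claims that are wrong for this family. The ``key geometric observation'' that $\phi(u_i,u_{i+2})\le 1$ whenever $u_{i+1}$ lies in the opposite branch is false: $\phi(u_i,u_{i+2})$ depends only on $u_i$ and $u_{i+2}$, and two same-branch vertices two positions apart can lie on the same pendant path, giving $\phi$ as large as $h-1$. Likewise ``$j-i\ge 3$ implies $S_{i,j}\le 1$ immediately'' fails here, because for a remote $u_i$ the per-step slack $d+\ve(T)-2L(u_i)$ equals $1$ while there are $2m$ consecutive remote vertices. In fact your level-by-level descending order is genuinely broken, not merely under-justified: with the level-$h$ vertices in positions $1,\ldots,2m$ and the parent of $u_1$ at position $2m+1$, one computes $S_{1,2m+1}=2h-3m$, which exceeds $d(u_1,u_{2m+1})=1$ already for $m=2$, $h=4$, and no cyclic offset of the legs rescues this once $h$ is large relative to $m$. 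This is precisely why the paper's order runs one branch from shallow levels to deep and the other from deep to shallow --- so that every non-remote vertex is interleaved with a vertex of complementary depth and a vertex is far from its parent in the order --- and why its Claims 1 and 2 require the telescoping estimates split according to whether $u_i,u_j$ are remote and whether they lie in the same leg or in distinguishable legs.
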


\begin{proof}
The order, total level and parameter $\xi$ of $L_{m,h}^z$ are given, respectively, by
$$
p =
\begin{cases}
2mh-2m+3, & \mbox{if } z = 1, \\
2mh-2m+4, & \mbox{if } z = 2
\end{cases}
$$
$$
L(L_{m,h}^z) = 2 + m(h(h+1)-2)
$$
and
$$
\xi(L_{m,h}^z) =
\begin{cases}
m, & \mbox{if } z = 1 \\
2m, & \mbox{if } z = 2.
\end{cases}
$$
Plugging these into \eqref{rn:lower}, we obtain that the right-hand side of \eqref{rn:ds} is a lower bound for $\rn(L_{m,h}^z)$. We now prove that this lower bound is the actual value of $\rn(L_{m,h}^z)$. To this end it suffices to find a linear order $u_0,u_1,\ldots,u_{p-1}$ of the vertices of $L_{m,h}^z$ such that conditions (a) and (b) in Theorem \ref{thm:lower} are satisfied. As seen in the proofs of Theorems \ref{thm:cat} and \ref{thm:level}, once such a linear order is found, we then have a corresponding sequence of non-negative integers $a_0, a_1, \ldots, a_{p-2}$ satisfying \eqref{eq:seq}. These are the integers which will be used in our proof of condition \eqref{eqn:duv}.

Let $r$ be the unique weight center of $L_{m,h}^1$, and denote the two children of $r$ by $w^1$ and $w^2$. Let $r_1$ and $r_2$ be the weight centers of $L_{m,h}^2$, and denote the child of $r_1$ and $r_2$ not in $\{r_1,r_2\}$ by $w^1$ and $w^2$, respectively. In either case, denote the $m$ children of $w^{1}$ and $w^{2}$ by $w^{1}_{1,1},w^{1}_{2,1},\ldots,w^{1}_{m,1}$ and $w^{2}_{1,1},w^{2}_{2,1},\ldots,w^{2}_{m,1}$, respectively. Inductively, denote the child of $w^{l}_{i,j}$ ($1 \leq l \leq 2, 1 \leq i \leq m, 1 \leq j \leq h-2$) by $w^{l}_{i,j+1}$. For convenience, for $z=1,2$ and $l = 1, 2$, the path of $L_{m,h}^{z}$ with length $h-2$ induced by a child of $w^l$ and all its descendants is called a leg. Two legs are said to be distinguishable if they use different children of the same $w^l$.

\medskip
\textsf{Case 1:} $z = 1$.

In this case, set $u_0 = r$, $u_{p-2} = w^{1}$ and $u_{p-1} = w^{2}$. For $1 \leq l \leq 2$ and $1 \leq i \leq m$, set $u_t = w^{l}_{i,h-1}$, where
$$
t = 2i+l-2,
$$
which ranges from $1$ to $2m$. For $1 \leq l \leq 2$, $1 \leq i \leq m$ and $1 \leq j \leq h-2$, set $u_t = w^{l}_{i,j}$, where
\begin{equation*}
t =
\begin{cases}
2(i-1)+2mj+l, & \mbox{ if }l=1  \\
2(i-1)+2m(h-j-1)+l, & \mbox{ if }l=2,
\end{cases}
\end{equation*}
which ranges from $2m+1$ to $p-3$. Then $u_{p-1} = w^{2}$ is adjacent to $u_{0} = w$ and hence $L(u_{0})+L(u_{p-1}) = 1$. Moreover, one remote vertex is immediately after the weight center $r$ and all remaining remote vertices are ordered in an interval of even length except possibly one. Hence the linear order $u_{0},u_{1},\ldots,u_{p-1}$ thus defined is admissible. Thus condition (a) in Theorem \ref{thm:lower} is satisfied. Note that, for $0 \leq t \leq p-2$, $u_{t}$ and $u_{t+1}$ are in different branches and hence $\phi(u_t,u_{t+1}) = 0$. The following claim shows that condition (b) in Theorem \ref{thm:lower} is also satisfied.

\medskip
\textsf{Claim 1:} The linear order $u_{0},u_{1},\ldots,u_{p-1}$ defined above satisfies condition \eqref{eqn:duv}.

Consider any two vertices $u_{i},u_{j}$ with $0 \leq i < j \leq p-1$. Let $L(u_{i}) = l_i$ and $L(u_{j}) = l_j$. Denote $A = a_i+a_{i+1}+\ldots+a_j$. Then $l_i > l_j$ as $i < j$. The right-hand side of \eqref{eqn:duv} is equal to $S_{i,j} = \sum_{t=i}^{j-1}\(L(u_{t})+L(u_{t+1})-a_t\)-(j-i-1)(d+1) \leq (j-i)l_{i}+(j-i-1)l_{i}+l_{j}-A-(j-i-1)(d+1) = l_{i}+l_{j}-A-(j-i-1)(d+1-2l_{i})$. If $u_{i}$ and $u_{j}$ are in different branches, then $d(u_{i},u_{j}) = l_{i}+l_{j}$ and so $S_{i,j} \leq l_{i}+l_{j} = d(u_{i},u_{j})$ as $j-i \geq 1$, $d-2l_{i} \geq 0$ and $A \geq 0$. If $u_{i}$ and $u_{j}$ are in the same branch, then $d(u_{i},u_{j}) = l_{i}+l_{j}-2\phi(u_{i},u_{j})$ with $\phi(u_{i},u_{j}) \geq 1$. If $u_{i},u_{j} \in S(T)$, then $\phi(u_{i},u_{j}) = 1$ and $S_{i,j} \leq l_i+l_j-((j-i)/2)-(j-i-1) \leq l_i+l_j-(j-i) \leq l_i+l_j-2 = l_i+l_j-2\phi(u_i,u_j) = d(u_i,u_j)$. If $u_i \in S(T)$ but $u_j \not\in S(T)$, then there exists $k$ with $i \leq k \leq j$ such that $u_{k} \in S(T)$ and $u_{k+1} \not\in S(T)$. Let $L(u_{k}) = l_k$ and $A' = a_i+a_{i+1}+\ldots+a_k$. Then $S_{i,j} = S_{i,k}+S_{k,j}-(d+1) \leq l_i+l_k-A'-(k-i-1)(d+1-2l_i)+\sum_{t=k}^{j-1}(L(u_{t})+L(u_{t+1})-(d+1))+(d+1)-(d+1) = l_i+l_k-((k-i)/2)-(k-i-1)+l_k+l_j+\sum_{t=k+1}^{j-1}(2L(u_{t})-(d+1))-(d+1) = l_i+l_j+(2l_k-(d+1))-3((k-i)/2)+1+\sum_{t=k+1}^{j-1}(2L(u_t)-(d+1)) \leq l_i+l_j-1-(j-k-1)((d-2)/2)$. If $u_i$ and $u_j$ are in distinguishable legs, then $\phi(u_i,u_j) = 1$ and hence $S_{i,j} \leq l_i+l_j-2 = l_i+l_j-2\phi(u_i,u_j) = d(u_i,u_j)$. If $u_i$ and $u_j$ are in the same leg, then $\phi(u_i,u_j) = l_j$ and hence $S_{i,j} \leq l_i+l_j-1-(d-2) \leq l_i-l_j = d(u_i,u_j)$. If $u_i, u_j \not\in S(T)$, then $S_{i,j} \leq l_i+l_j-(j-i-1)(d/2)$. If $u_i$ and $u_j$ are in distinguishable legs, then $\phi(u_i,u_j) = 1$ and hence $S_{i,j} \leq l_i+l_j-2 = l_i+l_j-2\phi(u_i,u_j) = d(u_i,u_j)$. If $u_i$ and $u_j$ are in the same leg, then $\phi(u_i,u_j) = l_j$ and hence $S_{i,j} \leq l_i+l_j-2d \leq l_i-l_j = l_i+l_j-2\phi(u_i,u_j) = d(u_i,u_j)$. This completes the proof of Claim 1.

\medskip
\textsf{Case 2:} $z = 2$.

In this case, set $u_{0}=w^{2}$, $u_{1}=w_{1,h-1}^{1}$, $u_{2}=r_2$, $u_{3}=w_{2,h-1}^{1}$, $u_{4}=w_{1,h-1}^{2}$, $u_{5}=r_1$, $u_{6}=w_{2,h-1}^{2}$ and $u_{p-1}=w^{1}$. For $1 \leq l \leq 2$ and $3 \leq i \leq m$, set $u_{t} = w_{i, h-1}^{l}$, where
$$
t = 2i+l,
$$
which ranges from $7$ to $2m+2$. For $1 \leq l \leq 2$, $1 \leq i \leq m$ and $1 \leq j \leq h-2$, set $u_{t}=w_{i,j}^{l}$, where
\begin{equation*}
t =
\begin{cases}
2i+2mj+l, & \mbox{ if } l=1  \\
2i+2m(h-j-1)+l, & \mbox{ if } l=2,
\end{cases}
\end{equation*}
which ranges from $2m+3$ to $p-2$. Then $u_{0} = w_{1}$ and $u_{p-1} = w^{2}$ are adjacent to the weight centers $r_1$ and $r_2$, respectively. Note that $L(u_0)+L(u_{p-1}) = 2$. Moreover, the four vertices are immediately before and after the weight centers and all other remote vertices are ordered in an interval of even length. Thus the linear order $u_{0},u_{1},\ldots,u_{p-1}$ thus defined is admissible. Thus condition (a) in Theorem \ref{thm:lower} is satisfied. Note that, for $0 \leq t \leq p-1$, $u_{t}$ and $u_{t+1}$ are in opposite branches and hence $\phi(u_{t},u_{t+1}) = 0$ and $\delta(u_{t},u_{t+1}) = 1$. The following claim says that condition (b) in Theorem \ref{thm:lower} is also satisfied.

\medskip
\textsf{Claim 2:} The linear order $u_{0},u_{1},\ldots,u_{p-1}$ defined above satisfies condition \eqref{eqn:duv}.

Consider any two vertices $u_i,u_j$ with $0 \leq i < j \leq p-1$. It is easy to verify that condition \eqref{eqn:duv} is satisfied for $0 \leq i \leq 6$ and $j = p-1$. Hence we assume $7 \leq i < j \leq p-2$ in the sequel. Let $L(u_i) = l_i$ and $L(u_j) = l_j$. Denote $A = a_i+a_{i+1}+\ldots+a_j$. Then $l_i > l_j$ as $i < j$. The right-hand side of \eqref{eqn:duv} is equal to $S_{i,j} = \sum_{t=i}^{j-1}(L(u_t)+L(u_{t+1})-a_t)-(j-i)d+(d+1) \leq (j-i)l_i+(j-i-1)l_i+l_j-A-(j-i-1)d+1 = l_i+l_j-A-(j-i-1)(d-2l_i)+1$. If $u_i$ and $u_j$ are in opposite branches, then $d(u_i,u_j) = l_i+l_j+1$ and hence $S_{i,j} \leq l_i+l_j+1 = d(u_i,u_j)$ as $j-i \geq 1, d-2l_i \geq 1$ and $A \geq 0$. If $u_i$ and $u_j$ are in the same branch, then $d(u_i,u_j) = l_i+l_j-2\phi(u_i,u_j)$ with $\phi(u_i,u_j) \geq 1$. If $u_i,u_j \in S(T)$, then $\phi(u_i,u_j) = 1$ and hence $S_{i,j} \leq l_i+l_j-A-(j-i-1)(d-2l_i)+1 \leq l_i+l_j-(j-i)-(j-i-1)(d-2l_i)+1 \leq l_i+l_j-(j-i-1)-(j-i-1)(d-2l_i) \leq l_i+l_j-(j-i-1) \leq l_i+l_j- 2 = l_i+l_j-2\phi(u_i,u_j) = d(u_i,u_j)$. If $u_i \in S(T)$ but $u_j \not\in S(T)$, then there exists $k$ with $i \leq k < j$ such that $u_k \in S(T)$ but $u_{k+1} \not\in S(T)$. Let $L(u_k) = l_k$ and $A' = a_i+a_{i+1}+\ldots+a_k$. Then $S_{i,j} = S_{i,k}+S_{k,j}-d+1 \leq l_i+l_k-A'-(k-i-1)(d-2l_i)+1+\sum_{t=i}^{j-1}(L(u_t)+L(u_{t+1})-d)+d+1-d+1 = l_i+l_j-A'-(k-i-1)(d-2l_i)+3+ \sum_{t=k+1}^{j-1}(2L(u_t)-d)+(2l_k-d) \leq l_i+l_j-(k-i)-(k-i-1)+3+\sum_{t=k+1}^{j-1}(2L(u_{t})-d)$. If $u_i$ and $u_j$ are in distinguishable legs, then $\phi(u_i,u_j) = 1$ and hence $S_{i,j} \leq l_i+l_j-2(k-i-1)+2+\sum_{t=k+1}^{j-1}(2L(u_{t})-d) \leq l_i+l_j-2+2-2 = l_i+l_j-2 = l_i+l_j-2\phi(u_i,u_j) = d(u_i,u_j)$. If $u_i$ and $u_j$ are in the same leg, then $\phi(u_i,u_j) = l_j$ and hence $S_{i,j} \leq l_i+l_j-2(k-i)-(d-1) \leq l_i+l_j-2l_j = l_i-l_j = l_i+l_j-2\phi(u_i,u_j) = d(u_i,u_j)$. If $u_i,u_j \not\in S(T)$, then $S_{i,j} = \sum_{t=i}^{j-1}(L(u_t)+L(u_{t+1})-d)+d+1 = l_i+l_j+\sum_{t=i+1}^{j-1}(2L(u_{t})-d)-d+d+1 = l_i+l_j+\sum_{t=i+1}^{j-1}(2L(u_t)-d)+1$. If $u_i$ and $u_j$ are in distinguishable legs, then $\phi(u_i,u_j) = 1$ and hence $S_{i,j} \leq l_i+l_j-3+1 = l_i+l_j-2 = l_i+l_j-2\phi(u_i,u_j) = d(u_i,u_j)$. If $u_i$ and $u_j$ are in the same leg, then $\phi(u_i,u_j) = l_j$ and hence $S_{i,j} \leq l_i+l_j-(j-i-1)((d-1)/2)+1 \leq l_i+l_j-(d-1)+1 = l_i+l_j-(d-2) \leq l_i+l_j-2l_i = l_i+l_j-2\phi(u_i,u_j) = d(u_i,u_j)$. This completes the proof of Claim 2.

In summary, in each case above we have given a linear order of the vertices of $L^{z}_{m,h}$ which satisfies (a) and (b) in Theorem \ref{thm:lower}. Hence, by Theorem \ref{thm:lower}, $\rn(L^{z}_{m,h})$ is equal to the right-hand side of \eqref{rn:lower} which yields exactly the right-hand side of \eqref{rn:ds} in this special case.
\end{proof}

Using the linear order in the proof above, we can obtain an optimal radio labelling of $L^{z}_{m,h}$ from \eqref{f00} and \eqref{f11}. We illustrate this for $L_{3,3}^1$ and $L_{3,3}^2$ in Figure \ref{Fig22}.

\begin{figure}[ht]
\begin{center}
\includegraphics[width=5 in]{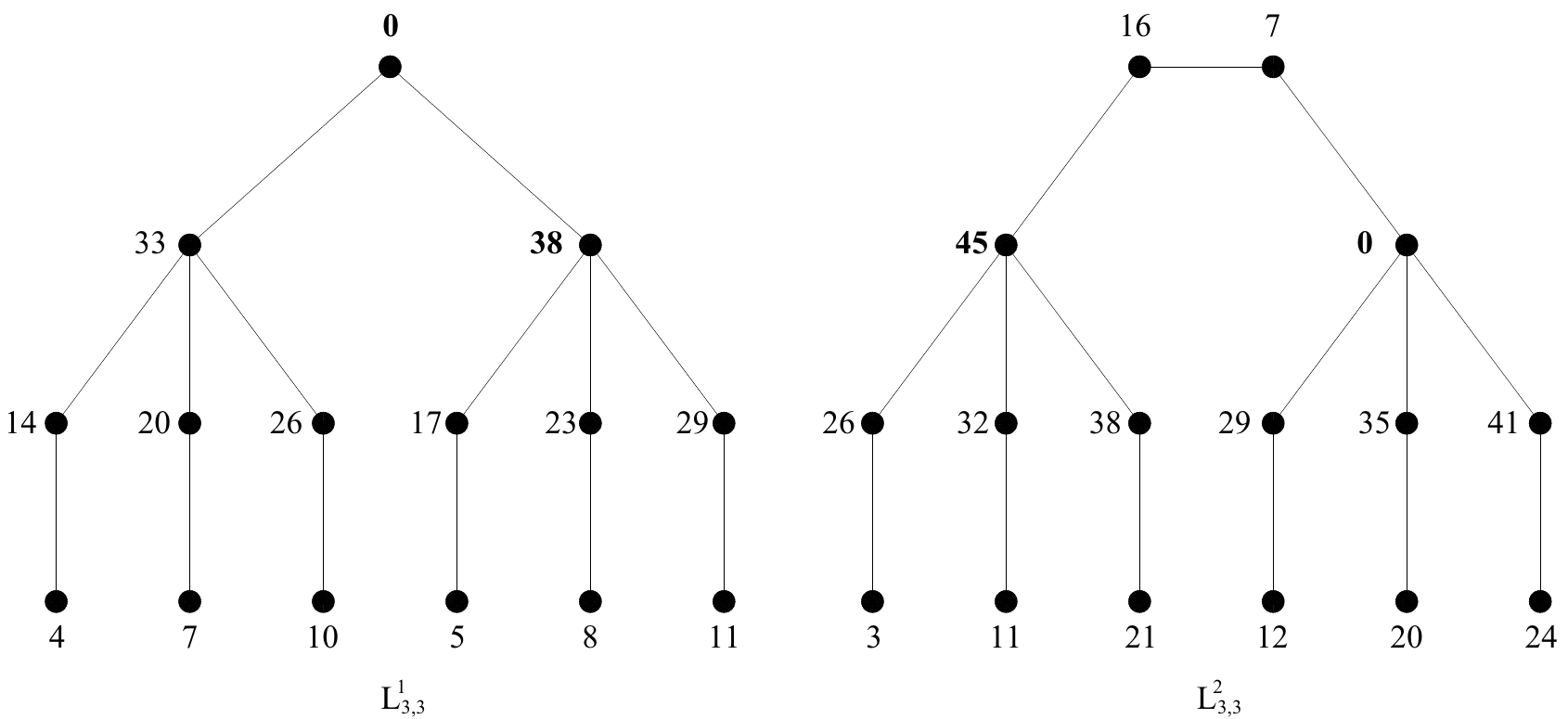}
\caption{Optimal radio labellings of $L_{3,3}^1$ and $L_{3,3}^2$ obtained from \eqref{f00} and \eqref{f11} using the linear order in the proof of Theorem \ref{thm:ds}.}
\label{Fig22}
\end{center}
\end{figure}

\section{Concluding remarks}
\label{sec:rem}

We conclude this paper with five remarks on relations between our results and two questions and three results in \cite{Daphne2}.

\begin{Remark}
A tree $T$ is called \cite{Daphne2} a \emph{lower bound tree} if $\rn(T)$ is equal to the lower bound \eqref{eqn:lb} and a \emph{non-lower bound tree} otherwise. In \cite[Question 2]{Daphne2}, Liu \textit{et al.} asked which trees with a weight center of degree two are non-lower bound trees, and moreover for a non-lower bound tree how much improvement one can make over the lower bound \eqref{eqn:lb}. Theorem \ref{thm:nlb} answers the first part of this question for a subfamily of two-branch trees, whilst Theorem \ref{thm:lower} gives an answer to the second part of this question for two-branch trees with diameter at least two together with a necessary and sufficient condition for this improved lower bound to be tight.
\end{Remark}

\begin{Remark}
In \cite[Question 1]{Daphne2}, Liu \textit{et al.} asked whether the level-wise regular trees $T^1_{m_0,m_1,\ldots,m_{h-1}}$ with $m_0=2$ and $h \geq 2$ are always non-lower bound trees, and they affirmatively answered this question in \cite[Theorem 12]{Daphne2}. It happens that this result can also be obtained from Theorem \ref{thm:nlb}. Moreover, by Theorem \ref{thm:nlb}, we also obtain that all trees $T^2_{m_0,m_1,\ldots,m_{h-1}}$ with $m_0 = 2$ and $h \geq 2$ except paths $P_{2h}$ are non-lower bound trees. Furthermore, in Theorems \ref{thm:level} and \ref{thm:ds} we give the exact value of $\rn(T^z_{m_0,m_1,\ldots,m_{h-1}})$ for $z=1,2$ when $m_0 = 2$ and $m_i \geq 3$ for $1 \leq i \leq h-1$, or $m_0 = m_2 = \cdots = m_{h-1} = 2$ and $m_1 \geq 2$, with the help of our improved lower bound in Theorem \ref{thm:lower}.
\end{Remark}

\begin{Remark}
\label{rem:comp1}
In \cite[Theorem 12]{Daphne2}, Liu, Saha and Das proved that for a single-root level-wise regular tree $T^1 = T^{1}_{2, m_1, \ldots, m_{h-1}}$ with $m_i \ge 2$ for $1 \le i \le h-1$ and $\prod_{i=1}^{k-1} (m_i - 1) \geq \frac{2k+1}{3}$ for $2 \le k \le h-1$, $\rn(T^{1}_{2, m_1, \ldots, m_{h-1}})$ attains the lower bound in \cite[Theorem 10]{Daphne2}, namely $\rn(T^1) = (2h+1)(p-1) - 2L(T^1) + 1 + |R_h(T^1)| = (p-1) (d+1) - 2L(T^1) + 1 + \prod_{i=1}^{h-1}(m_i-1)$, where $|R_h(T^1)|$ agrees with our $\xi(T^1) = \prod_{i=1}^{h-1}(m_i-1)$, $p = 3+2\sum_{i=1}^{h-1}\(\prod_{j=1}^{i}(m_j-1)\)$ is the order of $T^1$, and $d = 2h$ is the diameter of $T^1$. Since $\ve(T^1)=1$ and $L(T^1) = 2+2\sum_{i=1}^{h-1}(i+1)\(\prod_{j=1}^{i}(m_j-1)\)$, this result is identical to the first line in \eqref{rn:level} when $m_i \geq 3$. (Note that $T^{1}_{m_0, m_1, \ldots, m_{h-1}}$ in the present paper is the same as $T_{m_0, m_1-1, \ldots, m_{h-1}-1}$ but not $T_{m_0, m_1, \ldots, m_{h-1}}$ in \cite{Daphne2}.) That is, our result in Theorem \ref{thm:level} about $T^{1}_{2, m_1, \ldots, m_{h-1}}$ is the same as \cite[Theorem 12]{Daphne2}. However, double-root level-wise regular trees were not discussed in \cite{Daphne2} and our result in Theorem \ref{thm:level} about $T^{2}_{2, m_1, \ldots, m_{h-1}}$ (the second line in \eqref{rn:level}) is not covered by any result in \cite{Daphne2}. Moreover, $L^1_{m,h}$ ($= T^{1}_{2, 2, \ldots, 2}$) with $m \geq 2$ and $h \geq 2$ does not satisfy the above-mentioned condition in \cite[Theorem 12]{Daphne2}. Thus, Theorem \ref{thm:ds} covers a case which is not covered by \cite[Theorem 12]{Daphne2}, and this is a new case which attains the lower bound in \cite[Theorem 10]{Daphne2}.
\end{Remark}

In \cite{Daphne2}, Liu \textit{et al.} defined $\Omega^h_k$ to be the family of trees with height $h$, diameter $k$ and a weight center of degree 2. A tree in this family may have one weight center or two weight centers, and a tree with one weight center belongs to $\Omega^h_k$ if and only if it is a two-branch tree with height $h$ and diameter $k$. A two-branch tree with two weight centers belongs to $\Omega^h_k$ for some $h$ and $k$, but not every tree in $\Omega^h_k$ with two weight centers is a two-branch tree. Let $T \in \Omega^h_k$ and let $x \in W(T)$ be a weight center of $T$ with degree 2. Then $T - x$ consists of two branches, say, $L_T$ and $R_T$. Denote by $L_i, R_i$ the sets of vertices at distance $i$ from $x$ in $L_T, R_T$, respectively, for $1 \leq i \leq h$. The following two remarks show that the lower bounds on $\rn(T)$ proved by Liu \textit{et al.} in  \cite{Daphne2} can be stronger than, identical to, or weaker than our lower bound in \eqref{rn:lower} in different cases.

\begin{Remark}
In \cite[Theorem 10]{Daphne2}, Liu \textit{et al.} proved the following bounds for any $T \in \Omega^h_{2d}$ with order $p$ and a weight center $x$ of degree $2$: if $L_d \neq \emptyset$ and $R_d \neq \emptyset$, then
\begin{equation}
\label{eqn:Thm10a}
    \rn(T) \geq
    \begin{cases}
    (p-1)(2d+1)-2w_T(x)+\max\{|L_d|,|R_d|\}, & \mbox{ if } |L_d| \neq |R_d| \\
    (p-1)(2d+1)-2w_T(x)+1+|R_d|, & \mbox{ if } |L_d| = |R_d|;
    \end{cases}
\end{equation}
if $L_d = \emptyset$, then
  \begin{equation}
  \label{eqn:Thm10b}
    \rn(T) \geq (p-1)(2d+1)-2w_T(x)+\max\left\{\frac{1}{2}\left(\sum_{i=0}^{h-d}(2i+1)|R_{d+i}|-2\right),1\right\}.
  \end{equation}

\medskip
\textsf{Case 1:} $|W(T)| = 1$ (that is, $W(T) = \{x\}$).

In this case, we have $\ve(T)=1$, $w_T(x) = L(T)$ and $\xi(T) = \lfloor |S(T)|/2 \rfloor$. We may assume without loss of generality that $|R_d| \geq |L_d|$. Assume $L_d \neq \emptyset$ and $R_d \neq \emptyset$ first. Then $S(T) = L_d \cup R_d$ and $\xi(T) = \lfloor (|L_d|+|R_d|)/2 \rfloor$. If $|L_d| = |R_d|$, then $\xi(T) = |R_d|$ and hence the lower bounds in \eqref{rn:lower} and in the second line of \eqref{eqn:Thm10a} are identical. If $|R_d| = |L_d|+1$, then $\xi(T) = |R_d|-1$ and so the lower bounds in \eqref{rn:lower} and in the first line of \eqref{eqn:Thm10a} are identical. If $|R_d| \geq |L_d|+2$, then $\xi(T) < \max\{|L_d|, |R_d|\}$ and hence the lower bound in \eqref{rn:lower} is weaker than the one in the first line of \eqref{eqn:Thm10a}.

Now assume $L_d = \emptyset$. Then $h-d \geq 1$ and $S(T) = R_d \cup R_{d+1}$. If $h-d=1$ and $|R_{d+1}|=1$, then $\xi(T) = \lfloor (|R_d|+1)/2 \rfloor = \max\{(\sum_{i=0}^{h-d}(2i+1)|R_{d+i}|-2)/2,1\}$ and hence the lower bounds in \eqref{rn:lower} and \eqref{eqn:Thm10b} are identical. If $h-d > 1$, or $h-d = 1$ and $|R_{d+1}| > 1$, then $\xi(T) = \lfloor (\sum_{i=0}^{h-d}|R_{d+i}|)/2 \rfloor < \max\{(\sum_{i=0}^{h-d}(2i+1)|R_{d+i}|-2)/2,1\}$ and hence the lower bound in \eqref{rn:lower} is weaker than the one in \eqref{eqn:Thm10b}.

\medskip
\textsf{Case 2:} $|W(T)| = 2$ (that is, $W(T)$ consists of $x$ and another weight center).

In this case, we have $\ve(T)=0$, $w_T(x) = L(T) + (n/2)$ and  $\xi(T) = \max\{0,|S(T)|-2\}$. Assume without loss of generality that the weight center of $T$ other than $x$ is in $L_T$. Assume $L_d \neq \emptyset$ and $R_d \neq \emptyset$ first. If $|L_d| = |R_d|$, then $S(T) = R_d$ and $\xi(T) = \max\{0,|R_d|-2\} < |R_d|$; if $|L_d| \neq |R_d|$, then $1+\xi(T) = 1+\max\{0,|R_d|-2\} < \max\{|L_d|,|R_d|\}$. In either case the lower bound in \eqref{rn:lower} is weaker than the one in \eqref{eqn:Thm10a}.

Now assume $L_d = \emptyset$. Then $S(T) = \cup_{i=0}^{h-d} R_{d+i}$ and $1+\xi(T) = 1+\max\{0,\sum_{i=0}^{h-d}|R_{d+i}|-2\}$. Thus, the lower bound in \eqref{rn:lower} is stronger than, identical to, or weaker than the lower bound in \eqref{eqn:Thm10b} depending on whether $1+\max\{0,\sum_{i=0}^{h-d}|R_{d+i}|-2\}$ is greater than, equal to, or smaller than $\max\{(\sum_{i=0}^{h-d}(2i+1)|R_{d+i}|-2)/2,1\}$, respectively.

In all possibilities above, Theorem \ref{thm:lower} gives a necessary and sufficient condition for $T$ to achieve the lower bound in \eqref{rn:lower}, while no similar condition for the sharpness of \eqref{eqn:Thm10a} or \eqref{eqn:Thm10b} was given in \cite[Theorems 10]{Daphne2}.
\end{Remark}

One can see that for any $T \in \Omega^h_{2d+1}$ exactly one of the branches $L_T$ and $R_T$ contains vertices at level $d+i$ for $1 \le i \le h-d$. Without loss of generality we may assume that this branch is $R_T$.

\begin{Remark}
In \cite[Theorem 15]{Daphne2}, Liu \textit{et al.} proved that, for any $T \in \Omega^h_{2d+1}$ (where $d \geq 2$) with order $p$ and a weight center $x$ of degree 2,
\begin{equation}\label{eqn:Thm15}
\rn(T) \geq
\begin{cases}
  (p-1)(2d+2)-2w_T(x)+\max\{2|R_{d+1}|-5,1\}, & \mbox{if } h = d+1 \\
  (p-1)(2d+2)-2w_T(x)+\sum_{i=1}^{h-d}(i+1)|R_{d+i}|-2, & \mbox{if } h > d+1.
\end{cases}
\end{equation}

\medskip
\textsf{Case 1:} $|W(T)| = 1$ (that is, $W(T) = \{x\}$).

In this case, we have $\ve(T) = 1$, $w_T(x) = L(T)$ and $\xi(T) = \lfloor |S(T)|/2 \rfloor$. Assume $h = d+1$ first. Then $S(T) = R_{d+1}$ and $1+\xi(T) = 1+\lfloor |R_{d+1}|/2 \rfloor$. Thus, if $|R_{d+1}| = 2,3$, then $1+\xi(T) > \max\{2|R_{d+1}|-5,1\}$; if $|R_{d+1}| = 1, 4$, then $1+\xi(T) = \max\{2|R_{d+1}|-5,1\}$; if $|R_{d+1}| \geq 5$, then $1+\xi(T) < \max\{2|R_{d+1}|-5,1\}$. In these three possibilities the lower bound in \eqref{rn:lower} is stronger than, identical to, or weaker than the lower bound in \eqref{eqn:Thm15}, respectively.

Now assume $h > d+1$. Then $S(T) = \cup_{i=1}^{h-d}R_{d+i}$ and hence $1+\xi(T) = 1+\lfloor (\sum_{i=1}^{h-d}|R_{d+i}|)/2 \rfloor < \sum_{i=1}^{h-d}(i+1)|R_{d+i}|-2$. So the lower bound in \eqref{rn:lower} is weaker than the one in \eqref{eqn:Thm15}.

\medskip
\textsf{Case 2:} $|W(T)| = 2$ (that is, $W(T)$ consists of $x$ and another weight center).

In this case, we have $\ve(T) = 0$, $w_T(x) = L(T) + (n/2)$ and $\xi(T) = \max\{0,|S(T)|-2\}$. Assume $h=d+1$ first. Assume without loss of generality that the weight center of $T$ other than $x$ is in $R_T$. Then $S(T) = L_d \cup R_{d+1}$ with $|L_d| \neq \emptyset$ and $|R_{d+1}| \neq \emptyset$. If $|L_d| = |R_{d+1}|$, then $1+\xi(T) = 1+\max\{0, |L_d|+|R_{d+1}|-2\} = 2|R_{d+1}|-1$, and hence the lower bound in \eqref{rn:lower} is stronger than or identical to the lower bound in the first line of \eqref{eqn:Thm15} according to whether $|R_{d+1}| \ge 2$ or $|R_{d+1}|=1$, respectively. Similarly, the lower bound in \eqref{rn:lower} is stronger than, identical to, or weaker than the lower bound in the first line of \eqref{eqn:Thm15} according to whether $|R_{d+1}|-|L_d| \leq 5$, $|R_{d+1}| = |L_d|+6$, or $|R_{d+1}| > |L_d|+6$, respectively.

Now assume $h > d+1$. Then $S(T) = \cup_{i=1}^{h-d} R_{d+i}$ and $\xi(T) = \sum_{i=1}^{h-d}|R_{d+i}| - 2 < \sum_{i=1}^{h-d}(i+1)|R_{d+i}|-3$, and hence the lower bound in \eqref{rn:lower} is weaker than the one in the second line of \eqref{eqn:Thm15}.

In all possibilities above, Theorem \ref{thm:lower} has the benefit that it gives a necessary and sufficient condition for $T$ to achieve the lower bound \eqref{rn:lower}, while no similar condition for the sharpness of \eqref{eqn:Thm15} was given in \cite[Theorems 15]{Daphne2}.

As an example to show that \eqref{rn:lower} can outperform \eqref{eqn:Thm15} sometimes, let us consider $T = C(n,k)$ with $n > 4$ even and $k \geq 2$ (see Theorem \ref{thm:cat}). For this family of trees, we have $|W(T)| = 2$, $|L_d| = |R_{d+1}| \ge 2$ and $1+\xi(T) = 2k-1 = 2|R_{d+1}|-1 > \max\{2|R_{d+1}|-5,1\}$, and hence the lower bound in \eqref{rn:lower} (which is tight and is equal to the fourth line in \eqref{rn:cat}) is bigger than the lower bound in the first line of \eqref{eqn:Thm15} by $4$.
\end{Remark}

\section*{Acknowledgment} 
The authors are grateful to the two anonymous referees for their careful reading of the manuscript and helpful comments.

\end{document}